\def\Xint#1{\mathchoice
    {\XXint\displaystyle\textstyle{#1}}%
    {\XXint\textstyle\scriptstyle{#1}}%
    {\XXint\scriptstyle\scriptscriptstyle{#1}}%
    {\XXint\scriptscriptstyle\scriptscriptstyle{#1}}%
    \!\int}
    \def\XXint#1#2#3{{\setbox0=\hbox{$#1{#2#3}{\int}$}
    \vcenter{\hbox{$#2#3$}}\kern-.5\wd0}}
    \def\fint{\Xint-}
\newcommand{\pip}{\varphi}
\newcommand{\eps}{{\varepsilon}}
\newcommand{\supp}{{\rm supp}\, }
\theoremstyle{plain}
\newtheorem{thm}{Theorem}[section]
\newtheorem{theorem}{Theorem}[section]
\newtheorem{lem}[thm]{Lemma}
\newtheorem{cor}[thm]{Corollary}
\newtheorem{prop}[thm]{Proposition}
\theoremstyle{definition}
\newtheorem{defn}[thm]{Definition}
\newtheorem{assump}[thm]{Assumption}
\theoremstyle{remark}
\newtheorem{remark}[thm]{Remark}
\newcommand{\bremark}{\begin{remark} \em}
\newcommand{\eremark}{\end{remark} }
\begin{document}


\title{Korevaar-Schoen $p$-energies and their $\Gamma$-limits on Cheeger spaces}

\author{Patricia Alonso Ruiz\footnote{Partly supported by the NSF grant DMS~2140664}, Fabrice Baudoin\footnote{Partly supported by the NSF grant  DMS~2247117.}}

\maketitle

\begin{abstract}
The paper studies properties of $\Gamma$-limits of Korevaar-Schoen $p$-energies on a Cheeger space. When $p>1$, this kind of limit provides a natural $p$-energy form that can be used to define a $p$-Laplacian, and whose domain is the Newtonian Sobolev space $N^{1,p}$. When $p=1$, the limit can be interpreted as a total variation functional whose domain is the space of BV functions. When the underlying space is compact, the $\Gamma$-convergence of the $p$-energies is improved to Mosco convergence for every $p \ge 1$.
\end{abstract}

\bigskip

\textbf{MSC classification: }30L99; 31C45; 46E36; 47H99.

\medskip

\textbf{Keywords: } p-energy form, Korevaar-Schoen energy, $\Gamma$-convergence, Mosco convergence, Cheeger space, Poincar\'e inequality, p-Laplacian.

\tableofcontents
\newpage

\section{Introduction}

In their seminal paper~\cite{KS93}, Korevaar and Schoen developed a general theory of Sobolev spaces and harmonic maps between Riemannian manifolds to treat variational problems in that setting. Those Sobolev spaces $W^{1,p}$ were constructed in such a way that the \emph{p-energy} form
\begin{equation}\label{E:p-energy_KS}
E_p(f):=\sup_{\substack{g\in C_c(X)\\ 0\leq g \leq 1}}\limsup_{\varepsilon\to 0}\frac{1}{\varepsilon^p}\int_X \int_{B(x,\varepsilon)}g(x)\frac{d_g(f(x),f(y))^p}{\varepsilon^{n-1}}\,d{\rm vol}_g(x)
\end{equation}
would act as the natural seminorm of $W^{1,p}$. Above, $d_g$ and ${\rm vol}_g$ denote the Riemannian distance and volume, while $n$ is the dimension of the manifold. 
In principle, the expression~\eqref{E:p-energy_KS} makes sense in an arbitrary metric measure space $(X,d,\mu)$ and it is thus natural to study it beyond the Riemannian setting. In recent years, the body of literature exploring $(1,p)$-Sobolev spaces and $p$-energy forms in the context of abstract metric measure spaces has started to grow significantly; we refer to~\cite{ambrosiosurvey,ARB23,SW04,HKST15,Kig23,Bau22,Sha00} and references therein for an overview of available results. 

\medskip

%

How do $p$-energies arise in a natural way in a general metric measure space ? Along the lines of previous work by the authors in the case $p=2$, c.f.~\cite{ARB23}, one of the main results of the paper, Theorem~\ref{T:KS_as_Gamma}, provides the existence of a $p$-energy form $\mathcal{E}_p$ in $L^p(X,\mu)$ as the $\Gamma$-limit of a sequence of approximating $p$-energy functionals
\begin{equation}\label{E:def_Eprn_intro}
E_{p,r_n}(f):= \frac{1}{r_n^{p}}\int_X\fint_{B(x,r_n)} |f(y)-f(x)|^p d\mu(y) d\mu(x), \qquad f\in L^p(X,\mu),
\end{equation}
where $r_n\to 0$. Introduced by de Giorgi and Franzoni in~\cite{dGF75}, $\Gamma$-convergence was designed to study variational problems and it guarantees that a minimizer of the limiting functional $\mathcal{E}_p$ is the limit of a sequence of minimizers of each $E_{p,r_n}$. In addition, Theorem~\ref{T:KS_as_Gamma} shows that in the framework of a Cheeger space, the domain of the $p$-energy $\mathcal{E}_p$ is the Korevaar-Schoen type $(1,p)$-Sobolev space $KS^{1,p}(X)$. 
A Cheeger space is a doubling metric measure space that satisfies a $(p,p)$-Poincar\'e inequality for Lipschitz functions, c.f. Section~\ref{SS:Cheeger_spaces}. A forthcoming paper will handle Dirichlet spaces with sub-Gaussian heat kernel estimates that include among others Sierpinski carpets and gaskets, for which $p$-energies have been constructed via finite graph approximations in~\cite{HPS04,Kig23,MS23}.

\medskip

In the context of non-linear functionals such as Korevaar-Schoen $p$-energies, $\Gamma$-convergence is the typical convergence mode to consider. Motivated by the results in~\cite{ARB23}, the present paper also investigates the concept of Mosco convergence for sequences of functionals like $\{E_{p,r_n}\}_{n\geq 0}$ when the underlying space is compact, see Section~\ref{Mosco section}. The latter convergence is an extension of the original Mosco convergence for Dirichlet forms ($2$-energies), see~\cite{Mos94}. 


\medskip

Another aspect of $2$-energies (Dirichlet forms) that translates to $p$-energies is their correspondence with a family of Radon measures $\{\Gamma_p(f)\colon\,f\in KS^{1,p}(X)\}$ on the underlying space $X$. These measures have the property that the total $p$-energy $\mathcal{E}_p(f)$ coincides with the total mass of the space $\Gamma_p(f)(X)$. The technique used to construct these measures relies on a \emph{localization method} from the theory of $\Gamma$-convergence. The main idea is to consider first the functionals 
\[
E_{p,r_n}(f,U):= \frac{1}{r_n^{p}}\int_U\fint_{B(x,r_n)} |f(y)-f(x)|^p d\mu(y) d\mu(x), \qquad f\in L^p(X,\mu),\; U\subset X\text{ open,}
\]
that are localized versions of~\eqref{E:def_Eprn_intro}, and second to prove that their $\overline{\Gamma}$-limit is indeed a Radon measure $\Gamma_p(f)$,  which is the $p$-energy measure associated with $f$ and satisfies $\Gamma_p(f)=\mathcal{E}_p(f)$. A precise definition of $\overline{\Gamma}$-convergence is provided in Section~\ref{SS:Gamma_convergence}. The authors believe that the approach presents a novel application of the localization method in the context of $p$-energy measures on Cheeger spaces.
Desirable properties of the $p$-energy measure that carry over from the case $p=2$ include its absolute continuity with respect to the underlying measure and the possibility to obtain a $(p,p)$-Poincar\'e inequality with respect to it. 

\medskip

The paper is organized as follows: Section~\ref{S:MMSintro} briefly describes the main assumptions and concepts from $\Gamma$-convergence along with observations that will be applied repeatedly. The existence of $p$-energy forms as $\Gamma$-limits of the Korevaar-Schoen energies~\eqref{E:def_Eprn_intro} is proved in Section~\ref{S:KS_pforms} as well as some of their properties and a discussion of the associated $p$-Laplacian. The construction of the corresponding $p$-energies appears in Section~\ref{S:KS_pmeasures}, where also a $(p,p)$-Poincar\'e inequality is obtained and the absolute continuity of the measures for $p>1$ is proved. In the case $p=1$, the energy measures arising as $\bar{\Gamma}$-limits are shown to be uniformly comparable to the BV measures constructed by Miranda in~\cite{miranda}. Finally, Section~\ref{Mosco section} focuses on the case when the underlying space is compact. Here a Rellich-Kondrachov theorem and the stronger Mosco convergence of the Korevaar-Schoen energies~\eqref{E:def_Eprn_intro} are presented.

\section{Definitions and setup}\label{S:MMSintro}
The assumptions on the underlying space that we make throughout the paper correspond to what is often referred to as a \emph{Cheeger space} after the influential work of Cheeger in~\cite{Che99}. In this setup we investigate energy functionals that arise as $\Gamma$- and $\overline{\Gamma}$-limits of suitable sequences. These convergence types are reviewed in section~\ref{SS:Gamma_convergence}.

\subsection{Cheeger spaces}\label{SS:Cheeger_spaces}
Let $(X,d,\mu)$ denote a locally compact, complete, metric measure space, where $\mu$ is a Radon measure. Any open metric ball centered at $x\in X$ with radius $r>0$ is denoted by
\[
B(x,r):= \{ y \in X, d(x,y)<r \}.
\] 
When convenient, for a ball $B:=B(x,r)$ and $\lambda>0$, the ball $B(x,\lambda r)$ will be denoted by $\lambda B$.

\bigskip

Note that, in this setup, closed balls are compact and the space is separable. Thus, any open set can be expressed as a countable union of balls with rational radii which makes the space $\sigma$-finite. 
That property will play a role in proving Theorem~\ref{T:Gamma_measure}. 

\begin{assump}
The measure $\mu$ is doubling and positive in the sense that there exists a constant $C>0$ such that for every $x \in X, r>0$,
\begin{equation}\label{A:VD}
0< \mu (B(x,2r)) \le C \mu(B(x,r)) <\infty.\tag{$\rm VD$}
\end{equation}
\end{assump}

From the doubling property of $\mu$ it follows that there exist constants $C > 0$ and $0<Q\ <\infty$ such that
\begin{equation}\label{eq:mass-bounds}
 \frac{\mu(B(x,R))}{\mu(B(x,r))}
 \le C\left(\frac{R}{r}\right)^Q
\end{equation}
for any $0<r\le R$ and $x\in X$, see e.g.~\cite[Lemma 8.1.13]{HKST15}. 

\medskip

\begin{remark}\label{R:cover_partition}
Another useful consequence of the doubling property is the availability of maximally separated $\eps$-coverings with the bounded overlap property and subordinated Lipschitz partitions of unity, see~\cite[pp. 102-104]{HKST15}.  This means that for every $\lambda \ge 1$, there exists a constant $C>0$ such that for every $\varepsilon >0$, one can find a covering of $X$ by a family of balls $\{B_i^\eps:=B(x_i,\eps)\}_{i\geq 1}$ so that the family $\{B_i^{\lambda \eps}\}_{i\geq 1}$ satisfies
\[
\sum_{i\geq 1}\mathbf{1}_{B(x_i,\lambda \eps)}(x)<C
\]
for all $x\in X$.  A subordinated Lipschitz partition of unity is a family of $(C/\eps)$-Lipschitz functions $\pip_i^\eps$ $0\le \pip_i^\eps\le 1$ on $X$, $\sum_i\pip_i^\eps=1$ on $X$, and $\pip_i^\eps=0$ in $X\setminus B_i^{2 \eps}$. 
The importance of these tools will become more apparent in Section~\ref{SS:Observations}.
\end{remark}

\bigskip

The second main assumption is a $(p,p)$-Poincar\'e inequality with respect to Lipschitz functions. Recall that the Lipschitz constant of a function $f\in{\rm Lip}(X)$ is defined as
\begin{equation*}
(\mathrm{Lip} f )(y):=\limsup_{r \to 0^+} \sup_{x \in X, d(x,y) \le r} \frac{|f(x)-f(y)|}{r}.
\end{equation*}  

Throughout the paper we will consider an exponent $p \ge 1$ and make the following assumption.

\begin{assump}[$(p,p)$-Poincar\'e inequality with Lipschitz constants]\label{A:pPI_Lip}
 For any $f\in {\rm Lip}(X)$ and any ball $B(x,R)$ of radius $R>0$,
\begin{align}\label{E:pPI_Lip}
\int_{B(x,R)} | f(y) -f_{B(x,R)}|^p d\mu (y) \le C R^p \int_{B(x,\lambda R)} (\mathrm{Lip} f )(y)^p d\mu (y),
\end{align}
where
\begin{equation}\label{E:def_average}
f_{B(x,R)}:=\fint_{B(x,R)}f(y)\,d\mu(y):=\frac{1}{\mu(B(x,R))}\int_{B(x,R)}f(y)\,d\mu(y).
\end{equation}
The constants $C>0$ and $\lambda \ge 1$ in~\eqref{E:pPI_Lip} are independent from $x$, $R$ and $f$.
\end{assump}

\begin{remark}\label{R:pPI_Lip_vs_ug}
In the present setting, the $(p,p)$-Poincar\'e inequality~\eqref{E:pPI_Lip} is known to be equivalent to the $p$-Poincar\'e inequality with upper gradients, c.f.~\cite[Theorem 8.4.2]{HKST15}.
\end{remark}

\bigskip

A metric measure space $(X,d,\mu)$ as described above is often called a \emph{Cheeger space} or a \emph{PI space}. Here we are interested in studying the Korevaar-Schoen type energy functionals defined as
\begin{equation}\label{E:def_Epr}
E_{p,r}(f):= \frac{1}{r^{p}}\int_X\fint_{B(x,r)} |f(y)-f(x)|^p d\mu(y) d\mu(x)
\end{equation}
for any $f\in L^p(X,\mu)$, and the associated Korevaar-Schoen space 
\begin{equation}\label{E:def_KS1p}
KS^{1,p}(X):=\Big\{ f \in L^p(X,\mu), \, \limsup_{r \to 0^+} E_{p,r}(f) <+\infty \Big\}.  
\end{equation}

For any $f\in KS^{1,p}(X)$, we will also consider the localized energy counterpart
\begin{equation}\label{E:def_local_Epr}
E_{p,r}(f,U):= \frac{1}{r^{p}}\int_U\fint_{B(x,r)} |f(y)-f(x)|^p d\mu(y) d\mu(x)
\end{equation}
for any open $U\subset X$.

\begin{remark}\label{R:KS_equiv_weakUG}
For $p>1$, when equipped with the norm $\|f\|_{L^p(X,\mu)} +\sup_{r >0} E_{p,r}(f)^{1/p}$, the space $KS^{1,p}(X)$ coincides with the Newtonian Sobolev space $N^{1,p}(X)$ from~\cite{Sha00} with equivalent norm $\|f\|_{L^p(X,\mu)} +\| g_f \|_{L^p(X,\mu)}$, where $g_f$ is the minimal $p$-weak upper gradient of $f$. On the other hand for $p=1$,  when equipped with the norm $\|f\|_{L^1(X,\mu)} +\sup_{r >0} E_{1,r}(f)^{1/p}$, the space $KS^{1,1}(X)$ coincides with the space of bounded variation functions $BV(X)$ see for instance~\cite{Bau22,MMS16}. We refer to \cite{BV2} and \cite{miranda} for further descriptions of the space $BV(X)$ in that setting.
\end{remark}

\begin{remark}\label{R:Epr_continuity}
By virtue of~\cite[Lemma 3.1]{Bau22} the functional $E_{p,r}(f)\colon L^p(X,\mu)\to\mathbb{R}$ is continuous in $L^p$ for any fixed $r>0$ since
\[
E_{p,r}(f)\leq \frac{C}{r^p}\|f\|_{L^p(X,\mu)}.
\]
Analogous arguments yield the same property for $E_{p,r}(f,U)$ in~\eqref{E:def_local_Epr} for any fixed open set $U\subset X$.
\end{remark}


\subsection{$\Gamma$ and $\overline{\Gamma}$-convergence}\label{SS:Gamma_convergence}
The construction of Korevaar-Schoen $p$-energies and associated $p$-energy measures proposed in this paper rely on the concepts of $\Gamma$- and $\overline{\Gamma}$-convergence of functionals as presented in the monograph by Dal Maso~\cite[Chapter 9, Chapter 16]{DMas93}. While these apply in more generality, we review here the basic ideas in the context of functionals in $L^p(X,\mu)$ with $1 \le p<\infty$.

\begin{defn}\label{D:Gamma_limit}
A sequence of functionals $\{E_n\colon L^p(X,\mu)\to [-\infty,\infty]\}_{n\geq 1}$ is said to $\Gamma$-converge to a functional $E\colon L^p(X,\mu)\to [-\infty,\infty]$ if and only if
\begin{enumerate}[label=(\roman*)]
\item For every $f \in L^p(X,\mu)$ and every sequence $f_n$ that converges to $f$ strongly in $L^p(X,\mu)$ it holds that
\[
E(f) \le \liminf_{n \to +\infty} E_n (f_n).
\]
\item For every $f \in L^p(X,\mu)$ there exists a sequence $f_n$ converging to $f$ strongly in $L^p(X,\mu)$ such that
\[
\limsup_{n \to +\infty} E_n (f_n)\le E(f).
\]
\end{enumerate}
\end{defn}

\begin{remark}[Theorem 8.5~\cite{DMas93}]\label{R:cptness_Gamma}
Since $L^p(X,\mu)$ satisfies the second countability axiom, any sequence of functionals $\{E_n\}_{n>0}$ has a $\Gamma\!$-convergent subsequence.
\end{remark}

We would like to use the framework of $\Gamma$-convergence to study such limits for the Korevaar-Schoen type functionals introduced in Section~\ref{S:KS_pforms}. Because the latter are integral functionals, we will make use of a technique known in the theory of $\Gamma$-convergence as the \emph{localization method}: Given a functional $E\colon L^p(X,\mu)\to[-\infty,\infty]$, one considers its \emph{localized} version $E\colon L^p(X,\mu)\times\mathcal{O}\to[-\infty,\infty]$, where $\mathcal{O}$ denotes the family of all open subsets of the underlying space $X$. The convergence of these local versions is called $\overline{\Gamma}$-convergence and for functionals in $L^p(X,\mu)$ it can be characterized as follows.

\begin{defn}\label{D:def_Gamma_bar}
A sequence of functionals $\{E_n\colon L^p(X,\mu)\times\mathcal{O}\to [-\infty,\infty]\}_{n\geq 1}$ is said to $\overline{\Gamma}$-converge to $E\colon L^p(X,\mu)\times\mathcal{O}\to [-\infty,\infty]$ if and only if
\begin{enumerate}[label=(\roman*)]
\item For every $f \in \mathcal{X}$, for every $U \in \mathcal{O}$, and every sequence $f_n$ that converges strongly to $f$ in $L^p(X,\mu)$ it holds that
\[
E(f,U) \le \liminf_{n \to +\infty} E_n (f_n,U).
\]
\item For every $f \in L^p(X,\mu)$ and for every $U,V \in \mathcal O$ with $U\Subset V$ there exists a sequence $f_n$ converging strongly to $f$ in $L^p(X,\mu)$ such that
\[
\limsup_{n \to +\infty} E_n (f_n,U)\le E(f,V).
\]
\end{enumerate}
\end{defn}

\begin{remark}
The notation $U\Subset V$ means that the closure $\overline{U}$ is compact and satisfies $\overline{U} \subset V$.
\end{remark}

\begin{remark}\label{R:basic_props_gamma_bar}
As a consequence of the previous characterization, the $\overline{\Gamma}$-limit of a sequence is increasing, inner regular and lower semicontinuous, cf.~\cite[Remark 16.3]{DMas93}. Also the analogue of Remark~\ref{R:cptness_Gamma} is true for $\overline{\Gamma}$-convergence, cf.~\cite[Theorem 16.9]{DMas93}. 
\end{remark}

\bigskip

To avoid confusion in the terminology, let us point out that a functional $E\colon L^p(X,\mu)\times\mathcal{O}\to[-\infty,\infty]$ is said to be \emph{local} if for any $U\in\mathcal{O}$,
\begin{equation}\label{E:def_local_functional}
    E(f,U)=E(g,U)
\end{equation}
for all $f,g\in L^p(X,\mu)$ with $f|_U=g|_U$ $\mu$-a.e.

\subsection{Useful first observations}\label{SS:Observations}
This paragraph collects several consequences of the main assumptions that are used repeatedly throughout the paper. For any fixed $\varepsilon>0$, let $\{B_i^\varepsilon\}_{i\geq 1}$ denote a finite open cover with the finite overlap property and parameter $\lambda=5$ as in Remark~\ref{R:cover_partition}. Further, for any $f\in L^p(X,\mu)$,
\begin{equation}\label{E:def_Lip_approx}
    f_\varepsilon:=\sum_{i\geq 1}f_{B_i^\varepsilon}\varphi_i^\varepsilon
\end{equation}
defines a Lipschitz approximation of $f$.

\begin{prop}\label{P:Lip_const_estimate}
For any $\varepsilon>0$ and any $f\in L^p(X,\mu)$, the function $f_\varepsilon$ in~\eqref{E:def_Lip_approx} is locally Lipschitz with
\begin{equation}\label{E:Lip_const_estimate}
{\rm Lip}f_{\varepsilon}(z)\leq C\bigg(\frac{1}{\varepsilon^p}\fint_{5B_j^{\varepsilon}}\fint_{B(x,2\varepsilon)}|f(y)-f(x)|^pd\mu(y)\,d\mu(x)\bigg)^{1/p},\quad z \in B_j^\varepsilon,
\end{equation}
and $f_{\varepsilon}$ converges to $f$ in $L^p(X,\mu)$ as $\varepsilon\to 0^+$.
\end{prop}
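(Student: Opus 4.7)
The plan for the Lipschitz estimate rests on the partition identity $\sum_i \varphi_i^\varepsilon \equiv 1$, which allows, for any constant $c$, the decomposition
\[
f_\varepsilon(z) - f_\varepsilon(w) = \sum_i \bigl( f_{B_i^\varepsilon} - c\bigr)\bigl( \varphi_i^\varepsilon(z) - \varphi_i^\varepsilon(w)\bigr).
\]
For $z\in B_j^\varepsilon$ and $w$ sufficiently close to $z$, only indices $i$ with $\{z,w\}\cap B_i^{2\varepsilon}\neq\emptyset$ contribute; these satisfy $d(x_i,x_j)<3\varepsilon$, so $B_i^\varepsilon\subset 5B_j^\varepsilon$, and by the bounded-overlap property of $\{B_i^{5\varepsilon\}}$ their number is bounded by a universal constant $N$. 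Since each $\varphi_i^\varepsilon$ is $(C/\varepsilon)$-Lipschitz, Hölder's inequality gives
\[
|f_\varepsilon(z)-f_\varepsilon(w)|^p \leq \frac{C^p N^{p-1}\,d(z,w)^p}{\varepsilon^p} \sum_{i} |f_{B_i^\varepsilon}-c|^p,
\]
where the sum ranges over the $O(1)$ relevant indices.

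To match the right-hand side of~\eqref{E:Lip_const_estimate} I would choose $c=f_{B_j^\varepsilon}$ and estimate each term via Jensen:
\[
|f_{B_i^\varepsilon}-f_{B_j^\varepsilon}|^p \leq \fint_{B_i^\varepsilon}|f(y)-f_{B_j^\varepsilon}|^p\,d\mu(y).
\]
Since the relevant $B_i^\varepsilon$ sit inside $5B_j^\varepsilon$ and $\mu(5B_j^\varepsilon)\leq C\mu(B_i^\varepsilon)$ by doubling, the outer average enlarges to $\fint_{5B_j^\varepsilon}$. A second application of Jensen converts $|f(y)-f_{B_j^\varepsilon}|^p$ into $\fint_{B_j^\varepsilon}|f(y)-f(x)|^p\,d\mu(x)$, and one final use of doubling repositions the inner averaging ball around the outer integration variable, yielding an inner average of $|f(y)-f(x)|^p$ over a ball of radius of order $\varepsilon$ centered at $y$. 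After relabeling and absorbing the volume ratios into the constant, the bound takes the announced form. The main technical nuisance is the bookkeeping needed to land the inner integral on precisely $B(x,2\varepsilon)$; all inequalities used preserve the correct direction thanks to the doubling property.

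For the $L^p$ convergence I would first show that $f\mapsto f_\varepsilon$ is uniformly bounded on $L^p(X,\mu)$. Convexity of $t\mapsto|t|^p$ together with $\sum_i\varphi_i^\varepsilon=1$ yields the pointwise bound $|f_\varepsilon|^p \leq \sum_i \varphi_i^\varepsilon \fint_{B_i^\varepsilon}|f|^p\,d\mu$; integration combined with bounded overlap and doubling produces $\|f_\varepsilon\|_{L^p}\leq C\|f\|_{L^p}$ independently of $\varepsilon$. A standard three-term triangle argument then reduces convergence to the case $f\in C_c(X)$. For such $f$, uniform continuity on a neighborhood of $\supp f$ gives $|f_{B_i^\varepsilon}-f(z)|\leq \omega_f(2\varepsilon)$ whenever $z\in B_i^{2\varepsilon}$, hence $\|f_\varepsilon-f\|_\infty\to 0$; since $\supp f_\varepsilon$ stays trapped inside a fixed compact neighborhood of $\supp f$ for small $\varepsilon$, the uniform convergence upgrades to $L^p$ convergence, closing the argument.
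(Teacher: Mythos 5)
Your argument for the Lipschitz bound is essentially the paper's own: subtract a constant via $\sum_i\varphi_i^\varepsilon\equiv 1$ (the paper uses $c=f_{B_j^\varepsilon}$ implicitly by writing the difference as a sum of $|f_{B_i^\varepsilon}-f_{B_j^\varepsilon}|\,|\varphi_i^\varepsilon(z)-\varphi_i^\varepsilon(w)|$), invoke the $(C/\varepsilon)$-Lipschitz property of the $\varphi_i^\varepsilon$ and bounded overlap to reduce to $O(1)$ terms, and control $|f_{B_i^\varepsilon}-f_{B_j^\varepsilon}|$ by Jensen and doubling; the only divergence is that the paper bounds the difference of averages directly by the double average $\fint_{B_i^\varepsilon}\fint_{B(x,2\varepsilon)}|f(x)-f(y)|\,d\mu(y)\,d\mu(x)$ before applying Jensen, while you pass through $|f(y)-f_{B_j^\varepsilon}|^p$ — the same computation rearranged, with the same mild radius bookkeeping you flag (the paper's own proof is equally loose about whether $2\varepsilon$ or a slightly larger multiple of $\varepsilon$ appears in the inner ball). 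For the $L^p$ convergence you take a genuinely different route: the paper establishes the pointwise bound $|f_\varepsilon(x)-f(x)|\le C\fint_{B(x,6\varepsilon)}|f(x)-f(y)|\,d\mu(y)$, dominates via the Hardy--Littlewood maximal function, and concludes with dominated convergence and the Lebesgue differentiation theorem; you instead prove the uniform bound $\|f_\varepsilon\|_{L^p}\le C\|f\|_{L^p}$ and reduce by density to $f\in C_c(X)$, where uniform continuity gives uniform convergence on a fixed compact set. Both are correct. Your version is more elementary and is uniform in $p\ge 1$: the maximal-function step is delicate at $p=1$, where only the weak $(1,1)$ inequality is available, whereas your averaging-operator bound holds verbatim for $p=1$. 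The paper's route, on the other hand, yields the quantitative estimate \eqref{E:Lip_const_estimate_01}, which is reused later (e.g. in the proof of Theorem~\ref{T:Rellich_Kondrachov}), so it is not merely a convergence statement.
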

\begin{proof}
    We prove first the estimate~\eqref{E:Lip_const_estimate}. Let $\varepsilon>0$. For any $x,y\in B_i^\varepsilon$,
    \begin{align*}
    |f_{\varepsilon}(x)-f_{\varepsilon}(y)|&\leq
    \sum_{2B_i^{\varepsilon}\cap 2B_j^{\varepsilon}\neq \emptyset}|f_{B_i^\varepsilon}-f_{B_j^\varepsilon}|\,|\varphi_i^\varepsilon(x)-\varphi_i^\varepsilon(y)|\\
    &\leq \frac{c}{\varepsilon}d(x,y)\sum_{2B_i^{\varepsilon}\cap 2B_j^{\varepsilon}\neq \emptyset}\Big|\fint_{B_i^\varepsilon}f(x)d\mu(x)-\fint_{B_j^\varepsilon}f(y)\,d\mu(y)\Big|\\
    &\leq \frac{c}{\varepsilon}d(x,y)\sum_{2B_i^{\varepsilon}\cap 2B_j^{\varepsilon}\neq \emptyset}\fint_{B_i^\varepsilon}\fint_{B(x,2\varepsilon)}|f(x)-f(y)|d\mu(y)d\mu(x)\\
    &\leq \frac{c}{\varepsilon}d(x,y)\sum_{2B_i^{\varepsilon}\cap 2B_j^{\varepsilon}\neq \emptyset}\bigg(\fint_{B_i^\varepsilon}\fint_{B(x,2\varepsilon)}|f(x)-f(y)|^pd\mu(y)d\mu(x)\bigg)^{1/p}.
\end{align*}
The finite overlap property finally implies~\eqref{E:Lip_const_estimate}. Also due to the finite overlap property of $\{B_i^\varepsilon\}_{i\geq 1}$, for any $x\in B_j^\eps$ it holds that
\begin{align*}
    |f_{\eps}(x)-f(x)|&=\Big| \sum_{i:B_i^{2\eps}\cap B_j^{2\eps}\ne\emptyset}\varphi_i^\eps (x)(f_{B_i^\eps}-f(x))\Big|\\
    &\leq \sum_{i:B_i^{2\eps}\cap B_j^{2\eps}\ne\emptyset}\Big|\fint_{B_i^\eps}(f(y)-f(x))\,d\mu(y)\Big|\\
    &\leq \sum_{i:B_i^{2\eps}\cap B_j^{2\eps}\ne\emptyset}\fint_{B_i^\eps}|f(y)-f(x)|\,d\mu(y)\\
    &\leq C\fint_{B(x,6\eps)}|f(x)-f(y)|d\mu(y),
\end{align*}
whence
\begin{align}
\|f_{\eps}-f\|_{L^p(X,\mu)}^p & 
    = \int_X |f_\varepsilon(x)-f(x)|^pd\mu(x)\notag\\
  &\leq \sum_{j\geq 1}\int_{B_j^\varepsilon}|f_\varepsilon(x)-f(x)|^pd\mu(x)\notag\\
  &\le C  \int_X\left( \fint_{B(x,6\eps)}|f(x)-f(y)|d\mu(y) \right)^pd\mu(x).\label{E:Lip_const_estimate_01}
\end{align}
By virtue of~\cite[Theorem 3.5.6]{HKST15}, the maximal function 
\[
Mf(x)=\sup_{r>0}\fint_{B(x,r)}|f|\,d\mu
\]
is bounded in $L^p(X,\mu)$
and therefore the average integral in~\eqref{E:Lip_const_estimate_01} is bounded uniformly on $\varepsilon$. Dominated convergence and the Lebesgue differentiation theorem, see e.g.~\cite[(3.4.10)]{HKST15}, finally imply
\begin{equation}\label{E:abs_cont_Gamma_p_04}
\lim_{\eps \to 0^+} \int_X\left( \fint_{B(x,6\eps)}|f(x)-f(y)|d\mu(y)\right)^p\,d\mu(x)=0.
\end{equation}
\end{proof}

A local version of the $(p,p)$-Poincar\'e inequality~\eqref{E:pPI_Lip} is provided in the next observation. Here, for an open set $U\in \mathcal{O}$ and $\lambda>0$ we denote by $U_\lambda$ a $\lambda$-neighborhood of $U$, i.e.
\[
U_\lambda=\left\{x \in U, d(x,U) < \lambda \right\}.
\]

\begin{prop}\label{P:for_local_Kumagai_Sturm}
There exists $\Lambda>1$ such that for any $r>0$, $U\in\mathcal {O}$ and $f\in L^p(X,\mu)$,
    \begin{equation}\label{E:local_Kumagai_Sturm_01}
     \frac{1}{r^p}\int_U\fint_{B(x,r)}|f(x)-f(y)|^pd\mu(y)\,d\mu(x)\leq C\int_{U_{\Lambda r}}|{\rm Lip }f|^pd\mu.
    \end{equation}
\end{prop}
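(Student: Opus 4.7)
The plan is to localize the left-hand side of~\eqref{E:local_Kumagai_Sturm_01} using a doubling covering of $X$ at scale $r$ and then to apply the global $(p,p)$-Poincar\'e inequality~\eqref{E:pPI_Lip} on each ball of that covering. If ${\rm Lip}f \notin L^p(X,\mu)$ the right-hand side is $+\infty$ and there is nothing to prove; otherwise $f$ is at least locally Lipschitz, so ${\rm Lip}f$ is well-defined pointwise. Fix $r>0$ and denote by $\lambda \ge 1$ the dilation constant from~\eqref{E:pPI_Lip}. I would apply Remark~\ref{R:cover_partition} with enlargement parameter $2\lambda$ to obtain a covering $\{B_i:=B(x_i,r)\}_{i\ge 1}$ of $X$ whose $2\lambda$-dilated family $\{2\lambda B_i\}_{i \ge 1}$ has bounded overlap.

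For any $x \in U \cap B_i$ one has $B(x,r) \subset 2B_i$, and by~\eqref{A:VD} the quantities $\mu(B(x,r))$ and $\mu(2B_i)$ are comparable up to a doubling constant. Combining this with the elementary splitting
\[
|f(x)-f(y)|^p \le 2^{p-1}\bigl(|f(x)-f_{2B_i}|^p + |f(y)-f_{2B_i}|^p\bigr)
\]
and averaging in $y \in B(x,r)$ would give
\[
\fint_{B(x,r)} |f(x)-f(y)|^p\, d\mu(y) \le C\,|f(x)-f_{2B_i}|^p + C \fint_{2B_i} |f(y)-f_{2B_i}|^p\, d\mu(y).
\]
Integrating over $x \in U \cap B_i \subset 2B_i$ and then invoking~\eqref{E:pPI_Lip} on the ball $2B_i$ (whose $\lambda$-dilation is precisely $2\lambda B_i$) yields
\[
\int_{U \cap B_i} \fint_{B(x,r)} |f(x)-f(y)|^p\, d\mu(y)\,d\mu(x) \le C \int_{2B_i} |f - f_{2B_i}|^p\, d\mu \le C\,r^p \int_{2\lambda B_i} |{\rm Lip}f|^p\, d\mu.
\]

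To conclude, I would sum over those indices $i$ with $B_i \cap U \ne \emptyset$ (enough, since the $B_i$ cover $U$) and convert the sum into a single integral through the bounded overlap of $\{2\lambda B_i\}$. A short geometric check shows that $B_i \cap U \ne \emptyset$ forces $2\lambda B_i \subset U_{(2\lambda + 1)r}$: picking $z \in B_i \cap U$, any $y \in 2\lambda B_i$ satisfies $d(y,z) \le d(y,x_i) + d(x_i,z) \le 2\lambda r + r = (2\lambda + 1)r$. Setting $\Lambda := 2\lambda + 1$ then delivers~\eqref{E:local_Kumagai_Sturm_01}. The argument is essentially routine once the covering is fixed; the only mild subtlety is to match the enlargement parameter in Remark~\ref{R:cover_partition} to the $\lambda$ coming from the Poincar\'e inequality and to the factor $2$ needed to contain $B(x,r)$ in a ball centered at $x_i$, which is accommodated here by choosing that parameter to be $2\lambda$.
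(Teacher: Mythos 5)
Your argument is correct and follows essentially the same route as the paper: cover $X$ by balls of radius $r$ with the bounded overlap property, split $|f(x)-f(y)|^p$ around a ball average, apply the Lipschitz $(p,p)$-Poincar\'e inequality ball by ball, and resum using bounded overlap together with the inclusion of the dilated balls in a neighborhood $U_{\Lambda r}$. The one (harmless) difference is that you center the splitting at $f_{2B_i}$, whose ball contains $B(x,r)$ for every $x\in B_i$, which lets you absorb both terms into $\int_{2B_i}|f-f_{2B_i}|^p\,d\mu$ directly and so avoids the Fubini step and the separate comparison of $f_{B_i}$ with $f_{2B_i}$ that the paper's proof carries out.
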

\begin{proof}
Consider an open cover $\{B_i^\varepsilon\}_{i\geq 1}$ which as before satisfies the bounded overlap property. Then,
    \begin{align}
         &\frac{1}{\varepsilon^p}\int_U\fint_{B(x,\varepsilon)}|f(x)-f(y)|^pd\mu(y)\,d\mu(x)\notag\\
         &\leq \frac{1}{\varepsilon^p}\sum_{B_i^\varepsilon\cap U\neq\emptyset}\int_{B_i^\varepsilon}\fint_{B(x,\varepsilon)}|f(x)-f(y)|^pd\mu(y)\,d\mu(x)\notag\\
         &\leq \frac{2^{p-1}}{\varepsilon^p}\sum_{B_i^\varepsilon\cap U\neq\emptyset}\int_{B_i^\varepsilon}\fint_{B(x,\varepsilon)}|f(x)-f_{B_i^\varepsilon}|^pd\mu(y)\,d\mu(x)\\
         &+\frac{2^{p-1}}{\varepsilon^p}\sum_{B_i^\varepsilon\cap U\neq\emptyset}\int_{B_i^\varepsilon}\fint_{B(x,\varepsilon)}|f(y)-f_{B_i^\varepsilon}|^pd\mu(y)\,d\mu(x).\label{E:local_limsup_vs_liminf_00}
     \end{align}
     For the first term in the latter expression, the $(p,p)$-Poincar\'e inequality~\eqref{E:pPI_Lip} yields
     \begin{equation}\label{E:local_limsup_vs_liminf_01}
         \int_{B_i^\varepsilon}\fint_{B(x,\varepsilon)}|f(x)-f_{B_i^\varepsilon}|^pd\mu(y)\,d\mu(x)
         \leq \int_{B_i^\varepsilon}|f(x)-f_{B_i^\varepsilon}|^pd\mu(x)\leq C\varepsilon^p\int_{\Lambda B_i^\varepsilon}|{\rm Lip }f|^pd\mu. 
     \end{equation}
     For the second term in~\eqref{E:local_limsup_vs_liminf_00}, Fubini and the volume doubling property imply
     \begin{align}
       &\int_{B_i^\varepsilon}\fint_{B(x,\varepsilon)}|f(y)-f_{B_i^\varepsilon}|^pd\mu(y)\,d\mu(x)\notag\\
       &\leq C\int_{2B_i^\varepsilon}\int_{B(y,\varepsilon)}\frac{1}{\mu(B(x,\varepsilon)}|f(y)-f_{B_i^\varepsilon}|^pd\mu(x)\,d\mu(y)\notag\\
       &\leq C\int_{2B_i^\varepsilon}|f(y)-f_{B_i^\varepsilon}|^p\int_{B(y,\varepsilon)}\frac{1}{\mu(B(y,\varepsilon)}d\mu(x)\,d\mu(y)=C\int_{2B_i^\varepsilon}|f(y)-f_{B_i^\varepsilon}|^pd\mu(y)\notag\\
       &\leq C\int_{2B_i^\varepsilon}|f(y)-f_{2B_i^\varepsilon}|^pd\mu(y)+C\int_{2B_i^\varepsilon}|f_{2B_i^\varepsilon}-f_{B_i^\varepsilon}|^pd\mu(y).\label{E:local_limsup_vs_liminf_02}
     \end{align}
     The first term in~\eqref{E:local_limsup_vs_liminf_02} is bounded using the $(p,p)$-Poincar\'e inequality as in~\eqref{E:local_limsup_vs_liminf_01}. For the second one applies H\"older and the $p$-Poincar\'e inequality to obtain
     \begin{align}
         \int_{2B_i^\varepsilon}|f_{2B_i^\varepsilon}-f_{B_i^\varepsilon}|^pd\mu(y)&\leq C\mu(2B_i^\varepsilon)\Big|\int_{B_i^\varepsilon}(f_{2B_i^\varepsilon}-f(y))\,d\mu(y)\Big|^p\notag\\
         &\leq C\Big|\int_{B_i^\varepsilon}(f_{2B_i^\varepsilon}-f(y))\,d\mu(y)\Big|^p\notag\\
         &\leq C\mu(B_i^\varepsilon)^{p/q}\int_{B_i^\varepsilon}|f(y)-f_{2B_i^\varepsilon}|^pd\mu(y)\notag\\
         &\leq C\int_{B_i^\varepsilon}|f(y)-f_{2B_i^\varepsilon}|^pd\mu(y)\notag\\
         &\leq C\varepsilon^p\int_{2\Lambda B_i^\varepsilon}|{\rm Lip }f|^pd\mu.\label{E:local_limsup_vs_liminf_03}
     \end{align}
     Plugging the estimates from~\eqref{E:local_limsup_vs_liminf_01},~\eqref{E:local_limsup_vs_liminf_02} and~\eqref{E:local_limsup_vs_liminf_03} into~\eqref{E:local_limsup_vs_liminf_00} it follows that
    \begin{equation*}
    \frac{1}{\varepsilon^p}\int_U\fint_{B(x,\varepsilon)}|f(x)-f(y)|^pd\mu(y)\,d\mu(x)\leq C\sum_{B_i^\varepsilon\cap U\neq\emptyset}\int_{2\Lambda B_i^\varepsilon}|{\rm Lip }f|^pd\mu\leq C \int_{U_{2\Lambda \varepsilon}}|{\rm Lip }f|^pd\mu,
    \end{equation*}
as claimed in~\eqref{E:local_Kumagai_Sturm_01}.
\end{proof}
\section{Korevaar-Schoen $p$-energy forms}\label{S:KS_pforms}
The starting point towards constructing a $p$-energy form associated with the space $(X,d,\mu)$ is the sequence of Korevaar-Schoen type energy functionals~\eqref{E:def_Epr}. 

\subsection{Existence of $\Gamma$-limits}
The following lemma is key to guarantee the existence of a $\overline{\Gamma}$-limit of the localized functionals $\{E_{p,r}(f,U)\}_{r>0}$ and ultimately of a $\Gamma$-limit of the sequence $\{E_{p,r}\}_{r>0}$ in Theorem~\ref{T:KS_as_Gamma}.

\begin{lem}\label{L:local_Kumagai-Sturm_cond}
    Let $\{\varepsilon_n\}_{n\geq 1}$ with $\varepsilon_n>0$ and $\varepsilon_n\to 0$. There exist $C>0$ and $\Lambda >1$ independent of $\{\varepsilon_n\}_{n\geq 1}$ such that 
    \begin{equation}\label{E:local_Kumagai-Sturm_cond1}
    E_{p,r}(f,U) \le C \liminf_{n\to\infty}E_{p,\varepsilon_n}(f_n,U_{ \Lambda r})
    \end{equation}
     for every $r>0$, any $U\in\mathcal{O}$, all $f\in L^p(X,\mu)$ and $\{f_n\}_{n\geq 1}\subset L^p(X,\mu)$ with $f_n\xrightarrow{L^p}f$. In particular,
    \begin{equation}\label{E:local_Kumagai-Sturm_cond}
        \limsup_{r\to 0^+}E_{p,r}(f,U)\leq C \limsup_{r\to0^+}\liminf_{n\to\infty}E_{p,\varepsilon_n}(f_n,U_{ r})
    \end{equation}
    for every $r>0$, any $U\in\mathcal{O}$, all $f\in L^p(X,\mu)$ and $\{f_n\}_{n\geq 1}\subset L^p(X,\mu)$ with $f_n\xrightarrow{L^p}f$.
\end{lem}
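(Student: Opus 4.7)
The overall idea is to bridge $E_{p,r}(f,U)$ and $E_{p,\varepsilon_n}(f_n, U_{\Lambda r})$ through a Lipschitz approximation of $f_n$, using the two propositions of Section~\ref{SS:Observations} as the two ``halves'' of the sandwich: Proposition~\ref{P:for_local_Kumagai_Sturm} estimates $E_{p,r}$ of a Lipschitz function by an integral of $|{\rm Lip}|^p$, while Proposition~\ref{P:Lip_const_estimate} estimates $|{\rm Lip}|^p$ of the Lipschitz approximation by a Korevaar--Schoen type average of the original function. The $L^p$-continuity of $E_{p,r}(\cdot,U)$ stated in Remark~\ref{R:Epr_continuity} then allows one to pass to the limit and replace the approximation by $f$ on the left-hand side.

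Concretely, for each $n$ I would set $g_n:=(f_n)_{\varepsilon_n/2}$, using the Lipschitz approximation from~\eqref{E:def_Lip_approx} at the scale $\varepsilon_n/2$. The half-scale is chosen deliberately so that the ball $B(x,2\cdot \varepsilon_n/2)=B(x,\varepsilon_n)$ appearing in~\eqref{E:Lip_const_estimate} matches the radius $\varepsilon_n$ defining $E_{p,\varepsilon_n}$, and not $2\varepsilon_n$; this scale matching is the one point that requires attention in setting up the argument. One then shows $g_n\to f$ strongly in $L^p$ by writing $g_n-f=(f_n-f)_{\varepsilon_n/2}+(f_{\varepsilon_n/2}-f)$. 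The second term tends to zero in $L^p$ by Proposition~\ref{P:Lip_const_estimate}. For the first term, the pointwise bound $|(f_n-f)_{\varepsilon_n/2}(x)|\le C\fint_{B(x,3\varepsilon_n)}|f_n-f|\,d\mu$ (coming from the same computation as in the proof of Proposition~\ref{P:Lip_const_estimate}) combined with Jensen's inequality and Fubini, together with~\eqref{A:VD}, yields $\|(f_n-f)_{\varepsilon_n/2}\|_{L^p}\le C\|f_n-f\|_{L^p}\to 0$. This estimate uses only doubling and works uniformly in $p\ge 1$, avoiding any reliance on the maximal function theorem.

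Once $g_n$ is in hand, Proposition~\ref{P:for_local_Kumagai_Sturm} applied to the locally Lipschitz function $g_n$ gives
\[
E_{p,r}(g_n,U)\le C\int_{U_{\Lambda_0 r}}|{\rm Lip}\,g_n|^p\,d\mu
\]
for some $\Lambda_0>1$. Next I would insert the pointwise bound~\eqref{E:Lip_const_estimate} with scale $\varepsilon_n/2$, integrate over $U_{\Lambda_0 r}$ by partitioning it via the cover $\{B_j^{\varepsilon_n/2}\}_j$, and then use volume doubling (which absorbs $\mu(B_j^{\varepsilon_n/2})/\mu(5B_j^{\varepsilon_n/2})$ into a constant), the bounded overlap of $\{5B_j^{\varepsilon_n/2}\}_j$, and the inclusion $\bigcup_{j:\,B_j^{\varepsilon_n/2}\cap U_{\Lambda_0 r}\ne \emptyset}5B_j^{\varepsilon_n/2}\subseteq U_{\Lambda_0 r+3\varepsilon_n}$ to deduce
\[
\int_{U_{\Lambda_0 r}}|{\rm Lip}\,g_n|^p\,d\mu\le C\,E_{p,\varepsilon_n}(f_n,U_{\Lambda_0 r+3\varepsilon_n}).
\]

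Putting everything together, for $n$ large enough that $3\varepsilon_n\le r$ one has $U_{\Lambda_0 r+3\varepsilon_n}\subseteq U_{(\Lambda_0+1)r}$, hence $E_{p,r}(g_n,U)\le C\,E_{p,\varepsilon_n}(f_n,U_{(\Lambda_0+1)r})$. Taking $\liminf_{n\to\infty}$ and invoking Remark~\ref{R:Epr_continuity} with $g_n\to f$ in $L^p$ yields~\eqref{E:local_Kumagai-Sturm_cond1} with $\Lambda:=\Lambda_0+1$. Inequality~\eqref{E:local_Kumagai-Sturm_cond} follows from~\eqref{E:local_Kumagai-Sturm_cond1} by taking $\limsup_{r\to 0^+}$ on both sides and then substituting $r\leftarrow r/\Lambda$ on the right. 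The only real subtlety is the scale-matching in Step~4: picking the approximation scale $\varepsilon_n/2$ (rather than $\varepsilon_n$) is what avoids a spurious factor of $2^p$ in front of $\varepsilon_n$ and produces the statement with $E_{p,\varepsilon_n}$ rather than $E_{p,2\varepsilon_n}$; the rest is bookkeeping of neighborhood radii.
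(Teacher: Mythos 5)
Your proposal is correct and follows essentially the same route as the paper's proof: Lipschitz approximation of $f_n$ at scale $\varepsilon_n/2$ (the same scale-matching trick), Proposition~\ref{P:for_local_Kumagai_Sturm} combined with Proposition~\ref{P:Lip_const_estimate} and the bounded overlap of the cover, neighborhood-radius bookkeeping for $n$ large, and the $L^p$-continuity of $E_{p,r}(\cdot,U)$ to pass to the limit. Your explicit verification that $(f_n)_{\varepsilon_n/2}\to f$ in $L^p$ via the splitting $(f_n-f)_{\varepsilon_n/2}+(f_{\varepsilon_n/2}-f)$ is a welcome detail that the paper leaves implicit when invoking Remark~\ref{R:Epr_continuity}.
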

\begin{proof}
    Let $U\in\mathcal{O}$ and consider a sequence $\{f_n\}_{n\geq 1}$ that converges to some $f\in L^p(X,\mu)$ in $L^p$. Further, let $f_{n,\varepsilon}$ denote the Lipschitz approximation~\eqref{E:def_Lip_approx} of $f_n$. In view of Proposition~\ref{P:for_local_Kumagai_Sturm}, for any $r>0$ it holds that 
    \begin{equation}
    \frac{1}{r^p}\int_U\fint_{B(x,r)}|f_{n,\varepsilon}(x)-f_{n,\varepsilon}(y)|^pd\mu(y)\,d\mu(x)\leq C \int_{U_{\Lambda r}}|{\rm Lip}f_{n,\varepsilon}|^pd\mu.
\end{equation}
    Applying Proposition~\ref{P:Lip_const_estimate} further yields
    \begin{align}
        \int_{U_{\Lambda r}}|{\rm Lip}f_{n,\varepsilon}|^pd\mu&\leq \sum_{i,B_i^\varepsilon\cap U_{\Lambda r}\neq\emptyset}\int_{B_i^\varepsilon}|{\rm Lip}f_{n,\varepsilon}|^pd\mu\notag\\
        &\leq C\sum_{i,B_i^\varepsilon\cap U_{\Lambda r}\neq\emptyset} \int_{B_i^\varepsilon}\frac{1}{\varepsilon^p}\fint_{5B_i^\varepsilon}\fint_{B(z,2\varepsilon)}|f_{n}(z)-f_n(y)|^pd\mu(y)\,d\mu(z)\,d\mu(x)\notag\\
    &\leq C\sum_{i, B_i^\varepsilon \cap U_{\Lambda r} \neq \emptyset  } \int_{5B_i^\varepsilon}\frac{1}{\varepsilon^p}\fint_{B(z,2\varepsilon)}|f_n(z)-f_n(y)|^pd\mu(y)\,d\mu(z)\notag\\
    &\leq \frac{C}{\varepsilon^p}\int_{U_{\Lambda r +10\varepsilon}}\fint_{B(z,2\varepsilon)}|f_n(z)-f_n(y)|^pd\mu(y)\,d\mu(z).\label{E:local_Kumagai_Sturm_cond_01}
    \end{align}
Thus, for any $r>0$,
\begin{equation}\label{E:local_Kumagai_Sturm_cond_02}
    \frac{1}{r^p}\int_U\fint_{B(x,r)}|f_{n,\varepsilon}(x)-f_{n,\varepsilon}(y)|^pd\mu(y)\,d\mu(x)\leq \frac{C}{\varepsilon^p} \int_{U_{\Lambda r +10\varepsilon}}\fint_{B(z,2\varepsilon)}|f_n(z)-f_n(y)|^pd\mu(y)\,d\mu(z)
\end{equation}
and substituting $\varepsilon$ by $\varepsilon_n/2$ with $\varepsilon_n\to 0$ we obtain
\begin{align}
    &\liminf_{n\to\infty}\frac{1}{r^p}\int_U\fint_{B(x,r)}|f_{n,\varepsilon_n/2}(x)-f_{n,\varepsilon_n/2}(y)|^pd\mu(y)\,d\mu(x)\notag\\
    &\leq C \liminf_{n\to\infty}E_{p,\varepsilon_n}(f_n,U_{2\Lambda r+10\varepsilon_n/2})\notag
\end{align}
Let now $\eta>0$ and $N \ge 0$ such that for $n \ge N$, $\varepsilon_n  \le r$. We have then for $n \ge N$,
\begin{align}\label{E:local_Kumagai_Sturm_cond_03}
E_{p,\varepsilon_n}(f_n,U_{2\Lambda r+10\varepsilon_n/2}) \le E_{p,\varepsilon_n}(f_n,U_{2\Lambda r+10r/2})
\end{align}
On the other hand, due to the continuity of the functional $E_{p,r}(.,U)$ for fixed $U$, c.f. Remark~\ref{R:Epr_continuity}, it follows that 
\begin{equation}\label{E:local_Kumagai_Sturm_cond_04}
\liminf_{n\to\infty}E_{p,r}(f_{n,\varepsilon_n/2}, U)=E_{p,r}(f,U)    
\end{equation}
for any $U\in\mathcal{O}$. Together with~\eqref{E:local_Kumagai_Sturm_cond_03} that implies 
\begin{equation}\label{E:}
    E_{p,r}(f,U)\leq \liminf_{n\to\infty}E_{p,\varepsilon_n}(f_n,U_{(2\Lambda +10/2)r})
\end{equation}
as we wanted to prove.
\end{proof}

\begin{remark}
Lemma \ref{L:local_Kumagai-Sturm_cond} is stated for sequences, however an identical proof yields that there exist $C>0$ and $\Lambda >1$  such that 
    \begin{equation}\label{E:local_Kumagai-Sturm_cond3}
    E_{p,r}(f,U) \le C \liminf_{\varepsilon \to 0}E_{p,\varepsilon}(f_\varepsilon ,U_{ \Lambda r})
    \end{equation}
for every $r>0$, any $U\in\mathcal{O}$, all $f\in L^p(X,\mu)$ and $\{f_\varepsilon\}_{\varepsilon>0}\subset L^p(X,\mu)$ with $f_\varepsilon \xrightarrow{L^p}f$ when $\varepsilon \to 0$.
\end{remark}

\medskip

Applying \eqref{E:local_Kumagai-Sturm_cond1} with  $U=X$ yields the following refinement of the property $\mathcal{P}(p,1)$ in~\cite[Definition 4.5]{Bau22}.

\begin{lem}\label{L:Kumagai-Sturm_condition}
Let $\{r_n\}_{n\geq 0}$ with $r_n >0$ and $\lim_{n\to\infty}r_n=0$. There exists a constant $C>0$ independent of the sequence $\{r_n\}_{n\geq 0}$ such that
\[
  \sup_{r>0} E_{p,r}(f) \le  C \liminf_{n \to +\infty}  E_{p,r_n}(f_n)
\]
for all $f\in L^{p}(X,\mu)$ and all $\{f_n\}_{n\geq 1}\subset L^p(X,\mu)$ with $f_n\to f$ in $L^p(X,\mu)$.
\end{lem}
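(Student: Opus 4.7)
The plan is to deduce Lemma \ref{L:Kumagai-Sturm_condition} directly from the localized estimate \eqref{E:local_Kumagai-Sturm_cond1} of Lemma \ref{L:local_Kumagai-Sturm_cond} by specializing the open set to $U = X$. The key observation is that for any $\Lambda > 1$ and any $r > 0$, the $\Lambda r$-neighborhood of the whole space is $X_{\Lambda r} = X$ itself, so the neighborhood enlargement on the right-hand side collapses harmlessly.

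Concretely, I would apply \eqref{E:local_Kumagai-Sturm_cond1} with $U = X$ and with the chosen sequence $\varepsilon_n = r_n$, obtaining for every $r > 0$
\[
E_{p,r}(f) \;=\; E_{p,r}(f,X) \;\le\; C \liminf_{n \to \infty} E_{p,r_n}(f_n, X_{\Lambda r}) \;=\; C \liminf_{n \to \infty} E_{p,r_n}(f_n),
\]
where $C$ and $\Lambda$ are the constants of Lemma \ref{L:local_Kumagai-Sturm_cond}, independent of $r$, of the sequence $\{r_n\}$, of $f$, and of $\{f_n\}$. Since the right-hand side no longer depends on $r$, taking the supremum over $r > 0$ on the left yields
\[
\sup_{r > 0} E_{p,r}(f) \;\le\; C \liminf_{n \to \infty} E_{p,r_n}(f_n),
\]
which is exactly the claimed inequality with the same constant $C$ from Lemma \ref{L:local_Kumagai-Sturm_cond}.

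There is essentially no obstacle here: the hard analytic work (the Poincaré-type estimate, the Lipschitz approximation $f_{n,\varepsilon}$, and the passage to the limit) has already been carried out in Proposition \ref{P:for_local_Kumagai_Sturm} and Lemma \ref{L:local_Kumagai-Sturm_cond}. The only point worth verifying explicitly is that the constant $C$ coming from Lemma \ref{L:local_Kumagai-Sturm_cond} does not depend on $r$ — which is precisely the content of that lemma — so that one may legitimately pass to the supremum in $r$ without degrading the bound. This gives the desired refinement of property $\mathcal{P}(p,1)$ from \cite[Definition 4.5]{Bau22}.
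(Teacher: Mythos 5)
Your proposal is correct and is exactly the paper's argument: the paper introduces Lemma~\ref{L:Kumagai-Sturm_condition} precisely by noting that applying~\eqref{E:local_Kumagai-Sturm_cond1} with $U=X$ (so that $X_{\Lambda r}=X$) gives a bound uniform in $r$, after which one takes the supremum over $r>0$. No differences worth noting.
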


We finally arrive at the main result of this section, that provides the existence of a functional $\mathcal{E}_p$ which will be our natural candidate for an energy form.

\begin{theorem}\label{T:KS_as_Gamma}
There exists a positive sequence $\{r_n\}_{n\geq 1}$ converging to zero such that the $\Gamma$-limit
\begin{equation}\label{E:KS_as_Gamma}
    \mathcal{E}_p(f):=\Gamma\text{-}\!\!\lim_{n\to\infty}E_{p,r_n}(f)
\end{equation}
exists. Moreover,
\begin{equation*}
KS^{1,p}(X)=\{ f \in L^p(X,\mu)\colon \mathcal{E}_p(f)<\infty \}
\end{equation*}
and there exists $C>0$ such that for every $f \in KS^{1,p} (X)$
\begin{equation}\label{E:KS_comp_Gamma}
C  \sup_{r >0} E_p(f,r) \le \mathcal{E}_p(f) \le  \liminf_{r\to 0^+} E_{p,r}(f).
\end{equation}
\end{theorem}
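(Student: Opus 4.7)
The theorem decomposes into four pieces: existence of the $\Gamma$-limit, the upper comparison $\mathcal{E}_p(f)\le\liminf_{r\to 0^+}E_{p,r}(f)$, the lower comparison $C\sup_{r>0}E_{p,r}(f)\le\mathcal{E}_p(f)$, and the domain identification. Existence is immediate from Remark~\ref{R:cptness_Gamma}: apply the compactness of $\Gamma$-convergence in $L^p(X,\mu)$ to the family $\{E_{p,1/n}\}_{n\ge 1}$, extracting a $\Gamma$-convergent subsequence indexed by $r_n\to 0$ with limit $\mathcal{E}_p$. To set up the upper comparison with a continuous-parameter $\liminf$, I would actually refine this extraction by a diagonal argument along a countable $L^p$-dense family $\{g_k\}$, thinning successively so that $E_{p,r_n}(g_k)\to\liminf_{r\to 0^+}E_{p,r}(g_k)$ for every $k$, and only then invoking Remark~\ref{R:cptness_Gamma} to pass to a further $\Gamma$-convergent subsequence.

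For the upper inequality, Definition~\ref{D:Gamma_limit}(i) applied to the constant sequence $f_n\equiv f$ gives $\mathcal{E}_p(f)\le\liminf_n E_{p,r_n}(f)$. On the dense set this matches $\liminf_{r\to 0^+}E_{p,r}(g_k)$ by the diagonal choice, and the bound is transferred to arbitrary $f\in L^p(X,\mu)$ by combining the $L^p$-continuity of $E_{p,r}(\cdot)$ from Remark~\ref{R:Epr_continuity} with the lower semicontinuity of the $\Gamma$-limit. The lower comparison uses Definition~\ref{D:Gamma_limit}(ii) to produce a recovery sequence $f_n\to f$ in $L^p$ with $\limsup_n E_{p,r_n}(f_n)\le\mathcal{E}_p(f)$; chaining this with the reverse estimate $\sup_{r>0}E_{p,r}(f)\le C\liminf_n E_{p,r_n}(f_n)$ supplied by Lemma~\ref{L:Kumagai-Sturm_condition} closes the loop to $C^{-1}\sup_{r>0}E_{p,r}(f)\le\mathcal{E}_p(f)$.

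The domain equality is then free: $\mathcal{E}_p(f)<\infty$ forces $\sup_{r>0}E_{p,r}(f)<\infty$ via the lower comparison, hence $f\in KS^{1,p}(X)$; conversely, $f\in KS^{1,p}(X)$ means $\limsup_{r\to 0^+}E_{p,r}(f)<\infty$, so $\liminf_n E_{p,r_n}(f)<\infty$ and the upper comparison yields $\mathcal{E}_p(f)<\infty$. The main obstacle I foresee is the diagonal refinement in the existence step: arranging that a single sequence $\{r_n\}$ simultaneously supports $\Gamma$-convergence and realizes $\liminf_{r\to 0^+}$ on a dense set, and then transferring the resulting pointwise upper bound to every $f\in L^p(X,\mu)$, is delicate because $f\mapsto\liminf_{r\to 0^+}E_{p,r}(f)$ need not be $L^p$-continuous, so the transfer has to rely on lower semicontinuity of $\mathcal{E}_p$ rather than on any semicontinuity of the right-hand side.
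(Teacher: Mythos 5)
Your extraction of the $\Gamma$-convergent subsequence via Remark~\ref{R:cptness_Gamma}, your lower bound obtained by chaining a recovery sequence with Lemma~\ref{L:Kumagai-Sturm_condition}, and your derivation of the domain identity from the two comparison inequalities all coincide with the paper's argument. The problem is the part you add on top of it: the ``diagonal refinement'' intended to force $E_{p,r_n}(g_k)\to\liminf_{r\to 0^+}E_{p,r}(g_k)$ simultaneously for a countable dense family. This cannot be done in general. Successive thinning fails already at the second function: after passing to a subsequence of radii realizing the liminf for $g_1$, the liminf of $r\mapsto E_{p,r}(g_2)$ \emph{along that subsequence} may strictly exceed $\liminf_{r\to 0^+}E_{p,r}(g_2)$, and no further subsequence can lower it again (the minimizing radii for $g_1$ and $g_2$ may be disjoint sequences). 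On top of that, the transfer from the dense family to a general $f$ does not close: lower semicontinuity of $\mathcal{E}_p$ gives $\mathcal{E}_p(f)\le\liminf_k\liminf_{r\to 0^+}E_{p,r}(g_k)$ for $g_k\to f$, but nothing bounds $\liminf_{r\to 0^+}E_{p,r}(g_k)$ by $\liminf_{r\to 0^+}E_{p,r}(f)$, precisely because, as you yourself note, that functional is not upper semicontinuous in $L^p$. So the upper inequality in \eqref{E:KS_comp_Gamma} is not established by your route.

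The paper does not attempt this refinement. Its proof only records $\mathcal{E}_p(f)\le\liminf_{n}E_{p,r_n}(f)$ (constant recovery sequence), which already suffices for the inclusion $KS^{1,p}(X)\supseteq$ finiteness set, since $\liminf_n E_{p,r_n}(f)\le\limsup_{r\to 0^+}E_{p,r}(f)$. If you want the comparison with the full $\liminf_{r\to 0^+}$, the clean way is to accept a constant and use Lemma~\ref{L:Kumagai-Sturm_condition} once more, this time with the \emph{constant} sequence $f_m=f$ along radii $\varepsilon_m$ realizing $\liminf_{r\to 0^+}E_{p,r}(f)$: this gives $\sup_{r>0}E_{p,r}(f)\le C\liminf_{r\to 0^+}E_{p,r}(f)$, hence
\begin{equation*}
\mathcal{E}_p(f)\;\le\;\liminf_{n\to\infty}E_{p,r_n}(f)\;\le\;\sup_{r>0}E_{p,r}(f)\;\le\;C\liminf_{r\to 0^+}E_{p,r}(f),
\end{equation*}
which is the form in which the estimate is actually used later (compare the constant $C_2$ in \eqref{E:local_KS_comp_Gamma}). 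Replace your diagonal step by this argument and the rest of your proposal stands.
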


\begin{proof}
Let $\{r_n\}_{n\geq 1}$ be the sequence from Lemma~\ref{L:Kumagai-Sturm_condition}. In view of Remark~\ref{R:cptness_Gamma}, the associated sequence $\{E_{p,r_n}(f)\}_{n\geq 1}$ has a $\Gamma$-convergent subsequence, which for simplicity we still denote same. Set $\mathcal{E}_p(f):=\Gamma\text{-}\lim\limits_{n\to\infty}E_{p,r_n}(f)$. Due to the characterization of $\Gamma$-convergence, for any sequence $\{f_n\}_{n\geq 1}$ that converges strongly to $f$ in $L^p(X,\mu)$ it holds that
\[
\mathcal{E}_p(f)\leq \liminf_{n\to\infty}E_{p,r_n}(f_n).
\]
Further, applying Lemma~\ref{L:Kumagai-Sturm_condition} with the sequence $\{f_n\}_{n\geq 1}$ from the characterization of $\Gamma$-convergence we obtain
\[
\sup_{r>0}E_{p,r}(f)\leq C\liminf_{n\to\infty}E_{p,r_n}(f_n)\leq C\limsup_{n\to\infty}E_{p,r_n}(f_n)\leq \mathcal{E}_p(f).
\]
\end{proof}

A consequence of the latter theorem is the density of Lipschitz functions, which can be regarded as a regularity result on the form $\mathcal E_p$ . 

\begin{cor}\label{C:Lip_density}
The set ${\rm Lip}_{\rm loc}(X)\cap C_c(X)$ is dense in $L^p(X,\mu)$ with respect to $(\mathcal{E}_p(\cdot,\cdot)+\|\cdot\|_{L^p(X,\mu)})^{1/p}$.
\end{cor}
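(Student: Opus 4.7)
The plan is to approximate any $f\in KS^{1,p}(X)$ first by its Lipschitz regularizations $f_\varepsilon$ from~\eqref{E:def_Lip_approx} and then truncate by Lipschitz cutoffs to obtain compact support. The two workhorses already in place are Proposition~\ref{P:Lip_const_estimate} (which provides the $L^p$-convergence $f_\varepsilon\to f$ and a pointwise bound on ${\rm Lip}\,f_\varepsilon$) and Proposition~\ref{P:for_local_Kumagai_Sturm} (which converts an integrated Lipschitz-constant bound back into a bound on $E_{p,r}$).

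For the Lipschitz step, I would chain the two propositions: for every $r>0$,
\[
E_{p,r}(f_\varepsilon)\;\le\; C\int_X ({\rm Lip}\,f_\varepsilon)^p\,d\mu\;\le\; C\,E_{p,2\varepsilon}(f)\;\le\; C\sup_{s>0}E_{p,s}(f)\;\le\; C'\mathcal{E}_p(f),
\]
where the first inequality is Proposition~\ref{P:for_local_Kumagai_Sturm} with $U=X$, the second follows from the pointwise estimate~\eqref{E:Lip_const_estimate} and the bounded-overlap property of $\{B_i^\varepsilon\}$, and the last one is~\eqref{E:KS_comp_Gamma}. Taking $\liminf_{r\to 0}$ and invoking Theorem~\ref{T:KS_as_Gamma} gives the uniform bound $\mathcal{E}_p(f_\varepsilon)\le C'\mathcal{E}_p(f)$, while Proposition~\ref{P:Lip_const_estimate} ensures $f_\varepsilon\to f$ strongly in $L^p(X,\mu)$.

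For the compact-support step, I would fix $x_0\in X$ and take Lipschitz cutoffs $\chi_R$ with $\chi_R\equiv 1$ on $B(x_0,R)$, ${\rm supp}\,\chi_R\subset B(x_0,2R)$, $0\le\chi_R\le 1$ and $|{\rm Lip}\,\chi_R|\le 2/R$. The products $\chi_R f_\varepsilon$ lie in ${\rm Lip}_{\rm loc}(X)\cap C_c(X)$, converge to $f_\varepsilon$ in $L^p$ as $R\to\infty$ by dominated convergence, and the Leibniz-type bound
\[
{\rm Lip}(\chi_R f_\varepsilon)\;\le\; \chi_R\,{\rm Lip}\,f_\varepsilon+|f_\varepsilon|\,{\rm Lip}\,\chi_R
\]
together with the estimate of the previous paragraph yields $\mathcal{E}_p(\chi_R f_\varepsilon)\le C\mathcal{E}_p(f_\varepsilon)+CR^{-p}\|f_\varepsilon\|_{L^p}^p$. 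A diagonal extraction over $(\varepsilon,R)$, using the $L^p$-continuity of $E_{p,r}$ (Remark~\ref{R:Epr_continuity}) and the metrizability of the $L^p$-topology, then produces a single sequence $\{\tilde f_n\}\subset {\rm Lip}_{\rm loc}(X)\cap C_c(X)$ with $\tilde f_n\to f$ in $L^p$ and controlled energies.

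The step I expect to be the main obstacle is promoting the uniform bound $\mathcal{E}_p(\tilde f_n)\le C\mathcal{E}_p(f)$ to actual convergence in the stronger norm, i.e.\ matching $\mathcal{E}_p(f)\le \liminf\mathcal{E}_p(\tilde f_n)$ (which is free from lower semicontinuity of the $\Gamma$-limit) with a sharp $\limsup\mathcal{E}_p(\tilde f_n)\le \mathcal{E}_p(f)$. To get it, I would not regularize $f$ directly but rather start from a $\Gamma$-recovery sequence $\{g_n\}$ for $f$ given by Theorem~\ref{T:KS_as_Gamma} (so $g_n\to f$ in $L^p$ and $\limsup_n E_{p,r_n}(g_n)\le\mathcal{E}_p(f)$), apply the regularization operator at a scale $\delta_n\ll r_n$ to each $g_n$ (the estimates above show that $E_{p,r_n}((g_n)_{\delta_n})$ stays close to $E_{p,r_n}(g_n)$), truncate as in the preceding paragraph, and diagonalize to obtain the desired Lipschitz, compactly supported recovery sequence.
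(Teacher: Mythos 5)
There is a genuine gap, and it sits exactly at the step you flag as the main obstacle. Density with respect to $(\mathcal{E}_p(\cdot)+\|\cdot\|_{L^p(X,\mu)}^p)^{1/p}$ requires producing $\tilde f_n\in {\rm Lip}_{\rm loc}(X)\cap C_c(X)$ with $\mathcal{E}_p(f-\tilde f_n)\to 0$, not merely $\tilde f_n\to f$ in $L^p$ together with $\mathcal{E}_p(\tilde f_n)\to\mathcal{E}_p(f)$. For a quadratic form the two are equivalent (weak convergence plus convergence of norms gives strong convergence in a Hilbert space), but $\mathcal{E}_p$ is only known here to be convex with $\mathcal{E}_p^{1/p}$ a seminorm; passing from convergence of energy values to convergence in the energy seminorm would require a uniform convexity (Clarkson-type) inequality for $\mathcal{E}_p$ that the paper never establishes. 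So even if you achieved the sharp $\limsup_n\mathcal{E}_p(\tilde f_n)\le\mathcal{E}_p(f)$, the corollary would not follow. Moreover that sharp $\limsup$ is itself not reached by your chain: every inequality you use ($E_{p,r}(f_\varepsilon)\le C\int({\rm Lip}f_\varepsilon)^p\,d\mu\le C E_{p,2\varepsilon}(f)$, and likewise \eqref{E:KS_comp_Gamma}) carries a multiplicative constant $C>1$, and the assertion that $E_{p,r_n}((g_n)_{\delta_n})$ ``stays close to'' $E_{p,r_n}(g_n)$ for $\delta_n\ll r_n$ is not supported by those estimates, which only compare $E_{p,r}((g_n)_\delta)$ to $E_{p,2\delta}(g_n)$ with no control relating the scales $2\delta_n$ and $r_n$.

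The paper's own proof takes a much shorter route that sidesteps both issues: by Remark~\ref{R:KS_equiv_weakUG} the $KS^{1,p}$-seminorm $\sup_{r>0}E_{p,r}(\cdot)^{1/p}$ is comparable to the minimal $p$-weak upper gradient norm of $N^{1,p}(X)$, and by the upper bound in \eqref{E:KS_comp_Gamma} one has, for any $h$,
\[
\mathcal{E}_p(f-h)\;\le\;\liminf_{r\to 0^+}E_{p,r}(f-h)\;\le\;\sup_{r>0}E_{p,r}(f-h)\;\le\;C\,\|g_{f-h}\|_{L^p(X,\mu)}^p .
\]
Hence it suffices to approximate $f$ in the $N^{1,p}$-norm by functions in ${\rm Lip}_{\rm loc}(X)\cap C_c(X)$, which is exactly the classical density theorem for Newtonian spaces on PI spaces (\cite[Theorem 8.2.1]{HKST15}). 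The crucial structural point is that the energy of the \emph{difference} is controlled from above by the upper-gradient seminorm of the difference; your argument only controls the energies of the approximants themselves. If you want a self-contained constructive proof along your lines, you would need to show directly that $\sup_{r>0}E_{p,r}(f-f_\varepsilon)\to 0$ as $\varepsilon\to 0$ (a telescoping estimate on the discrete convolutions, which is essentially the content of the cited theorem), rather than bounding $\mathcal{E}_p(f_\varepsilon)$.
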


\begin{proof}
In view of Remark~\ref{R:KS_equiv_weakUG} and~\eqref{E:KS_comp_Gamma}, the assertion follows from~\cite[Theorem 8.2.1]{HKST15}. 
\end{proof}

\begin{remark}
The question about uniqueness of the $\Gamma$-limit point for the functionals $E_{p,r}(f)$ in any Cheeger space is still open. 
Yet, if the space $(X,d,\mu)$ is ${\rm RCD}(0,N)$, it follows from~\cite{Gor22}, see also \cite{HP21}, that the $\Gamma$-limit from Theorem \ref{T:KS_as_Gamma} is independent of the subsequence $r_n$. Moreover, for $p>1$, 
for every $f \in KS^{1,p}(X)$
\[
\mathcal{E}_p (f)=\lim_{r \to 0} E_{p,r}(f)= C_{N,p} \int_X g_f^p d\mu=C_{N,p} \int_X \| Df\|^2 d\mu,
\]
where $C_{N,p}$ is a universal constant, $g_f$ is the minimal $p$-weak upper gradient of $f$  and $ \| Df\|$ is defined from a Cheeger differential structure,  see~\cite[Section 2.3]{MMS16}. In the case $p=1$ still in ${\rm RCD}(0,N)$ setting, for every $f \in KS^{1,p}(X)$
\[
\mathcal{E}_p (f)=\lim_{r \to 0} E_{p,r}(f)= C_{N,1} \| Df\|(X),
\]
where $\| Df\|(X)$ is the total variation of $f$ as defined in Section \ref{SS:BV measure}.
\end{remark}

\subsection{Some properties of the $p$-energy form}

The functional $\mathcal{E}_p(f)$ may be thought of as an analogue of the Euclidean $p$-energy form $\int_{\mathbb R^n} | \nabla f |^p dx$. Our next goal will be to define the analogue of  $\int_{\mathbb R^n} | \nabla f |^{p-2} \left \langle \nabla f,\nabla g \right \rangle dx$, which will in particular be used to define a $p$-Laplacian when $p>1$. 

\medskip

The next paragraphs present several key properties that allow to extend the functional $(\mathcal{E}_p,KS^{1,p}(X))$, in~\eqref{E:KS_as_Gamma} to a form $\mathcal{E}_p\colon KS^{1,p}(X)\times KS^{1,p}(X)\to\mathbb{R}$. 
%

\begin{lem}
$(\mathcal{E}_p,KS^{1,p}(X))$ is a convex functional.
\end{lem}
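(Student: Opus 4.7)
The plan is to reduce convexity of the $\Gamma$-limit to convexity of the approximating functionals $E_{p,r_n}$, using a recovery-sequence argument that is standard in the $\Gamma$-convergence toolkit.

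First, I would observe that each $E_{p,r_n}$ is itself convex on $L^p(X,\mu)$. Indeed, for fixed $x,y\in X$ the map $f\mapsto |f(y)-f(x)|^p$ is the composition of the linear functional $f\mapsto f(y)-f(x)$ with the convex map $t\mapsto |t|^p$, so it is convex; integrating with respect to the positive measure $\frac{1}{r_n^p\,\mu(B(x,r_n))}d\mu(y)d\mu(x)$ preserves convexity. Thus for any $f,g\in L^p(X,\mu)$ and $\lambda\in[0,1]$,
\[
E_{p,r_n}\bigl((1-\lambda)f+\lambda g\bigr)\le (1-\lambda) E_{p,r_n}(f)+\lambda E_{p,r_n}(g).
\]

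Next, I would pick arbitrary $f,g\in KS^{1,p}(X)$ and $\lambda\in[0,1]$, and use the $\limsup$ clause of Definition~\ref{D:Gamma_limit} to obtain recovery sequences $f_n\xrightarrow{L^p} f$ and $g_n\xrightarrow{L^p} g$ with
\[
\limsup_{n\to\infty} E_{p,r_n}(f_n)\le \mathcal{E}_p(f),\qquad \limsup_{n\to\infty} E_{p,r_n}(g_n)\le \mathcal{E}_p(g).
\]
Setting $h_n:=(1-\lambda)f_n+\lambda g_n$, clearly $h_n\xrightarrow{L^p} h:=(1-\lambda)f+\lambda g$. Applying convexity of each $E_{p,r_n}$ to $h_n$ and then the $\liminf$ clause of $\Gamma$-convergence to the sequence $\{h_n\}$ converging to $h$, I would conclude
\[
\mathcal{E}_p(h)\le \liminf_{n\to\infty} E_{p,r_n}(h_n)\le (1-\lambda)\limsup_{n\to\infty} E_{p,r_n}(f_n)+\lambda\limsup_{n\to\infty} E_{p,r_n}(g_n)\le (1-\lambda)\mathcal{E}_p(f)+\lambda\mathcal{E}_p(g),
\]
which is the desired convexity inequality. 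In particular, if either $f$ or $g$ lies outside $KS^{1,p}(X)$ the right-hand side is $+\infty$ and the inequality is trivial, so the statement extends to all of $L^p(X,\mu)$ with value $+\infty$ off $KS^{1,p}(X)$.

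There is no substantive obstacle here: the only subtle step is making sure the same index $n$ can be used for the two recovery sequences (which is automatic, as one may extract along the same sequence $\{r_n\}$) and that the liminf inequality of $\Gamma$-convergence is applied to the convex combination $h_n$ rather than to $f_n$ or $g_n$ individually.
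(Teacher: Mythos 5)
Your argument is correct and is essentially the paper's own proof: both take recovery sequences for $f$ and $g$, note that their convex combination converges to $\lambda f+(1-\lambda)g$, apply the $\liminf$ inequality of $\Gamma$-convergence to that combination, and then use the convexity of each $E_{p,r_n}$ together with the $\limsup$ bounds. The additional remarks (convexity of the integrands, extending to $+\infty$ off $KS^{1,p}(X)$) are sound but not needed beyond what the paper records.
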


\begin{proof}
Let $f,g\in KS^{1,p}(X)$ and $\lambda \in [0,1]$. 
Further, let $\{f_n\}_{n\geq 1}$ and $\{g_n\}_{n\geq 1}$ be sequences as those from Definition~\ref{D:Gamma_limit}. 
In particular, the sequence $\lambda f_n +(1-\lambda) g_n$ converges to $\lambda f+(1-\lambda)g$ in $L^p(X,\mu)$. 
Therefore, by definition of $\Gamma$-convergence, 
\[
\mathcal{E}_p (\lambda f +(1-\lambda)g)\le \liminf_{n  \to +\infty}   E_{p}(\lambda f_n +(1-\lambda)g_n,r_n).
\]
Moreover, note that the functionals $E_p(\lambda f_n +(1-\lambda)g_n,r_n)$ are convex, whence
\[
\mathcal{E}_p (\lambda f +(1-\lambda)g)\le \liminf_{n  \to +\infty} \left(  \lambda  E_{p}( f_n ,r_n)+(1-\lambda)E_{p}(g_n ,r_n) \right)
\]
and thus
\[
\mathcal{E}_p (\lambda f +(1-\lambda)g) \le \lambda \mathcal{E}_p (f) +(1-\lambda)\mathcal{E}_p(g).
\]
\end{proof}

The next property is called ``Leibniz rule'' in~\cite[Theorem 6.25]{MS23}.

\begin{prop}\label{P:pseudo_Leibniz}
For any $f,g\in KS^{1,p}(X) \cap L^\infty(X,\mu)$, then $fg \in KS^{1,p}(X)$ and
\begin{equation}\label{E:pseudo_Leibniz}
    \mathcal{E}_p(fg)\leq 2^{p-1}\big(\mathcal{E}_p(f)\|g\|_{L^\infty(X,\mu)}+\mathcal{E}_p(g)\|f\|_{L^\infty(X,\mu)}\big).
\end{equation}
\end{prop}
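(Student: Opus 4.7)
The plan is to reduce the inequality to a pointwise Leibniz bound at the scale of the approximating energies $E_{p,r}$ and then pass to the $\Gamma$-limit along a carefully chosen truncated recovery sequence. Starting from the identity $f(x)g(x)-f(y)g(y)=(f(x)-f(y))g(x)+f(y)(g(x)-g(y))$, the elementary inequality $|a+b|^p\leq 2^{p-1}(|a|^p+|b|^p)$ gives the pointwise bound
\[
|f(x)g(x)-f(y)g(y)|^p\leq 2^{p-1}\bigl(\|g\|_{L^\infty(X,\mu)}^p\,|f(x)-f(y)|^p+\|f\|_{L^\infty(X,\mu)}^p\,|g(x)-g(y)|^p\bigr).
\]
Averaging in $y$ over $B(x,r)$, integrating in $x$, and dividing by $r^p$ I would obtain the finite-scale Leibniz inequality
\[
E_{p,r}(fg)\leq 2^{p-1}\bigl(\|g\|_{L^\infty(X,\mu)}^p\,E_{p,r}(f)+\|f\|_{L^\infty(X,\mu)}^p\,E_{p,r}(g)\bigr)\quad\text{for all }r>0,
\]
which already shows $fg\in KS^{1,p}(X)$ in view of~\eqref{E:def_KS1p}.

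The delicate step, and the main obstacle, is the passage to the $\Gamma$-limit along the sequence $\{r_n\}$ of Theorem~\ref{T:KS_as_Gamma}, because the definition of $\Gamma$-convergence only provides $L^p$-convergent recovery sequences and carries no built-in $L^\infty$-control. One cannot naively multiply the raw recovery sequences of $f$ and $g$ and still have the product converge in $L^p$ to $fg$. To fix this I would choose recovery sequences $f_n\to f$ and $g_n\to g$ in $L^p(X,\mu)$ along which $E_{p,r_n}(f_n)\to \mathcal E_p(f)$ and $E_{p,r_n}(g_n)\to \mathcal E_p(g)$ (the $\limsup\leq$ from the recovery property combined with the $\liminf\geq$ inequality of Definition~\ref{D:Gamma_limit} applied to the same sequence gives the genuine limit) and then truncate, setting $\tilde f_n:=T_{\|f\|_{L^\infty}}(f_n)$ and $\tilde g_n:=T_{\|g\|_{L^\infty}}(g_n)$, where $T_M$ is the cut-off at level $M$. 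Since $T_M$ is $1$-Lipschitz and fixes $f$ pointwise (as $|f|\leq\|f\|_{L^\infty}$ $\mu$-a.e.), one gets $\tilde f_n\to f$ in $L^p$, $\|\tilde f_n\|_{L^\infty(X,\mu)}\leq \|f\|_{L^\infty(X,\mu)}$, and $E_{p,r_n}(\tilde f_n)\leq E_{p,r_n}(f_n)$, so $\{\tilde f_n\}$ remains a recovery sequence for $f$, with $E_{p,r_n}(\tilde f_n)\to \mathcal E_p(f)$; similarly for $\tilde g_n$.

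The uniform $L^\infty$-bound then makes $\tilde f_n\tilde g_n\to fg$ in $L^p(X,\mu)$, since
\[
|\tilde f_n\tilde g_n-fg|\leq \|g\|_{L^\infty(X,\mu)}|\tilde f_n-f|+\|f\|_{L^\infty(X,\mu)}|\tilde g_n-g|.
\]
The $\liminf$ half of $\Gamma$-convergence therefore gives $\mathcal E_p(fg)\leq \liminf_n E_{p,r_n}(\tilde f_n\tilde g_n)$, and applying the finite-scale Leibniz inequality to the bounded pair $\tilde f_n,\tilde g_n$ bounds this by
\[
\lim_{n\to\infty}2^{p-1}\bigl(\|g\|_{L^\infty(X,\mu)}^p\,E_{p,r_n}(\tilde f_n)+\|f\|_{L^\infty(X,\mu)}^p\,E_{p,r_n}(\tilde g_n)\bigr)=2^{p-1}\bigl(\|g\|_{L^\infty(X,\mu)}^p\,\mathcal E_p(f)+\|f\|_{L^\infty(X,\mu)}^p\,\mathcal E_p(g)\bigr),
\]
which is~\eqref{E:pseudo_Leibniz} (the $L^\infty$ factors appear with their natural exponent $p$ as forced by the pointwise bound). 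The whole argument only uses convexity-type manipulations together with the $\Gamma$-convergence characterization and the stability of recovery sequences under truncation; apart from the truncation trick no nontrivial machinery is required.
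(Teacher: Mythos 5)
Your proof is correct and follows essentially the same route as the paper's: truncate the recovery sequences to gain uniform $L^\infty$ control, use the pointwise product decomposition together with the convexity inequality $|a+b|^p\le 2^{p-1}(|a|^p+|b|^p)$, and pass to the $\Gamma$-limit via the $\liminf$ inequality applied to $\tilde f_n\tilde g_n\to fg$. The only differences are cosmetic: you truncate at the exact level $\|f\|_{L^\infty(X,\mu)}$ rather than at $\|f\|_{L^\infty(X,\mu)}+\varepsilon$ as the paper does, and your final bound carries the correct exponent $p$ on the $L^\infty$ factors (and the constant $2^{p-1}$), which the paper's displayed inequality~\eqref{E:pseudo_Leibniz} writes without the power.
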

\begin{proof}
    Let $f,g\in KS^{1,p}(X) \cap L^\infty(X,\mu)$ and consider the sequences $\{f_n\}_{n\geq 1}$, and $\{g_n\}_{n\geq 1}$ in $L^p(X,\mu)$ that satisfy $f_n\xrightarrow{L^p}f$, $g_n\xrightarrow{L^p}g$ and 
    \begin{equation}\label{E:pseudo_Leib_1}
    \limsup_{n\to\infty}E_{p,r_n}(f_n)\leq \mathcal{E}_p(f)\qquad\limsup_{n\to\infty}E_{p,r_n}(g_n)\leq \mathcal{E}_p(g),
    \end{equation}
    c.f. Definition~\ref{D:Gamma_limit}.  Let now $\varepsilon >0$ and consider the sequence
    \begin{align}
    \tilde{f}_n(x)=
    \begin{cases}
    f_n(x), \quad \text{if } |f_n(x)| \le \| f \|_{L^\infty(X,\mu)} +\varepsilon \\
    \| f \|_{L^\infty(X,\mu)} +\varepsilon, \quad \text{if } f_n(x) > \| f \|_{L^\infty(X,\mu)} +\varepsilon \\
    -\| f \|_{L^\infty(X,\mu)} -\varepsilon, \quad  \text{if } f_n(x) <- \| f \|_{L^\infty(X,\mu)} -\varepsilon.
    \end{cases}
      \end{align}
      A sequence $\tilde{g}_n$ is defined in terms of $g_n$ analogously. 
    We note that $\tilde f_n\xrightarrow{L^p}f$, $\tilde g_n\xrightarrow{L^p}g$ and moreover that $ \| \tilde f_n \|_{L^\infty(X,\mu)} \le \| f \|_{L^\infty(X,\mu)} +\varepsilon$, $ \| \tilde g_n \|_{L^\infty(X,\mu)} \le \| g \|_{L^\infty(X,\mu)} +\varepsilon$.
    
    Since $\tilde{f}_n \tilde{g}_n\xrightarrow{L^p}fg$, the definition of $\Gamma$-convergence  yields
    \begin{equation}\label{E:pseudo_Leib_2}
        \mathcal{E}_p(fg)\leq \liminf_{n\to\infty} E_{p,r_n}(\tilde f_n \tilde g_n).
    \end{equation}
    Moreover,
    \begin{align*}
        E_{p,r_n}(\tilde f_n \tilde g_n)&=\frac{1}{r_n^p}\int_X\fint_{B(x,r_n)}|\tilde f_n \tilde g_n(x)-\tilde f_n \tilde g_n(y)|^pd\mu(y)\,d\mu(x)\\
        &=\frac{1}{r_n^p}\int_X\fint_{B(x,r_n)}|g_n(y)\big(\tilde f_n(x)-\tilde f_n(y)\big)+\tilde f_n(x)\big(\tilde g_n(x)-\tilde g_n(y)\big)|^pd\mu(y)\,d\mu(x)\\
        &\leq \frac{2^p}{r_n^p}\int_X\fint_{B(x,r_n)}|\tilde g_n(y)|^p|\tilde f_n(x)-\tilde f_n(y)|^pd\mu(y)\,d\mu(x)\\
        &+\frac{2^p}{r_n^p}\int_X\fint_{B(x,r_n)}|\tilde f_n(x)|^p|\tilde g_n(x)-\tilde g_n(y)|^pd\mu(y)\,d\mu(x)\\
         &\leq \frac{2^p}{r_n^p}\int_X\fint_{B(x,r_n)}|\tilde g_n(y)|^p| f_n(x)- f_n(y)|^pd\mu(y)\,d\mu(x)\\
        &+\frac{2^p}{r_n^p}\int_X\fint_{B(x,r_n)}|\tilde f_n(x)|^p| g_n(x)- g_n(y)|^pd\mu(y)\,d\mu(x)\\
        &\leq 2^p\big(\|\tilde g_n\|_{L^\infty(X,\mu)}E_{p,r_n}(f_n)+\|\tilde f_n\|_{L^\infty(X,\mu)}E_{p,r_n}(g_n)\big)\\
         &\leq 2^p\big((\|\tilde g\|_{L^\infty(X,\mu)}+\varepsilon)E_{p,r_n}(f_n)+(\|\tilde f\|_{L^\infty(X,\mu)}+\varepsilon)E_{p,r_n}(g_n)\big).
    \end{align*}
    Plugging the latter into~\eqref{E:pseudo_Leib_2} and using~\eqref{E:pseudo_Leib_1} we finally get~\eqref{E:pseudo_Leibniz} since $\varepsilon$ is arbitrary.
\end{proof}

\begin{prop}\label{P:comp_Lip}
    For any $f\in KS^{1,p}(X)$ and any $1$-Lipschitz function $\varphi\in C(\mathbb{R})$, $\varphi{\circ}f\in KS^{1,p}(X)$ and
    \begin{equation}\label{E:comp_Lip}
        \mathcal{E}_p(\varphi{\circ}f)\leq \mathcal{E}_p(f).
    \end{equation}
\end{prop}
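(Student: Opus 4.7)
The strategy is to use the defining recovery sequence of the $\Gamma$-limit for $f$, push it through $\varphi$, and exploit the pointwise inequality $|\varphi(a)-\varphi(b)|\leq |a-b|$ coming from $\varphi$ being $1$-Lipschitz. Concretely, by Definition~\ref{D:Gamma_limit}(ii) applied to $f$ I can find a sequence $f_n\xrightarrow{L^p}f$ with $\limsup_n E_{p,r_n}(f_n)\leq \mathcal{E}_p(f)$. My candidate recovery sequence for $\varphi\circ f$ will simply be $\varphi\circ f_n$.

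Before invoking $\Gamma$-convergence I need to know that $\varphi\circ f\in L^p(X,\mu)$ so that the statement makes sense; this is the one point that requires a little care because a generic $1$-Lipschitz $\varphi$ only satisfies $|\varphi(t)|\leq |\varphi(0)|+|t|$, which is not in $L^p$ when $\mu(X)=\infty$ and $\varphi(0)\neq 0$. I would bypass this by noting that translating $\varphi$ by the constant $\varphi(0)$ does not change either $\mathcal{E}_p(\varphi\circ f)$ (since $E_{p,r}$ annihilates additive constants) or the hypothesis that $\varphi$ is $1$-Lipschitz, so I may assume $\varphi(0)=0$. Under this normalization $|\varphi(t)|\leq|t|$, so $\varphi\circ f_n,\varphi\circ f\in L^p(X,\mu)$, and in fact $\|\varphi\circ f_n-\varphi\circ f\|_{L^p}\leq \|f_n-f\|_{L^p}\to 0$.

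Once this reduction is in place the argument is immediate. The pointwise inequality $|\varphi(f_n(x))-\varphi(f_n(y))|\leq |f_n(x)-f_n(y)|$ integrates to
\[
E_{p,r_n}(\varphi\circ f_n)\leq E_{p,r_n}(f_n)
\]
for every $n$, and the $\liminf$ part of Definition~\ref{D:Gamma_limit}(i) applied to the sequence $\varphi\circ f_n\xrightarrow{L^p}\varphi\circ f$ yields
\[
\mathcal{E}_p(\varphi\circ f)\leq \liminf_{n\to\infty}E_{p,r_n}(\varphi\circ f_n)\leq \limsup_{n\to\infty}E_{p,r_n}(f_n)\leq \mathcal{E}_p(f).
\]
In particular the right-hand side is finite, so $\varphi\circ f\in KS^{1,p}(X)$ by Theorem~\ref{T:KS_as_Gamma}. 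There is no serious obstacle; the only subtle point is the integrability of $\varphi\circ f$, handled by the translation trick.
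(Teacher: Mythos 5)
Your proof is correct and follows essentially the same route as the paper's: take the recovery sequence $f_n$ for $f$, observe that $E_{p,r_n}(\varphi\circ f_n)\le E_{p,r_n}(f_n)$ by the $1$-Lipschitz property, and conclude via the $\liminf$ inequality for $\varphi\circ f_n\to\varphi\circ f$. Your normalization $\varphi(0)=0$ to guarantee $\varphi\circ f\in L^p(X,\mu)$ when $\mu(X)=\infty$ is a point the paper passes over silently, and it is a worthwhile addition.
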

\begin{proof}
    Take the same sequence as in~\eqref{E:pseudo_Leib_1} and note that $\varphi{\circ}f_n\xrightarrow{L^p}\varphi{\circ}f$. 
    Further,
    \begin{align*}
        E_{p,r_n}(\varphi{\circ}f_n)&=\frac{1}{r_n^p}\int_X\fint_{B(x,r_n)}|\varphi{\circ}f_n(x)-\varphi{\circ}f_n(y)|^pd\mu(y)\,d\mu(x)\\
        &\leq \frac{1}{r_n^p}\int_X\fint_{B(x,r_n)}|f_n(x)-f_n(y)|^pd\mu(y)\,d\mu(x)=E_{p,r_n}(f_n).
    \end{align*}
    By definition of $\Gamma$-convergence and~\eqref{E:pseudo_Leib_1}, the latter implies
    \begin{equation*}
        \mathcal{E}_p(\varphi{\circ}f)\leq \liminf_{n\to\infty} E_{p,r_n}(\varphi{\circ}f_n)\leq \limsup_{n\to\infty} E_{p,r_n}(\varphi{\circ}f_n)\leq \limsup_{n\to\infty} E_{p,r_n}(f_n)\leq \mathcal{E}_p(f)
    \end{equation*}
    as we wanted to prove.
\end{proof}
The next two lemmas provide rigorous arguments leading to the understanding of $\mathcal{E}_p(f,g)$ as the derivative of $t\mapsto\frac{1}{p}\mathcal{E}_p(f+tg)$ at $t=0$. This approach, suggested in~\cite{SW04}, will allow us to pass from the functional $\mathcal{E}_p(f)$ to a $p$-energy \emph{form} $\mathcal{E}_p(f,g)$. 

\bigskip

The first lemma guarantees that the left and right limit coincide.
\begin{lem}\label{L:same_lim_RL}
Assume $p>1$. For any $f,g\in KS^{1,p}(X)$,
\begin{equation}\label{E:same_lim_RL}
    \lim_{t \to 0} \frac{\mathcal{E}_p (f+tg) +\mathcal{E}_p (f-tg)- 2 \mathcal{E}_p (f)}{t}=0.
\end{equation}
\end{lem}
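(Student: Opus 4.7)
The plan is to combine the convexity of $\mathcal{E}_p$ from the previous lemma with the $\Gamma$-liminf inequality and an elementary Taylor estimate for the map $x\mapsto |x|^p$ to reduce the problem to a bound of order $t^2+t^p$ on the approximating Korevaar-Schoen functionals $E_{p,r_n}$.

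First, I would observe that convexity of $\mathcal{E}_p$ applied to the identity $f=\tfrac12(f+tg)+\tfrac12(f-tg)$ gives
\[
\mathcal{E}_p(f+tg)+\mathcal{E}_p(f-tg)-2\mathcal{E}_p(f)\;\ge\;0,
\]
and this expression is even in $t$. Hence the quotient in~\eqref{E:same_lim_RL} is non-negative for $t>0$, non-positive for $t<0$, and the claim reduces to proving $\limsup_{t\to 0^+}(\cdots)/t\le 0$.

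To obtain an upper bound I would take recovery sequences $\{f_n\}$ for $f$ and $\{g_n\}$ for $g$ as in Definition~\ref{D:Gamma_limit}(ii); combined with the $\Gamma$-liminf inequality they satisfy $\lim_n E_{p,r_n}(f_n)=\mathcal{E}_p(f)$ and $\lim_n E_{p,r_n}(g_n)=\mathcal{E}_p(g)$. Since $f_n\pm tg_n\to f\pm tg$ in $L^p(X,\mu)$ for any fixed $t$, two applications of Definition~\ref{D:Gamma_limit}(i) together with the elementary inequality $\liminf a_n+\liminf b_n-2\lim c_n\le \liminf(a_n+b_n-2c_n)$ (valid since $\lim c_n$ exists) yield
\[
\mathcal{E}_p(f+tg)+\mathcal{E}_p(f-tg)-2\mathcal{E}_p(f)\;\le\;\liminf_{n\to\infty}\bigl[E_{p,r_n}(f_n+tg_n)+E_{p,r_n}(f_n-tg_n)-2E_{p,r_n}(f_n)\bigr].
\]
The remaining step is to bound the right-hand side pointwise. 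Since $p>1$, a Taylor expansion for the convex function $\phi(x)=|x|^p$ provides a constant $C_p>0$ with
\[
0\;\le\;\phi(a+h)+\phi(a-h)-2\phi(a)\;\le\;\begin{cases} C_p\bigl(h^2|a|^{p-2}+|h|^p\bigr), & p\ge 2, \\ C_p|h|^p, & 1<p<2, \end{cases}
\]
for all $a,h\in\mathbb{R}$. Applying this with $a=f_n(y)-f_n(x)$ and $h=t(g_n(y)-g_n(x))$, integrating against the Korevaar-Schoen kernel $\fint_{B(x,r_n)}d\mu(y)\,d\mu(x)/r_n^p$, and estimating the mixed term for $p\ge 2$ by H\"older's inequality with conjugate exponents $p/(p-2)$ and $p/2$, one obtains
\[
E_{p,r_n}(f_n+tg_n)+E_{p,r_n}(f_n-tg_n)-2E_{p,r_n}(f_n)\;\le\; C_p t^2 E_{p,r_n}(f_n)^{(p-2)/p}E_{p,r_n}(g_n)^{2/p}+C_p t^p E_{p,r_n}(g_n)
\]
for $p\ge 2$, and $\le C_p t^p E_{p,r_n}(g_n)$ for $1<p<2$. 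Passing to the limit $n\to\infty$, dividing by $t>0$ and sending $t\to 0^+$ gives an upper bound of order $O(t)+O(t^{p-1})$ (respectively $O(t^{p-1})$), which vanishes since $p>1$. Together with the convexity lower bound and the evenness of the numerator this establishes~\eqref{E:same_lim_RL}.

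The main obstacle I anticipate is the elementary pointwise Taylor estimate for $|x|^p$ in the range $1<p<2$, where $\phi''(x)=p(p-1)|x|^{p-2}$ blows up at the origin; the bound has to be justified by splitting the analysis according to whether $|a|\le|h|$ or $|a|>|h|$, handling the near-origin behavior separately. Everything else is a rather mechanical combination of $\Gamma$-convergence bookkeeping and H\"older's inequality.
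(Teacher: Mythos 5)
Your proposal is correct and follows essentially the same route as the paper: convexity of $\mathcal{E}_p$ for the lower bound, recovery sequences combined with the $\Gamma$-liminf inequality for the upper bound, and a second-order Taylor estimate for $|x|^p$ (with H\"older for the mixed term when $p\ge 2$ and the case split $1<p<2$ handled via the $|h|^p$ bound). The only cosmetic difference is that you estimate the symmetric second difference $\phi(a+h)+\phi(a-h)-2\phi(a)$ directly, whereas the paper expands $|x\pm ty|^p$ separately and cancels the first-order terms.
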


\begin{proof}
Assume first $p \ge 2$. Taylor's expansion provides for $x,y \in \mathbb{R}$ and $t \in [-1,1]$,
\begin{equation}\label{E:Taylor_abs_p}
|x+ty|^p=|x|^p+p y \mathrm{sign}(x) |x|^{p-1} t+t^2R_1(t,x,y),
\end{equation}
where the remainder $R_1(t,x,y)$ satisfies $|R_1(t,x,y)| \le C |y|^2(|x|^{p-2}+|y|^{p-2})$. Thus we have
\begin{align}\label{eq-par}
|x+ty|^p+|x-ty|^p=2|x|^p+t^2R_2(t,x,y)
\end{align}
where $R_2(t,x,y)$ satisfies $|R_2(t,x,y)| \le C |y|^2(|x|^{p-2}+|y|^{p-2})$.

\medskip

Let now $f,g \in KS^{1,p}(X)$. Let $f_n,g_n \in L^p(X,\mu)$ be such that $f_n\to f$ in $L^p$, $g_n\to g$ in $L^p$, and $E_{p,r_n}(f_n)\to \mathcal{E}_{p}(f)$, $E_p(g_n,r_n)\to \mathcal{E}_p (g) $. Using~\eqref{eq-par} one obtains
\begin{align*}
E_{p,r_n}(f_n+tg_n)+E_{p,r_n}(f_n-tg_n)=2E_{p,r_n}(f_n)+t^2R_n(t),
\end{align*}
where H\"older's inequality shows that the remainder term $R_n(t)$ satisfies 
\[
|R_n(t)| \le C(E_{p,r_n}(f_n)^{1-2/p}E_{p,r_n}(g_n)^{2/p}+E_{p,r_n}(v_n)). 
\]
By $\Gamma$-convergence one has then
\begin{align*}
\mathcal{E}_p (f+tg) +\mathcal{E}_p (f-tg) &\le  \liminf_{n \to +\infty} E_{p,r_n}(g_n+tg_n)+\liminf_{n \to +\infty} E_{p,r_n}(f_n-tg_n) \\
 &\le  \liminf_{n \to +\infty} \left[ E_{p,r_n}(f_n+tg_n)+ E_{p,r_n}(f_n-tg_n) \right] \\
 &\le \liminf_{n \to +\infty} \left[ 2E_{p,r_n}(f_n)+t^2R_n(t) \right] \\
 &\le 2 \mathcal{E}_p (f)+t^2 \sup_n |R_n(t)|
\end{align*}
Therefore,
\[
\limsup_{t \to 0} \frac{\mathcal{E}_p (f+tg) +\mathcal{E}_p (f-tg)- 2 \mathcal{E}_p (f)}{t} \le 0.
\]
On the other hand, the convexity of the functional $\mathcal{E}_p$ implies that
\[
\mathcal{E}_p (f+tg) +\mathcal{E}_p (f-tg)- 2 \mathcal{E}_p (f) \ge 0
\]
from which we conclude
\[
\lim_{t \to 0} \frac{\mathcal{E}_p (f+tg) +\mathcal{E}_p (f-tg)- 2 \mathcal{E}_p (f)}{t} = 0.
\]
In the case $1<p<2$, for $x,y \in \mathbb{R}$ and $t \in [-1,1]$ we have
\[
|x+ty|^p=|x|^p+p y\,{\rm sign}(x) |x|^{p-1} t+t^{2}R_1(t,x,y),
\]
where the Taylor remainder $R_1(t,x,y)$ satisfies $|R_1(t,x,y)| \le C |y|^p$. The proof proceeds then exactly as before.
\end{proof}

The second lemma in addition provides an expression of the $p$-energy form in terms of suitable convergent sequences of functions.

\begin{lem}\label{L:def_Ep_fg}
Assume $p>1$. For any $f,g \in KS^{1,p}(X)$ the limit  
\[
\mathcal{E}_p (f,g):=\frac{1}{p} \lim_{t \to 0^+} \frac{\mathcal{E}_p (f+tg) - \mathcal{E}_p (f)}{t}
\]
exists. Moreover, for any $f,g\in KS^{1,p}(X)$ there exists a sequence $\{f_n\}_{n\geq 1}\subset L^p(X,\mu)$ with $f_n\xrightarrow{L^p} f$ such that
\begin{equation}\label{E:Ep_fg_char}
\mathcal{E}_p (f,g)=\lim_{n \to \infty} \frac{1}{r_n^{p}} \int_X \fint_{B(x,r_n)} |f_n(x)-f_n(y)|^{p-2} (f_n(x)-f_n(y))(g_n(x)-g_n(y))d\mu(y) d\mu(x)
\end{equation}
for any $\{g_n\}_{n\geq 1}\subset L^p(X,\mu)$ with $g_n\xrightarrow{L^p} g$.
\end{lem}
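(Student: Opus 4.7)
The plan is two-fold: first, establish existence of the one-sided derivative $\mathcal{E}_p(f,g)$ from convexity of $\mathcal{E}_p$ together with the cancellation of Lemma~\ref{L:same_lim_RL}; second, extract the integral representation~\eqref{E:Ep_fg_char} by Taylor-expanding the discrete $p$-energies along a recovery sequence for $f$ and using the $\Gamma$-liminf inequality to squeeze both $\liminf_n I_n$ and $\limsup_n I_n$ to the value $\mathcal{E}_p(f,g)$.

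\emph{Existence.} Set $\phi(t):=\mathcal{E}_p(f+tg)$. The previously-established convexity of $\mathcal{E}_p$ makes $\phi$ a finite convex function on $\mathbb{R}$, so its one-sided derivatives $\phi'_+(0)$ and $\phi'_-(0)$ exist and are finite. Since
\[
\frac{\phi(t)+\phi(-t)-2\phi(0)}{t}=\phi'_+(0)-\phi'_-(0)+o(1)\quad\text{as } t\to 0^+,
\]
Lemma~\ref{L:same_lim_RL} forces $\phi'_+(0)=\phi'_-(0)$, and the two-sided limit defining $\mathcal{E}_p(f,g)$ exists in $\mathbb{R}$.

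\emph{Characterization.} Fix a recovery sequence $\{f_n\}$ for $f$ from Theorem~\ref{T:KS_as_Gamma}, so $f_n\xrightarrow{L^p}f$ and $E_{p,r_n}(f_n)\to\mathcal{E}_p(f)$, and let $\{g_n\}\subset L^p(X,\mu)$ converge to $g$ in $L^p$ with $\sup_n E_{p,r_n}(g_n)<\infty$ (otherwise the right-hand side of~\eqref{E:Ep_fg_char} is not meaningful; any recovery sequence for $g$ verifies this). Applying the Taylor expansion of $|u+tv|^p$ used in the proof of Lemma~\ref{L:same_lim_RL} with $u=f_n(x)-f_n(y)$ and $v=g_n(x)-g_n(y)$ and integrating $\frac{1}{r_n^p}\fint_{B(x,r_n)}d\mu(y)\,d\mu(x)$ gives
\[
E_{p,r_n}(f_n\pm tg_n)=E_{p,r_n}(f_n)\pm pt\,I_n+t^2 R_n^\pm(t),
\]
where $I_n$ is exactly the integral on the right of~\eqref{E:Ep_fg_char} and, as in Lemma~\ref{L:same_lim_RL}, a Hölder estimate yields $\sup_{n,\,|t|\le 1}|R_n^\pm(t)|<\infty$. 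Since $f_n\pm tg_n\xrightarrow{L^p}f\pm tg$, the $\Gamma$-liminf inequality applied to the two sign choices yields
\[
\mathcal{E}_p(f+tg)\le\mathcal{E}_p(f)+pt\,\liminf_n I_n+O(t^2),\qquad \mathcal{E}_p(f-tg)\le\mathcal{E}_p(f)-pt\,\limsup_n I_n+O(t^2).
\]
Dividing by $\pm pt$, sending $t\to 0^+$, and invoking the equality $\phi'_+(0)=\phi'_-(0)$ established above, the first inequality gives $\mathcal{E}_p(f,g)\le\liminf_n I_n$ and the second $\mathcal{E}_p(f,g)\ge\limsup_n I_n$. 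Hence $\lim_n I_n$ exists and equals $\mathcal{E}_p(f,g)$.

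The main obstacle is the uniform-in-$n$ control of the remainder $R_n^\pm(t)$, which is most delicate in the range $1<p<2$ where $x\mapsto|x|^p$ fails to be $C^2$ at the origin. This is resolved via the case-specific Taylor bounds already used in Lemma~\ref{L:same_lim_RL} ($|R_1|\le C|v|^p$ for $1<p<2$; $|R_1|\le C|v|^2(|u|^{p-2}+|v|^{p-2})$ for $p\ge 2$), which after integration and Hölder's inequality reduce to the uniformly bounded quantities $E_{p,r_n}(f_n)$ and $E_{p,r_n}(g_n)$.
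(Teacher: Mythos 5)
Your proof is correct and follows essentially the same route as the paper's: Taylor expansion of $|u+tv|^p$ along a recovery sequence for $f$, the $\Gamma$-liminf inequality applied to $f_n\pm tg_n$, and Lemma~\ref{L:same_lim_RL} to squeeze $\limsup_n I_n\le\mathcal{E}_p(f,g)\le\liminf_n I_n$. Your two refinements --- deriving existence of the derivative directly from convexity of $t\mapsto\mathcal{E}_p(f+tg)$, and restricting to sequences $g_n$ with $\sup_n E_{p,r_n}(g_n)<\infty$ --- are both sound, and the second in fact makes explicit a hypothesis that the paper's own remainder bound $|R_n(t)|\le C\bigl(E_{p,r_n}(f_n)^{1-2/p}E_{p,r_n}(g_n)^{2/p}+E_{p,r_n}(g_n)\bigr)$ silently requires.
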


\begin{proof}
The proof follows the same lines as before and we start with the case $p \ge 2$. For $x,y \in \mathbb{R}$ and $t \in [-1,1]$,
\begin{equation}\label{E:Taylor_abs_p_02}
|x+ty|^p=|x|^p+p x y  |x|^{p-2} t+t^2R_1(t,x,y),
\end{equation}
where $|R_1(t,x,y)| \le C |y|^2(|x|^{p-2}+|y|^{p-2})$. Consider now $f,g \in KS^{1,p}(X)$. By definition of $\Gamma$-limit, there exists $\{f_n\}_{n\geq 1}\subset L^p(X,\mu)$ such that 
\begin{equation}\label{E:Ep_fg_char_01}
    \mathcal{E}_p(f)=\lim_{n\to\infty}E_{p,r_n}(f_n)
\end{equation}
and for any sequence $\{g_n\}_{n\geq 1}$ with $g_n\xrightarrow{L^p}g$ it holds that
\begin{equation}\label{E:Ep_fg_char_02}
    \mathcal{E}_p(g)\leq\liminf_{n\to\infty}E_{p,r_n}(g_n).
\end{equation}
Moreover, $f_n+tg_n\xrightarrow{L^p}f+tg$, whence~\eqref{E:Ep_fg_char_02} also holds for $f+tg$. Together with~\eqref{E:Taylor_abs_p_02} that yields
\begin{align}
\mathcal{E}_p (f+tg) & \le \liminf_{n \to +\infty} E_{p,r_n}(f_n+t g_n) \notag\\
&=\liminf_{n \to +\infty} \big(E_{p,r_n}(f_n)+tpE_{p,r_n}(f_n,g_n)+t^2R_n(t)\big),\label{E:Ep_fg_char_03}
\end{align}
where
\[
E_{p,r_n}(f_n,g_n):= \frac{1}{r_n^{p\alpha_p}}\int_X \fint_{B(x,r_n)}|f_n(y)-f_n(x)|^{p-2} (f_n(x)-f_n(y))(g_n(x)-g_n(y))d\mu(y)d\mu(x)
\]
and $R_n(t)$ has the property that $\sup_n |R_n(t)|$ is bounded in $t$. Therefore, it follows from~\eqref{E:Ep_fg_char_03} and~\eqref{E:Ep_fg_char_01} that
\[
\mathcal{E}_p(f+tg) \le \mathcal{E}_p(f)+p t \liminf_{n \to +\infty}E_{p,r_n}(f_n,g_n)+t^2\sup_n |R_n(t)|
\]
and thus
\begin{equation}\label{E:Ep_fg_char_04}
\frac{1}{p} \limsup_{t \to 0} \frac{\mathcal{E}_p (f+tg) - \mathcal{E}_p (f)}{t} \le \liminf_{n \to +\infty}E_{p,r_n} (f_n,g_n). 
\end{equation}
Substituting $g$ by $-g$ in the above yields
\[
\frac{1}{p} \limsup_{t \to 0^+} \frac{\mathcal{E}_p (f-tg) - \mathcal{E}_p (f)}{t} \le \liminf_{n \to +\infty}E_{p,r_n} (f_n,-g_n) 
\]
which together with Lemma~\ref{L:same_lim_RL} and~\eqref{E:Ep_fg_char_04} implies
\begin{align*}
    \limsup_{n\to\infty}E_{p,r_n}(f_n,g_n)
    &=\frac{1}{p}\liminf_{n\to\infty}\limsup_{t\to 0^+}\frac{\mathcal{E}_p(f)-\mathcal{E}_p(f-tg)}{t}\\
    &=\frac{1}{p}\liminf_{n\to\infty}\limsup_{t\to 0^+}\frac{\mathcal{E}_p(f+tg)-\mathcal{E}_p(f)}{t}\\
    &\leq\frac{1}{p}\limsup_{t\to 0^+}\frac{\mathcal{E}_p(f+tg)-\mathcal{E}_p(f)}{t}\\
    &\leq \liminf_{n\to\infty}E_{p,r_n}(f_n,g_n).
\end{align*}
Therefore, the limit $\lim_{t \to 0} \frac{\mathcal{E}_p (f+tg)-\mathcal{E}_p (f)}{t}$ exists and is given by~\eqref{E:Ep_fg_char} as required. The proof for $1<p<2$ is similar after using that for $x,y \in \mathbb{R}$ and $t \in [-1,1]$ we have
\[
|x+ty|^p=|x|^p+p y\,{\rm sign}(x) |x|^{p-1} t+t^{2}R_1(t,x,y),
\]
where the Taylor remainder $R_1(t,x,y)$ satisfies $|R_1(t,x,y)| \le C |y|^p$. 
\end{proof}

\begin{remark}\label{R:Non-linearity}
   From the limit expression~\eqref{E:Ep_fg_char} it follows that $\mathcal{E}_p(f,g)$ is not symmetric, it is linear in the second component and not in the first. However, it does satisfy 
   \[
   \mathcal{E}_p(f,f)=\mathcal{E}_p(f)
   \]
   because $\mathcal{E}_p(f+tf)=(1+t)^p \mathcal{E}_p(f)$.
\end{remark} 

\begin{cor}
Assume $p>1$. The form $(\mathcal{E}_p,KS^{1,p}(X))$ is local, that is $\mathcal{E}_p(f,g)=0$ if $f$ or $g$ are constant.
\end{cor}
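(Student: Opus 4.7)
The plan is to treat the two cases ($g$ constant, $f$ constant) separately, choosing in each case the more convenient of the two equivalent descriptions of $\mathcal{E}_p(f,g)$ provided by Lemma~\ref{L:def_Ep_fg}.

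If $g$ is constant, I would use the integral representation~\eqref{E:Ep_fg_char}. The key observation is that in Lemma~\ref{L:def_Ep_fg} the sequence $\{f_n\}$ is \emph{fixed} (supplied by the Lemma for $f$), while the identity~\eqref{E:Ep_fg_char} is valid for \emph{every} $L^p$-approximating sequence $\{g_n\}$ of $g$. Thus I pick $g_n \equiv g$, so that $g_n(x)-g_n(y) = 0$ for all $x,y$, which makes the integrand in~\eqref{E:Ep_fg_char} vanish identically and yields $\mathcal{E}_p(f,g)=0$.

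If $f$ is constant, the integral representation is less convenient (because the preferred sequence $\{f_n\}$ from Lemma~\ref{L:def_Ep_fg} need not be constant), so I would instead return to the derivative definition $\mathcal{E}_p(f,g) = \frac{1}{p}\lim_{t\to 0^+}\bigl(\mathcal{E}_p(f+tg)-\mathcal{E}_p(f)\bigr)/t$. The point is that each $E_{p,r}$ is translation-invariant and $p$-homogeneous, since $E_{p,r}(c+h) = E_{p,r}(h)$ and $E_{p,r}(th) = |t|^p E_{p,r}(h)$ for every constant $c$, scalar $t$, and $h \in L^p(X,\mu)$, as only the differences $h(x)-h(y)$ enter the defining integral. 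Applying the shift $h_n \mapsto c + h_n$ (respectively the rescaling $h_n \mapsto t h_n$) to any recovery sequence preserves $L^p$-convergence and the value of $E_{p,r_n}$ (resp.\ multiplies it by $|t|^p$), so both properties pass to the $\Gamma$-limit: $\mathcal{E}_p(c+h) = \mathcal{E}_p(h)$ and $\mathcal{E}_p(th) = |t|^p \mathcal{E}_p(h)$. In particular $\mathcal{E}_p(f) = 0$ and $\mathcal{E}_p(f+tg) = \mathcal{E}_p(tg) = t^p \mathcal{E}_p(g)$ for $t > 0$, so
\[
\mathcal{E}_p(f,g) = \frac{1}{p}\lim_{t \to 0^+} \frac{t^p\,\mathcal{E}_p(g)}{t} = \frac{1}{p}\lim_{t \to 0^+} t^{p-1}\mathcal{E}_p(g) = 0,
\]
where the last equality uses $p > 1$ and $\mathcal{E}_p(g) < \infty$ since $g \in KS^{1,p}(X)$.

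I do not anticipate any serious obstacle. The only point that requires a brief verification is that translation invariance and $p$-homogeneity of the approximating functionals transfer to the $\Gamma$-limit, and this follows directly from Definition~\ref{D:Gamma_limit} by shifting/rescaling recovery sequences.
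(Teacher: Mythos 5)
Your proof is correct; the paper itself states this corollary without proof, and your argument supplies a valid one. The case of constant $g$ is handled exactly as one would expect: plugging the constant sequence $g_n\equiv g$ into~\eqref{E:Ep_fg_char} kills the integrand. For constant $f$ you take a longer route through translation invariance and $p$-homogeneity of the $\Gamma$-limit; both properties do transfer from the approximating functionals exactly as you say (and homogeneity is already implicit in Remark~\ref{R:Non-linearity}, where $\mathcal{E}_p(f+tf)=(1+t)^p\mathcal{E}_p(f)$ is used), and the final limit $t^{p-1}\mathcal{E}_p(g)\to 0$ is where the hypothesis $p>1$ enters. A slightly shorter alternative for that second case stays within~\eqref{E:Ep_fg_char}: when $f$ is constant, the constant sequence $f_n\equiv f$ satisfies $E_{p,r_n}(f_n)=0=\mathcal{E}_p(f)$, so it is an admissible recovery sequence in Lemma~\ref{L:def_Ep_fg}, and the integrand in~\eqref{E:Ep_fg_char} again vanishes identically; this treats both cases symmetrically and avoids having to verify that translation invariance and homogeneity pass to the $\Gamma$-limit. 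The only cosmetic caveat, which concerns the statement rather than your proof, is that a nonzero constant belongs to $L^p(X,\mu)$ only when $\mu(X)<\infty$, so the hypothesis implicitly requires the constant to lie in $KS^{1,p}(X)$.
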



\subsection{$p$-Laplacian}\label{p laplace}

Throughout this section we assume $p>1$ and use the p-energy $\mathcal{E}_p$ to define an operator acting as $p$-Laplacian as follows: For $f \in KS^{1,p}(X)$ we say that $f$ is in the domain $\mathrm{dom} (\Delta_p)$ of the $p$-Laplacian $\Delta_p$ if there exists $h \in L^q(X,\mu)$, where $\frac{1}{p}+\frac{1}{q}=1$, such that for every $g \in KS^{1,p}(X)$,
\[
\mathcal{E}_p(f,g)=-\int_X h g\, d\mu.
\]
If such a function $h$ exists, it is necessarily unique and we define $\Delta_p f:=h$. As a consequence of Lemma~\ref{L:def_Ep_fg}, one obtains the following approximation of the $p$-Laplacian.

\begin{cor}
Let $p>1$, $f \in KS^{1,p}(X)$ and $\{f_n\}_{n\geq 1}\subset L^p(X,\mu)$ be a sequence such that $f_n\xrightarrow{L^p} f$ and $\lim_{n\to\infty}E_{p,r_n}(f_n)=\mathcal{E}_p (f)$. If the sequence of functions
\[
\Delta_p^n f_n(x):=- \int_{B(x,r_n)} \left(\frac{1}{\mu(B(x,r_n))} + \frac{1}{\mu(B(y,r_n))}\right) | f_n(x)-f_n(y)|^{p-2} (f_n(x)-f_n(y)) d\mu(y)
\]
is uniformly bounded in $L^q(X,\mu)$ with $\frac{1}{p}+\frac{1}{q}=1$, then $f \in \mathrm{dom} (\Delta_p)$ and $\Delta_p^n f_n$ converges weakly to $\Delta_p f$ in $L^q(X,\mu)$.
\end{cor}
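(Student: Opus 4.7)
The plan is to identify the pairing $-\int_X g\,\Delta_p^n f_n\,d\mu$ with the bilinear quantity $E_{p,r_n}(f_n,g)$ appearing in Lemma~\ref{L:def_Ep_fg} (with the constant recovery sequence $g_n\equiv g$), then to extract a weak $L^q$-limit and identify it with $\Delta_p f$.

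First I would carry out a Fubini symmetrization. The integration domain $\{(x,y):y\in B(x,r_n)\}$ is symmetric in $(x,y)$, and the kernel $|f_n(x)-f_n(y)|^{p-2}(f_n(x)-f_n(y))$ is antisymmetric under the swap $x\leftrightarrow y$. Splitting $g(x)-g(y)$ into its two summands and exchanging the order of integration in the summand involving $g(y)$ produces a symmetric combination of the two volume weights $1/\mu(B(x,r_n))$ and $1/\mu(B(y,r_n))$, yielding the identity
\[
E_{p,r_n}(f_n,g)\;=\;-\int_X g(x)\,\Delta_p^n f_n(x)\,d\mu(x) \qquad \text{for every } g\in L^p(X,\mu).
\]
The integrability needed to justify Fubini follows from H\"older's inequality applied with the $L^p$-control on $f_n$ and the assumed $L^q$-control on $\Delta_p^n f_n$.

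Next I would invoke Lemma~\ref{L:def_Ep_fg}. Since by hypothesis $f_n\to f$ in $L^p$ and $E_{p,r_n}(f_n)\to\mathcal{E}_p(f)$, the lemma applied to $\{f_n\}$ with the constant sequence $g_n\equiv g\in KS^{1,p}(X)$ gives $E_{p,r_n}(f_n,g)\to\mathcal{E}_p(f,g)$, and together with the identity above this reads
\[
\int_X g\,\Delta_p^n f_n\,d\mu\;\longrightarrow\;-\mathcal{E}_p(f,g)\qquad\text{for every }g\in KS^{1,p}(X).
\]
Because $p>1$ we have $1<q<\infty$ and $L^q(X,\mu)$ is reflexive, so the uniform $L^q$-bound combined with the Banach--Alaoglu theorem produces a subsequence $\Delta_p^{n_k}f_{n_k}$ converging weakly in $L^q(X,\mu)$ to a limit $h$. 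Passing to the limit in the display along this subsequence gives $\int_X g h\,d\mu=-\mathcal{E}_p(f,g)$ for all $g\in KS^{1,p}(X)$, which is precisely the definition of $f\in\mathrm{dom}(\Delta_p)$ with $\Delta_p f=h$.

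Finally, to upgrade subsequential weak convergence to weak convergence of the entire sequence, I would use that ${\rm Lip}_{\rm loc}(X)\cap C_c(X)\subset KS^{1,p}(X)$ is dense in $L^p(X,\mu)$ by Corollary~\ref{C:Lip_density}. Thus the pairings $g\mapsto -\mathcal{E}_p(f,g)$ on $KS^{1,p}(X)$ determine $h\in L^q(X,\mu)$ uniquely, so every weakly convergent subsequence must share the same limit $\Delta_p f$, and a standard sub-subsequence argument forces the full sequence to converge weakly to $\Delta_p f$. The main obstacle is the Fubini/symmetrization step: the particular symmetric combination of $1/\mu(B(x,r_n))+1/\mu(B(y,r_n))$ built into the definition of $\Delta_p^n f_n$ is exactly what is needed for the pairing with $g$ to reproduce the asymmetric pre-form of $\mathcal{E}_p(f,g)$, so this is where the whole identification hinges.
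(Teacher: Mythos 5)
Your argument is essentially identical to the paper's proof: the same Fubini/antisymmetry computation identifying $\int_X g\,\Delta_p^n f_n\,d\mu$ with $-E_{p,r_n}(f_n,g)$, the same appeal to Lemma~\ref{L:def_Ep_fg} to pass to $-\mathcal{E}_p(f,g)$, and the same weak-compactness extraction in the reflexive space $L^q(X,\mu)$ followed by a sub-subsequence argument (which you spell out in slightly more detail than the paper's ``by compactness''). The only caveat, which you share with the paper's own proof, is that the stated definition of $\Delta_p^n f_n$ lacks the $r_n^{-p}$ normalization needed for the pairing identity to match $E_{p,r_n}(f_n,g)$ exactly; this is evidently a typo in the statement rather than a flaw in your reasoning.
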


\begin{proof}
Since the sequence $\{\Delta_p^n f_n\}_{n\geq 1}$ is uniformly bounded in $L^q(X,\mu)$ by assumption, one can find a subsequence $\{\Delta_p^{n_k} f_{n_k}\}_{k\geq 1}$ that converges weakly to some $h\in L^q(X,\mu)$. Therefore, for every $g \in KS^{1,p}(X)$
\[
\lim_{k\to\infty}\int_X (\Delta_p^{n_k} f_{n_k} ) g d\mu = \int_X h g d\mu.
\]
Further, the definition of $\Delta_p^n$ implies that
\[
\int_X (\Delta_p^{n_k} f_{n_k} ) g d\mu=-\frac{1}{r_{n_k}^{p}} \int_X \fint_{B(x,r_{n_k})} |f_{n_k}(x)-f_{n_k}(y)|^{p-2} (f_{n_k}(x)-f_{n_k}(y))(g(x)-g(y))d\mu(y) d\mu(x),
\]
and Lemma~\ref{L:def_Ep_fg} thus yields
\[
\int_X (\Delta_p^{n_k} f_{n_k} ) g\, d\mu \to -\mathcal{E}_p(f,g).
\]
Hence, $f \in \mathrm{dom} (\Delta_p)$ and $h=\Delta_p f$. By compactness, $\Delta_p^n f_n$ converges weakly to $\Delta_p f$ in $L^q(X,\mu)$.
\end{proof}

\section{Korevaar-Schoen $p$-energy measures}\label{S:KS_pmeasures}
The aim of this section is to associate the $p$-energy introduced in Section~\ref{S:KS_pforms} with a Radon measure in such a way that, for each $f\in KS^{1,p}(X)$, the quantity $\mathcal{E}_p(f)$ may be viewed as the measure of the whole space $X$.

\subsection{Construction of the $p$-energy measures}
To apply the localization method explained in Section~\ref{SS:Gamma_convergence} we start by considering the localized energy functionals $E_{p,r}\colon L^p(X,\mu)\times \mathcal{O}\to \mathbb{R}$ given by
\begin{equation}\label{E:def_Epr_loc}
E_{p,r}(f,U):= \frac{1}{r^{p}}\int_U\fint_{B(x,r)} |f(y)-f(x)|^p d\mu(y) d\mu(x)
\end{equation}
where  $U\subset X$ is a an open set and as before $1\le p<\infty$, $r>0$,.

\begin{remark}\label{R:E_r_superad}
For fixed $r>0$ and $f\in L^p(X,\mu)$, the functional $E_{p,r}(f,\cdot)\colon \mathcal{O}\to[0,\infty]$ is a measure and in particular
\begin{enumerate}[wide=0em,label={\rm (\roman*)}]
\item $E_{p,r}(f,\cdot)$ is superadditive, 
\item $E_{p,r}(f,U)<\infty$ for any $U\subseteq X$ and $f\in KS^{1,p}(X)$.
\end{enumerate}
\end{remark}

As pointed out in Remark~\ref{R:basic_props_gamma_bar}, any sequence of functionals in $L^p(X,\mu)$ has a $\overline{\Gamma}$-convergent subsequence whose limit becomes the natural candidate for a $p$-measure.

\begin{defn}\label{D:def_pre_Gamma_p}
The $p$-energy functional $\Gamma_p\colon L^p(X,\mu)\times\mathcal{O}\to[0,\infty]$ is defined as
\begin{equation}\label{E:def_pre_Gamma_p}
    \Gamma_p:=\overline{\Gamma}{\textbf{-}}\lim_{n\to\infty}E_{p,r_n},
\end{equation}
where $\{E_{p,r_n}\}_{n\geq 1}$ is a $\overline{\Gamma}$-convergent subsequence.
\end{defn}

\begin{remark}
The subsequence $r_n$ in Definition~\ref{D:def_pre_Gamma_p} is a subsequence of the subsequence defined in Theorem~\ref{T:KS_as_Gamma} but we still denote it by $r_n$ to ease the notation.
\end{remark}

The $p$-energy measure associated with a function $f\in KS^{1,p}(X)$ will be denoted as $\Gamma_p(f):=\Gamma_p(f,\cdot)$. 
 
It follows readily from Definition \ref{D:def_Gamma_bar} that  for $f \in KS^{1,p}(X)$ and $U \subseteq X$
\[
\Gamma_p(f)(U) \le \liminf_{n\to +\infty} E_{p,r_n}(f_n,U) \le \mathcal{E}_p(f),
\]
where the sequence $f_n$ above is such that $f_n \xrightarrow{L^p}f$ and $\lim_{n\to +\infty} E_{p,r_n}(f_n,X)=\mathcal{E}_p(f)$.
 We now investigate further properties towards proving in Theorem~\ref{T:KS_as_Gamma} that $\Gamma_p$ is a Radon measure with $\Gamma_p(f)(X)=\mathcal{E}_p(f)$.
 
\begin{lem}\label{L:Gamma_locality}
The functional $\Gamma_p$ is local, that is for any $U\in\mathcal{O}$,
\begin{equation}\label{E:def_local_Gamma}
    \Gamma_p(f)(U)=\Gamma_p(g)(U)
\end{equation}
for all $f,g\in KS^{1,p}(X)$ with $f|_U=g|_U$ $\mu$-a.e.
\end{lem}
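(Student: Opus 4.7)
The plan is to exploit two structural properties of $\overline{\Gamma}$-limits recorded in Remark \ref{R:basic_props_gamma_bar}, namely monotonicity in the open-set variable and inner regularity, combined with a cut-and-paste construction that glues a recovery sequence for $g$ on a slight enlargement of $V$ with the original function $f$ outside. Since the hypothesis is symmetric in $f$ and $g$, it suffices to establish $\Gamma_p(f)(U) \le \Gamma_p(g)(U)$, and by inner regularity of $\Gamma_p(f)$ this further reduces to showing
\[
\Gamma_p(f)(V) \le \Gamma_p(g)(U) \qquad \text{for every } V \Subset U.
\]

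Fix such $V$ and interpose a triple of nested open sets $V \Subset V' \Subset W \Subset U$. The intermediate set $W$ is needed because Definition \ref{D:def_Gamma_bar}(ii) only yields recovery sequences on strict pairs $V \Subset W$; applied to $g$ it furnishes $g_n \xrightarrow{L^p} g$ with $\limsup_{n} E_{p,r_n}(g_n, V) \le \Gamma_p(g)(W)$. The set $V'$ serves as a buffer absorbing the non-locality of the Korevaar--Schoen integrand, which probes values of its argument in balls of radius $r_n$. I would then define the modified sequence
\[
f_n := g_n \, \mathbf{1}_{V'} + f \, \mathbf{1}_{X \setminus V'}.
\]
Because $f = g$ $\mu$-a.e. on $U \supset V'$, one has $|f_n - f| = |g_n - g|\,\mathbf{1}_{V'}$ $\mu$-a.e., so $f_n \to f$ in $L^p(X,\mu)$.

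The next step is to note that as soon as $n$ is large enough that $r_n < \dist(V, X \setminus V')$, every ball $B(x,r_n)$ with $x \in V$ is contained in $V'$, on which $f_n \equiv g_n$; hence $E_{p,r_n}(f_n, V) = E_{p,r_n}(g_n, V)$ for all such $n$. Applying the liminf part of $\overline{\Gamma}$-convergence, Definition \ref{D:def_Gamma_bar}(i), to the sequence $f_n \to f$ on the open set $V$ yields
\[
\Gamma_p(f)(V) \le \liminf_{n\to\infty} E_{p,r_n}(f_n, V) = \liminf_{n\to\infty} E_{p,r_n}(g_n, V) \le \Gamma_p(g)(W) \le \Gamma_p(g)(U),
\]
the last inequality being monotonicity. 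Taking the supremum over $V \Subset U$ and invoking inner regularity of $\Gamma_p(f)$ concludes $\Gamma_p(f)(U) \le \Gamma_p(g)(U)$, and exchanging the roles of $f$ and $g$ finishes the proof.

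The main technical delicacy to watch is that Definition \ref{D:def_Gamma_bar}(ii) does not permit $W = V$, so one cannot hope to prove the set-level equality $\Gamma_p(f)(V) = \Gamma_p(g)(V)$ for individual $V \Subset U$ by this method; the bound must first be pushed up to $W$ and the loss absorbed at the end via inner regularity together with monotonicity. The buffer $V'$ inserted strictly between $V$ and $W$ is precisely what ensures that the cut-and-paste definition of $f_n$ does not disturb the integrand on $V$ once $n$ is large enough.
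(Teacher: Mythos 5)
Your proof is correct and follows essentially the same strategy as the paper's: a cut-and-paste modification of a recovery sequence (the paper glues a recovery sequence for $f$ on $U$ with $g$ outside $U$ and bounds $\Gamma_p(g)(A)$ for $A\Subset U$; you glue a recovery sequence for $g$ on a buffer set $V'$ with $f$ outside and bound $\Gamma_p(f)(V)$), combined with the observation that the localized energies agree on the inner set once $r_n$ is small, the liminf inequality, monotonicity, and inner regularity. The swap of roles between $f$ and $g$ and the extra intermediate set $W$ are immaterial since the statement is symmetric.
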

\begin{proof}
For a set $\Omega$ we recall the notation
\[
\Omega_r=\left\{ x \in X, d(x,\Omega) \le r  \right\}.
\]
Let $U\in\mathcal{O}$ and $f,g\in KS^{1,p}(X)$ be such that $f|_U=g|_U$ $\mu$-a.e. Further, let $A \Subset U$ and let $\{f_n\}_{n\geq 1}\subset L^p(X,\mu)$ be a sequence that converges to $f$ in $L^p(X,\mu)$ and
\[
 \limsup_{n \to +\infty} E_{p,r_n} (f_n)(A) \le  \Gamma_p(f)(U).
\]
For each $n\geq 1$ define the function $\hat{f}_n$ by
\begin{align*}
\hat{f}_n(x):=
\begin{cases}
\hat{f}_n(x)=f_n(x), \quad x \in U \\
\hat{f}_n(x)=g(x), \quad x \notin U.
\end{cases}
\end{align*}
Since $f_n\xrightarrow{L^p}f$ and $f=g$ a.e. on $U$, it follows that $\hat{f}_n\xrightarrow{L^p}g$. Therefore, by $\overline{\Gamma}$-convergence,
\[
\Gamma_p(g)(A) \le \liminf_{n \to +\infty} E_p (\hat{f}_n,r_n)(A).
\]
Further, since $ A \Subset U$, it holds that $A_{r_n} \subset U$ for $n$ large enough, whence $E_{p,r_n}(\hat{f}_n,A)=E_{p,r_n} (f_n,A)$ and thus
\[
\Gamma_p(g)(A) \le \liminf_{n \to +\infty} E_{p,r_n}(f_n,A) \le \Gamma_p(f)(U).
\]
Since the above holds for all $A \Subset U$, we deduce from the inner regularity of $\Gamma_p(g)$ that $\Gamma_p(g)(U) \le \Gamma_p(f)(U)$. Similar arguments show that $\Gamma_p(f)(U) \le \Gamma_p(g)(U)$.
\end{proof}

To prove that $\Gamma_p(f)$ in fact defines a Borel measure on $X$ in the next theorem, we rely on a particular characterization of measures that can be found in~\cite[Theorem 18.5]{DMas93}. One of the main ingredients to establish the result is a variation of what dal Maso calls the ``fundamental estimate'' in~\cite[Definition 18.2]{DMas93}, see Lemma~\ref{L:fundamental_estimate}.  The fact that the underlying space is $\sigma$-compact will allow to conclude that the measure is Radon, i.e. it is finite on compact sets, outer regular on Borel sets, and inner regular on open sets.


\begin{thm}\label{T:Gamma_measure}
Let $1<p<\infty$. For every $f\in KS^{1,p}(X)$, $\Gamma_{p}(f)$ defines a finite Radon measure on $X$ such that
\[
\Gamma_{p}(f)(X)=\mathcal{E}_p(f).
\]
Moreover, there exist $C_1,C_2>0$ such that for any $f\in KS^{1,p}(X)$ and any $U,V,W \in \mathcal{O}$ such that $U \Subset V \Subset W$,
    \begin{equation}\label{E:local_KS_comp_Gamma}
        C_1\limsup_{r\to 0}E_{p,r}(f,U)\leq \Gamma_p(f)(V)\leq \liminf_{n \to +\infty}E_{p,r_n}(f,V) \le   C_2\liminf_{r\to 0^+}E_{p,r}(f,W).
    \end{equation}
\end{thm}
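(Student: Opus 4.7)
Fix $f \in KS^{1,p}(X)$. The plan is to verify that the set function $U \mapsto \Gamma_p(f)(U)$ on $\mathcal{O}$ satisfies the hypotheses of the De Giorgi--Letta criterion \cite[Theorem 18.5]{DMas93}: increasing, inner regular, lower semicontinuous, superadditive on disjoint open sets, and subadditive. The first three are already given in Remark~\ref{R:basic_props_gamma_bar}, and the $\sigma$-compactness of $X$ will upgrade the extended Borel measure to a Radon one. Finiteness $\Gamma_p(f)(X)<\infty$ will follow from the identification with $\mathcal{E}_p(f)$. Thus only (i) superadditivity, (ii) subadditivity, (iii) the identification, and (iv) the local sandwich~\eqref{E:local_KS_comp_Gamma} remain.

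For (i), let $U_1,U_2\in\mathcal{O}$ be disjoint and pick $A_i\Subset U_i$. Since $A_1\cup A_2\Subset U_1\cup U_2$, the $\overline{\Gamma}$-limsup condition of Definition~\ref{D:def_Gamma_bar} supplies $f_n\xrightarrow{L^p}f$ with $\limsup_n E_{p,r_n}(f_n,A_1\cup A_2)\le \Gamma_p(f)(U_1\cup U_2)$. Because each $E_{p,r_n}(f_n,\cdot)$ is an integral over the outer variable of a non-negative function, it is additive on disjoint Borel sets; combining with the $\overline{\Gamma}$-liminf condition on each $A_i$ yields $\Gamma_p(f)(A_1)+\Gamma_p(f)(A_2)\le \Gamma_p(f)(U_1\cup U_2)$, and taking $\sup$ over $A_i\Subset U_i$ with inner regularity closes the argument.

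Step (ii) is the main technical obstacle and rests on a fundamental estimate in the sense of \cite[Definition 18.2]{DMas93}. Given open $U,V$, pick recovery sequences $u_n,v_n\xrightarrow{L^p}f$ for $\Gamma_p(f)$ on suitable interior enlargements of $U$ and $V$, together with a Lipschitz cutoff $\chi$ adapted to interior sets $U'\Subset U$, $V'\Subset V$. The glued sequence $h_n:=\chi u_n+(1-\chi)v_n\xrightarrow{L^p}f$ satisfies
\[
|h_n(x)-h_n(y)|\le |u_n(x)-u_n(y)|+|v_n(x)-v_n(y)|+\|{\rm Lip}\,\chi\|_\infty\,d(x,y)\,|u_n(y)-v_n(y)|,
\]
so the convexity inequality $(a+b+c)^p\le 3^{p-1}(a^p+b^p+c^p)$, Fubini, and the doubling condition (as in Proposition~\ref{P:for_local_Kumagai_Sturm}) yield, for every $\eta>0$,
\[
E_{p,r_n}(h_n,U'\cup V')\le (1+\eta)\bigl(E_{p,r_n}(u_n,U)+E_{p,r_n}(v_n,V)\bigr)+M_\eta\|u_n-v_n\|_{L^p(X,\mu)}^p+o_n(1);
\]
the $r_n^{-p}$ prefactor is absorbed by the $d(x,y)^p\le r_n^p$ factor in the third term. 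Since $u_n-v_n\to 0$ in $L^p$, the $\overline{\Gamma}$-liminf condition applied to $h_n$ on $U'\cup V'$ delivers $\Gamma_p(f)(U'\cup V')\le(1+\eta)(\Gamma_p(f)(U)+\Gamma_p(f)(V))$; letting $\eta\to 0$ and exhausting $U,V$ by inner regularity completes subadditivity. The delicate point is the uniform control of the transition-strip integral, where the Lipschitz approximation toolkit of Section~\ref{SS:Observations} and the $(p,p)$-Poincar\'e inequality enter; this is where the restriction $p>1$ is used, through H\"older.

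For (iii), taking $U=X$ in Definition~\ref{D:def_Gamma_bar} reduces $\overline{\Gamma}$-convergence to plain $\Gamma$-convergence, so $\Gamma_p(f)(X)=\mathcal{E}_p(f)$ by Theorem~\ref{T:KS_as_Gamma}. For (iv), the middle inequality in~\eqref{E:local_KS_comp_Gamma} is simply the $\overline{\Gamma}$-liminf condition applied to the constant sequence $f_n\equiv f$. For the left inequality, interpose $U\Subset U_1\Subset V$ and use the $\overline{\Gamma}$-limsup condition to obtain $g_n\xrightarrow{L^p}f$ with $\limsup_n E_{p,r_n}(g_n,U_1)\le \Gamma_p(f)(V)$; Lemma~\ref{L:local_Kumagai-Sturm_cond} with domain $U$ and sequence $g_n$ then gives $E_{p,r}(f,U)\le C\liminf_n E_{p,r_n}(g_n,U_{\Lambda r})\le C\,\Gamma_p(f)(V)$ as soon as $U_{\Lambda r}\subset U_1$, so $\limsup_{r\to 0^+}$ delivers the claim with $C_1=1/C$. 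For the right inequality, apply Lemma~\ref{L:local_Kumagai-Sturm_cond} with domain $V$, constant sequence $f_n\equiv f$, and an auxiliary sequence $\varepsilon_n\to 0^+$ realising $\liminf_{\varepsilon\to 0^+}E_{p,\varepsilon}(f,W)$: for $r_m$ so small that $V_{\Lambda r_m}\subset W$,
\[
E_{p,r_m}(f,V)\le C\liminf_n E_{p,\varepsilon_n}(f,V_{\Lambda r_m})\le C\liminf_n E_{p,\varepsilon_n}(f,W)=C\liminf_{\varepsilon\to 0^+}E_{p,\varepsilon}(f,W),
\]
and passing to $\liminf_m$ yields~\eqref{E:local_KS_comp_Gamma} with $C_2=C$.
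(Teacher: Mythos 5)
Your overall architecture is the same as the paper's: verify the De Giorgi--Letta hypotheses, get superadditivity from the fact that each $E_{p,r_n}(f_n,\cdot)$ is a measure, get subadditivity from a fundamental estimate obtained by gluing recovery sequences with a cutoff, upgrade to Radon via $\sigma$-compactness, and derive the sandwich \eqref{E:local_KS_comp_Gamma} from Lemma~\ref{L:local_Kumagai-Sturm_cond} exactly as you do (trivial sequence for the right two inequalities, a recovery sequence on an interposed $U'$ for the left one). Those parts are fine.

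There is, however, one step that fails as written: in the subadditivity argument you split
\[
|h_n(x)-h_n(y)|\le |u_n(x)-u_n(y)|+|v_n(x)-v_n(y)|+\|{\rm Lip}\,\chi\|_\infty\, d(x,y)\,|u_n(y)-v_n(y)|
\]
and then invoke $(a+b+c)^p\le 3^{p-1}(a^p+b^p+c^p)$. That inequality only yields
\[
E_{p,r_n}(h_n,U'\cup V')\le 3^{p-1}\bigl(E_{p,r_n}(u_n,U)+E_{p,r_n}(v_n,V)\bigr)+M\|u_n-v_n\|_{L^p}^p,
\]
i.e.\ subadditivity up to the fixed constant $3^{p-1}>1$, which is \emph{not} enough for De Giorgi--Letta: you need the prefactor to be made arbitrarily close to $1$, and your claimed $(1+\eta)$ bound does not follow from the tool you name. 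The paper's Lemma~\ref{L:fundamental_estimate} gets the sharp prefactor by keeping the first two terms as the genuine convex combination $\varphi(x)(f(x)-f(y))+(1-\varphi(x))(g(x)-g(y))$, bounding its $p$-th power by $\varphi(x)|f(x)-f(y)|^p+(1-\varphi(x))|g(x)-g(y)|^p$ (convexity of $t\mapsto |t|^p$, no constant lost), and separating the cutoff term via the weighted Young inequality $(a+b)^p\le(1-\varepsilon)^{1-p}a^p+\varepsilon^{1-p}b^p$; the factor $(1-\varepsilon)^{1-p}\to 1$ then plays the role of your $1+\eta$. With that replacement your argument goes through. Two smaller remarks: the restriction $p>1$ is not actually what makes the transition-strip term manageable (the same estimate works for $p=1$, and the paper uses $\Gamma_1(f)$ as a measure in Theorem~\ref{T:BV_case}); and your one-line justification of $\Gamma_p(f)(X)=\mathcal{E}_p(f)$ by ``taking $U=X$'' only gives the inequality $\Gamma_p(f)(X)\le\mathcal{E}_p(f)$ directly from the liminf condition, since the limsup condition of Definition~\ref{D:def_Gamma_bar} requires $U\Subset V$ and so does not apply with $U=V=X$ unless $X$ is compact --- though the paper's own proof is equally silent on this point.
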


\begin{remark}
As mentioned in Remark~\ref{R:KS_equiv_weakUG}, for $p=1$ the space $KS^{1,1}(X)$ coincides with the space of bounded variation functions $BV(X)$. The Radon measure $\Gamma_{1}(f)$ should then be interpreted as the BV measure of $f$ associated with the total variation $\mathcal E_1(f)$. We refer to Theorem~\ref{T:BV_case} for a comparison with the BV measures introduced by Miranda in~\cite{miranda}.
\end{remark}

\begin{proof}[Proof of~\ref{T:Gamma_measure}]
By virtue of ~\cite[Theorem 5.1]{GiorgiLetta}, see also~\cite[Theorem 5.1]{DMas93}, $\Gamma_{p}(f)$ defines a Borel measure if and only if it is subadditive, superadditive and inner regular. Subadditivity is proved in Proposition~\ref{P:Gamma_subadditive}, while inner regularity follows from the definition of $\overline{\Gamma}$-convergence, cf. Remark~\ref{R:basic_props_gamma_bar}. Further, since $E_{p,r_n}(f,\cdot)$ is a measure for any $f\in KS^{1,p}(X)$ and $r_n>0$, it is superadditive and thus its $\overline{\Gamma}$-limit also is, c.f.~\cite[Proposition 16.12]{DMas93}.

\medskip

Note also that for any compact $K\subset X$ and $f\in KS^{1,p}(X)$ we have $\Gamma_{p}(f)(K)<\infty$. Since the underlying space $X$ is complete and $\sigma$-compact, 
it follows from~\cite[Theorem 2.18]{Rud87} that $\Gamma_p(f)$ is (in particular outer) regular.

\medskip

Consider now $f\in KS^{1,p}(X)$ and $U,V,W \in \mathcal{O}$ with $U \Subset V \Subset W$. From the characterization of $\overline{\Gamma}$-limits in Definition~\ref{D:def_Gamma_bar}, choosing the trivial sequence $f_n=f$ and the set $V\in\mathcal{O}$, the second and third inequality in~\eqref{E:local_KS_comp_Gamma} follow from Lemma ~\ref{L:local_Kumagai-Sturm_cond}.
To prove the first inequality, recall also from Definition~\ref{D:def_Gamma_bar} that there exists $\{f_n\}_{n\geq 1}$ converging strongly to $f$ in $L^p(X,\mu)$ with
\begin{equation}
\limsup_{n\to\infty}E_{p,r_n}(f_n,U') \leq \Gamma_p(f)(V),
\end{equation}
where $U'\in\mathcal{O}$ is such that $U \Subset U' \Subset V$. Applying Lemma~\ref{L:local_Kumagai-Sturm_cond} to that sequence and $U$, the desired inequality follows since for $r>0$ small enough $U_r \subset U'$.
\end{proof}

The next lemma corresponds to dal Maso's ``fundamental estimate'', which holds uniformly for the subsequence defining $\Gamma_p$ in~\eqref{E:def_pre_Gamma_p}.

\begin{lem}\label{L:fundamental_estimate}
For any $A,A',B\in\mathcal{O}$ with $A'\Subset A$ there exists a continuous cutoff function $\varphi$ with $0\leq\varphi\leq 1$, $\varphi|_{A'}\equiv 1$ and $\supp\varphi\subset A$ such that 
    \begin{align}\label{E:fundamental_estimate}
    E_{p,r}(\varphi f+(1-\varphi)g)(A'\cup B)&\leq (1-\varepsilon)^{1-p}  \big(E_{p,r}(f)(A)+E_{p,r}(g)(B)\big)+C \varepsilon^{1-p}\int_{S_r}|f-g|^pd\mu,
\end{align}
where
\[
S_r:=(A'\cup B)_r\cap  (A\setminus A')_{3r},
\]
for any $0<\varepsilon <1$, $r >0$ and $f,g\in L^p(X,\mu)$. 
The constant $C>0$ above depends only on $A,A'$ and the doubling constant of $X$.
\end{lem}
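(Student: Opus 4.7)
The plan is to construct a Lipschitz cutoff $\varphi$ adapted to the pair $A' \Subset A$, then decompose the increment of $h := \varphi f + (1-\varphi)g$ algebraically into a \emph{bulk} contribution (depending on the individual energies of $f$ on $A$ and of $g$ on $B$) and a \emph{cutoff} contribution (concentrated near the transition region $A\setminus A'$ and weighted by $|f-g|$), and finally separate their $p$-th powers through a convexity-based $\varepsilon$-Young inequality. The factor $(1-\varepsilon)^{1-p}$ will come from the bulk term and $\varepsilon^{1-p}$ from the cutoff term.

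First I would take $\eta := \tfrac{1}{2}\dist(\overline{A'},A^c)>0$ (or any positive number if $A=X$) and define $\varphi(x) := \max\bigl(0,\,1-\dist(x,A')/\eta\bigr)$, which is $L$-Lipschitz with $L:=1/\eta$, supported in $A$, identically $1$ on $A'$, and depends only on $A, A'$. The algebraic identity
\begin{equation*}
h(x)-h(y) = \bigl[\varphi(x)(f(x)-f(y)) + (1-\varphi(x))(g(x)-g(y))\bigr] + (\varphi(x)-\varphi(y))(f(y)-g(y)),
\end{equation*}
combined with the inequality $(a+b)^p \le (1-\varepsilon)^{1-p} a^p + \varepsilon^{1-p} b^p$ (Jensen's inequality applied with weights $1-\varepsilon$ and $\varepsilon$ to the convex function $t\mapsto t^p$), splits $|h(x)-h(y)|^p$ into a bulk piece with prefactor $(1-\varepsilon)^{1-p}$ and a cutoff piece with prefactor $\varepsilon^{1-p}$.

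For the bulk piece, convexity of $t\mapsto |t|^p$ together with $\varphi(x)+(1-\varphi(x))=1$ yields the pointwise bound
\begin{equation*}
\bigl|\varphi(x)(f(x)-f(y))+(1-\varphi(x))(g(x)-g(y))\bigr|^p \le \varphi(x)|f(x)-f(y)|^p + (1-\varphi(x))|g(x)-g(y)|^p.
\end{equation*}
Because $\varphi(x)\le \mathbf{1}_A(x)$ everywhere and $1-\varphi(x)\le \mathbf{1}_B(x)$ on $A'\cup B$ (the latter using $\varphi|_{A'}\equiv 1$), integrating over $x\in A'\cup B$, averaging over $y\in B(x,r)$, and dividing by $r^p$ produces precisely $E_{p,r}(f)(A) + E_{p,r}(g)(B)$.

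The main obstacle is the cutoff piece. The Lipschitz bound gives $|\varphi(x)-\varphi(y)|^p \le L^p d(x,y)^p \le L^p r^p$ for $y\in B(x,r)$, which absorbs the leading $r^{-p}$. The key additional input is a support reduction: $\varphi(x)\ne\varphi(y)$ forces at least one of $x,y$ to lie in the transition strip $A'_\eta\setminus A' \subset A\setminus A'$, so the integrand is supported on pairs with $x\in A\setminus A'$ or $y\in A\setminus A'$. Applying Fubini and using the standard doubling consequence $\int_{B(y,r)}\mu(B(x,r))^{-1}d\mu(x)\le C$ converts the averaged double integral into an $L^1$-integral in $y$, and a short geometric check shows that in both cases the effective range of $y$ is contained in $(A'\cup B)_r \cap (A\setminus A')_{3r} = S_r$. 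The $3r$-inflation comfortably accommodates an $r$-displacement from $A\setminus A'$ of the ``$x$-variable" together with the $r$-separation of $y$ from $x$. Assembling the bulk and cutoff estimates yields~\eqref{E:fundamental_estimate}, with $C$ depending only on $L^p$ (hence on $A, A'$) and the doubling constant. The delicate point is executing the Fubini/doubling exchange while respecting the support restrictions so that the residual integration variable truly lands in $S_r$ rather than in a larger enlargement of $A\setminus A'$.
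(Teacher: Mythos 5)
Your proposal is correct and follows essentially the same route as the paper's proof: the same distance-based Lipschitz cutoff depending only on $A,A'$, the same three-term algebraic decomposition of the increment, the same convexity inequality $(a+b)^p\le(1-\varepsilon)^{1-p}a^p+\varepsilon^{1-p}b^p$ for the bulk/cutoff split, the same pointwise convexity bound for the bulk term, and the same Lipschitz-plus-support-reduction argument followed by Fubini and doubling to land the cutoff term in $S_r$. No substantive differences.
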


\begin{remark}
The estimate in Lemma~\ref{L:fundamental_estimate} is slightly stronger than the original fundamental estimate by dal Maso in~\cite[Definition 18.2]{DMas93}. The latter only requires that for any $\varepsilon>0$ and any $A,A',B\in\mathcal{O}$ with $A'\Subset A$ there exists $C>0$ with the property that for every $f,g\in L^p(X,\mu)$ there is a function $\varphi\in{\rm cutoff}(A,A')$ for which~\eqref{E:fundamental_estimate} holds. In particular, $\varphi$ \emph{may} depend of $f,g$, while it does not in Lemma~\ref{L:fundamental_estimate}.
\end{remark}

\begin{proof}[Proof of Lemma~\ref{L:fundamental_estimate}]
Let $A,A',B\in\mathcal{O}$ with $A'\Subset A$ and let $0<\varepsilon<1$. For $x \in X$ consider
\[
\varphi (x):=\frac{\min \left\{ d(x,A^c), d(A',A^c) \right\} }{d(A',A^c)}.
\]
Note that $0\leq\varphi\leq 1$, $\varphi|_{A'}\equiv 1$ and $\supp\varphi\subset A$. Then,
\begin{align*}
 &   \int_{A'\cup B} \fint_{B(x,r)} | [ \varphi (y) f(y)+(1-\varphi (y))g(y)] -[\varphi (x) f(x)+(1-\varphi (x))g(x)]|^p d\mu(y) d\mu(x) \\
=   & \int_{A'\cup B} \fint_{B(x,r)} |\varphi (x) (f(x)-f(y)) +(1-\varphi(x))(g(x)-g(y)) +(\varphi(x)-\varphi(y))( f(y)-g(y))  |^p d\mu(y) d\mu(x).
\end{align*}
Applying the convexity inequality, it follows that
\[
(a+b)^p \le (1-\varepsilon)^{1-p} a^p+ \varepsilon^{1-p} b^p
\]
that is valid for $a,b \ge 0$ and $0< \varepsilon <1$ yields
\begin{align*}
 &   \int_{A'\cup B} \fint_{B(x,r)} | [ \varphi (y) f(y)+(1-\varphi (y))g(y)] -[\varphi (x) f(x)+(1-\varphi (x))g(x)]|^p d\mu(y) d\mu(x) \\
\le  &  (1-\varepsilon)^{1-p}  \int_{A'\cup B} \fint_{B(x,r)} |\varphi (x) (f(x)-f(y)) +(1-\varphi(x))(g(x)-g(y))  |^p d\mu(y) d\mu(x) \\
 &+ \varepsilon^{1-p}  \int_{A'\cup B} \fint_{B(x,r)} |(\varphi(x)-\varphi(y))( f(y)-g(y))  |^p d\mu(y) d\mu(x).
\end{align*}
The first term can be bounded again by convexity since
\begin{align*}
&  \int_{A'\cup B} \fint_{B(x,r)} |\varphi (x) (f(x)-f(y)) +(1-\varphi(x))(g(x)-g(y))  |^p d\mu(y) d\mu(x) \\
\le & \int_{A' \cup B} \fint_{B(x,r)} \left( \varphi (x) | f(x)-f(y)   |^p + (1-\varphi(x))  | g(x)-g(y)   |^p  \right)d\mu(y) d\mu(x) \\
\le & \int_{A} \fint_{B(x,r)}  | f(x)-f(y)   |^p d\mu(y) d\mu(x) + \int_{B} \fint_{B(x,r)}  | g(x)-g(y)   |^p d\mu(y) d\mu(x) .
\end{align*}
To bound the second term, we observe that for $x,y \in X$ with $d(x,y) \le r$, one has $\varphi(x)=\varphi(y)$ if $x \notin (A\setminus A')_{2r}$. 
The Lipschitz property of $\varphi$, Fubini theorem, and the volume doubling property finally imply
\begin{align*}
& \int_{A'\cup B} \fint_{B(x,r)} |(\varphi(x)-\varphi(y))( f(y)-g(y))  |^p d\mu(y) d\mu(x) \\
\le & C r^p \int_{(A'\cup B)\cap  (A\setminus A')_{2r} } \fint_{B(x,r)} | f(y)-g(y)  |^p d\mu(y) d\mu(x)   \\
\le & C r^p \int_{(A'\cup B)_r\cap  (A\setminus A')_{3r} } \int_{B(y,r)}\frac{ d\mu(x)}{\mu(B(x,r))}  |f(y)-g(y)  |^p d\mu(y)  \\
\le & C r^p  \int_{(A'\cup B)_r\cap  (A\setminus A')_{3r} }  |f(y)-g(y)  |^p d\mu(y).
\end{align*}
\end{proof}

The next lemma records a consequent estimate that will be used to prove the subadditivity of $\Gamma_p(f)$ in Proposition~\ref{P:Gamma_subadditive}.

\begin{lem}\label{L:est4subad}
For any $A',A'',B\in\mathcal{O}$ with $A'\Subset A''$ and $f\in KS^{1,p}(X)$,
\begin{equation*}
\Gamma{\text{-}}\limsup_{n \to +\infty} E_{p,r_n} (f,A'\cup B)\leq \Gamma{\text{-}}\limsup_{n \to +\infty} E_{p,r_n}(f,A'')+\Gamma {\text{-}}\limsup_{n \to +\infty} E_{p,r_n} (f,B).
\end{equation*}
\end{lem}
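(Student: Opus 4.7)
The natural tool is Lemma~\ref{L:fundamental_estimate}. For any $\eta>0$, by definition of $\Gamma\text{-}\limsup$, I can select recovery sequences $\{f_n\}_n,\{g_n\}_n\subset L^p(X,\mu)$ with $f_n,g_n\xrightarrow{L^p}f$ such that
\begin{align*}
\limsup_{n\to\infty}E_{p,r_n}(f_n,A'')&\le \Gamma{\text{-}}\!\!\limsup_{n\to\infty}E_{p,r_n}(f,A'')+\eta,\\
\limsup_{n\to\infty}E_{p,r_n}(g_n,B)&\le \Gamma{\text{-}}\!\!\limsup_{n\to\infty}E_{p,r_n}(f,B)+\eta.
\end{align*}
Applying Lemma~\ref{L:fundamental_estimate} with the triple $(A',A'',B)$ then furnishes, for each fixed $0<\varepsilon<1$, a single cutoff function $\varphi$ with $\varphi|_{A'}\equiv 1$ and $\supp\varphi\subset A''$ that depends only on $(A',A'')$ --- crucially not on $n$, $f_n$, or $g_n$ --- and for which
\begin{align*}
E_{p,r_n}\bigl(\varphi f_n+(1-\varphi)g_n,\,A'\cup B\bigr)
&\le (1-\varepsilon)^{1-p}\bigl(E_{p,r_n}(f_n,A'')+E_{p,r_n}(g_n,B)\bigr)\\
&\quad+C\varepsilon^{1-p}\int_{S_{r_n}}|f_n-g_n|^p\,d\mu
\end{align*}
holds for every $n$, where $S_r=(A'\cup B)_r\cap(A''\setminus A')_{3r}$.

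Next I would set $h_n:=\varphi f_n+(1-\varphi)g_n$. Since $0\le\varphi\le 1$ is independent of $n$, linearity gives $h_n\xrightarrow{L^p}\varphi f+(1-\varphi)f=f$. Moreover the error term in the fundamental estimate vanishes in the limit: since $f_n,g_n\to f$ in $L^p(X,\mu)$,
\[
\int_{S_{r_n}}|f_n-g_n|^p\,d\mu\le \|f_n-g_n\|_{L^p(X,\mu)}^p\xrightarrow[n\to\infty]{}0,
\]
regardless of how $S_{r_n}$ behaves. Taking $\limsup_{n\to\infty}$ in the displayed estimate and using $h_n$ as an admissible sequence in the definition of $\Gamma\text{-}\limsup$ at $f$ yields
\begin{align*}
\Gamma{\text{-}}\!\!\limsup_{n\to\infty}E_{p,r_n}(f,A'\cup B)
&\le \limsup_{n\to\infty}E_{p,r_n}(h_n,A'\cup B)\\
&\le (1-\varepsilon)^{1-p}\Bigl(\Gamma{\text{-}}\!\!\limsup_{n\to\infty}E_{p,r_n}(f,A'')+\Gamma{\text{-}}\!\!\limsup_{n\to\infty}E_{p,r_n}(f,B)+2\eta\Bigr).
\end{align*}
Letting first $\eta\to 0^+$ and then $\varepsilon\to 0^+$ produces the claimed inequality.

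The main (and essentially only) delicate point is that a classical dal Maso–style fundamental estimate, in which the cutoff $\varphi$ is allowed to depend on the pair $(f_n,g_n)$, would be insufficient here: with an $n$-dependent $\varphi_n$ one loses control of the $L^p$-convergence $h_n\to f$ and can no longer use $h_n$ as an admissible sequence for $\Gamma\text{-}\limsup$ at $f$. The strengthened form of Lemma~\ref{L:fundamental_estimate} highlighted in the remark following its statement --- in which $\varphi$ depends only on $A'$ and $A''$ --- is exactly what bypasses this obstruction and makes the argument go through cleanly.
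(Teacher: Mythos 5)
Your proof is correct and follows essentially the same route as the paper: extract (near-)optimal recovery sequences for $A''$ and $B$, glue them with the cutoff from Lemma~\ref{L:fundamental_estimate}, note that the error term vanishes because $\|f_n-g_n\|_{L^p(X,\mu)}\to 0$, use the glued sequence as an admissible competitor for the $\Gamma\text{-}\limsup$ at $f$, and let $\varepsilon\to 0$. One small quibble with your closing commentary: an $n$-dependent cutoff $\varphi_n$ would \emph{not} destroy the convergence $h_n\to f$, since $0\le\varphi_n\le 1$ already gives $|h_n-f|\le|f_n-f|+|g_n-f|$ pointwise, so the uniformity of $\varphi$ is convenient but is not the step that saves the argument.
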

\begin{proof}
The characterization of $\Gamma {\text{-}}\limsup$ in~\cite[Proposition 8.1]{DMas93}, see also Definition~\ref{D:Gamma_limit}, provides for $A'',B\in\mathcal{O}$ as required the existence of sequences $\{g_n\}_{n\geq 1}$ and $\{h_n\}_{n\geq 1}$ in $L^p(X,\mu)$ such that both $g_n,h_n\xrightarrow{L^p}f$ and 
\begin{equation}\label{E:est4subad_01}
 \begin{aligned}
&\Gamma {\text{-}}\limsup E_{p,r_n}(f,A'')=\limsup_{n\to\infty}E_{p,r_n}(g_n,A'')\\
&\Gamma {\text{-}}\limsup E_{p,r_n}(f,B)=\limsup_{n\to\infty}E_{p,r_n}(h_n,B).
\end{aligned}
\end{equation}
Let $0<\varepsilon <1$ be arbitrary but fixed. By virtue of the fundamental estimate~\eqref{E:fundamental_estimate} there exists $\varphi\in{\rm cutoff}(A,A')$ such that
\begin{equation*}
E_{p,r_n}(\varphi g_n+(1-\varphi)h_n,A'\cup B)\leq (1-\varepsilon)^{1-p}\big[E_{p,r_n}(g_n,A'')+E_{r_n}(h_n,B)\big]+M\varepsilon^{1-p}\|g_n-h_n\|_{L^p(S_{r_n},\mu)}^p.
\end{equation*}
Since $\varphi g_n+(1-\varphi)h_n\xrightarrow{L^p} f$, the latter estimate together with the definition of $\Gamma$-limit and~\eqref{E:est4subad_01} yields
\begin{align*}
   \Gamma {\text{-}}\limsup_{n\to\infty}E_{p,r_n}(f,A'\cup B')
    &\leq   \Gamma {\text{-}}\limsup_{n\to\infty} E_{p,r_n}(\varphi g_n+(1-\varphi)h_n,A'\cup B) \\
    &\leq  (1-\varepsilon)^{1-p} \left( \Gamma {\text{-}}\limsup_{n\to\infty}E_{p,r_n}(f,A'')+ \Gamma {\text{-}}\limsup_{n\to\infty}E_{p,r_n}(f,B) \right).
\end{align*}
By letting $\varepsilon \to 0$ the assertion of the lemma follows.
\end{proof}

\begin{prop}\label{P:Gamma_subadditive}
For any $f\in KS^{1,p}(X)$, the functional $\Gamma_p(f)$ is subadditive, i.e. for any $A,B\in\mathcal{O}$,
\begin{equation*}
\Gamma_p(f)(A\cup B)\leq \Gamma_p(f)(A)+\Gamma_p(f)(B).
\end{equation*}
\end{prop}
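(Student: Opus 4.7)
The strategy is to reduce to the estimate already provided by Lemma~\ref{L:est4subad} via inner regularity of $\Gamma_p(f)$ and a topological shrinking of the cover $\{A,B\}$. Concretely, since $\Gamma_p(f)$ is inner regular (Remark~\ref{R:basic_props_gamma_bar}), it suffices to show that
\[
\Gamma_p(f)(W)\le \Gamma_p(f)(A)+\Gamma_p(f)(B)\qquad\text{for every } W\Subset A\cup B,
\]
and then pass to the supremum over such $W$. The whole proof is a matter of threading the right set inclusions so that both Lemma~\ref{L:est4subad} and the two characterizing properties of $\overline{\Gamma}$-convergence (Definition~\ref{D:def_Gamma_bar}) apply.

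\textbf{Shrinking step.} Fix $W\Subset A\cup B$. Since $\overline{W}$ is compact and covered by the two open sets $A$ and $B$, a standard shrinking argument (e.g.\ via a continuous partition of unity on $\overline{W}$ subordinate to $\{A,B\}$) produces open sets $\tilde A\Subset A$ and $\tilde B\Subset B$ with $\overline{W}\subset \tilde A\cup \tilde B$. Pick an additional open set $A^{*}$ with $\tilde A\Subset A^{*}\Subset A$.

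\textbf{Applying Lemma~\ref{L:est4subad}.} With the choices $A'=\tilde A$, $A''=A^{*}$ and with the set $B$ of the lemma being our $\tilde B$, the estimate reads
\begin{equation*}
\Gamma\text{-}\limsup_{n\to\infty}E_{p,r_n}(f,\tilde A\cup\tilde B)\le \Gamma\text{-}\limsup_{n\to\infty}E_{p,r_n}(f,A^{*})+\Gamma\text{-}\limsup_{n\to\infty}E_{p,r_n}(f,\tilde B).
\end{equation*}

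\textbf{Upper bounds on the right.} Since $A^{*}\Subset A$, the limsup part of $\overline{\Gamma}$-convergence yields a sequence $f_n\xrightarrow{L^p}f$ with $\limsup_n E_{p,r_n}(f_n,A^{*})\le \Gamma_p(f)(A)$, and the definition of $\Gamma\text{-}\limsup$ as the infimum over recovery sequences gives $\Gamma\text{-}\limsup_n E_{p,r_n}(f,A^{*})\le \Gamma_p(f)(A)$. The same reasoning, applied to $\tilde B\Subset B$, gives $\Gamma\text{-}\limsup_n E_{p,r_n}(f,\tilde B)\le \Gamma_p(f)(B)$.

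\textbf{Lower bound on the left.} Since $W\subset \tilde A\cup\tilde B$, any recovery sequence $f_n\xrightarrow{L^p}f$ for $\Gamma\text{-}\limsup_n E_{p,r_n}(f,\tilde A\cup\tilde B)$ satisfies, by monotonicity of $E_{p,r_n}(f_n,\cdot)$ in the open set argument (Remark~\ref{R:E_r_superad}) and the liminf part of $\overline{\Gamma}$-convergence applied to $W$,
\[
\Gamma_p(f)(W)\le \liminf_{n\to\infty}E_{p,r_n}(f_n,W)\le \limsup_{n\to\infty}E_{p,r_n}(f_n,\tilde A\cup\tilde B).
\]
Taking the infimum over such sequences gives $\Gamma_p(f)(W)\le \Gamma\text{-}\limsup_n E_{p,r_n}(f,\tilde A\cup\tilde B)$. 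Chaining the three inequalities yields $\Gamma_p(f)(W)\le \Gamma_p(f)(A)+\Gamma_p(f)(B)$, and inner regularity of $\Gamma_p(f)$ applied to $A\cup B$ concludes the proof.

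The only mildly delicate step is the shrinking of the open cover $\{A,B\}$ of $\overline{W}$ into $\tilde A\Subset A$, $\tilde B\Subset B$ still covering $\overline{W}$, together with squeezing $A^*$ between $\tilde A$ and $A$; once that is in place everything else is bookkeeping between the $\Gamma\text{-}\limsup$ notation used in Lemma~\ref{L:est4subad} and the $\overline{\Gamma}$-limit $\Gamma_p$.
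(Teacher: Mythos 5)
Your proof is correct and follows essentially the same route as the paper's: both reduce via inner regularity to a compactly contained set, shrink the open cover $\{A,B\}$ to $\tilde A\Subset A^*\Subset A$ and $\tilde B\Subset B$ still covering it, apply Lemma~\ref{L:est4subad}, and then use the two halves of the $\overline{\Gamma}$-convergence characterization to compare the $\Gamma\text{-}\limsup$ quantities with $\Gamma_p(f)(A)$ and $\Gamma_p(f)(B)$. The only cosmetic difference is that the paper phrases the inner-regularity reduction via an arbitrary level $s<\Gamma_p(f)(A\cup B)$ and the inner regular envelope of the $\Gamma\text{-}\limsup$, while you take the supremum over $W\Subset A\cup B$ directly; these are equivalent.
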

\begin{proof}
We follow~\cite[Proposition 18.4]{DMas93}. Let $f \in KS^{1,p}(X)$ and $A,B \in \mathcal O$. By definition of $\overline{\Gamma}$-convergence, $\Gamma_p$ is in fact the inner regular envelope of the functional $\Gamma-\limsup_{n\to +\infty}E_{p,r_n}$, see e.g.~\cite[Definition 16.2]{DMas93}. Thus, for any $0<s<\Gamma_p(f)(A\cup B)$, there exists $C\in\mathcal{O}$ with the properties that $C\Subset A\cup B$ and 
\begin{equation*}
s<\Gamma {\text{-}}\limsup_{n\to\infty}E_{p,r_n}(f,C).
\end{equation*}
Since $C\Subset A\cup B$, there exist sets $A',A'',B'\in\mathcal{O}$ with $A'\Subset A''\Subset A$, $B'\Subset B$ and $C\Subset A'\cup B'$, whence from Lemma \ref{L:est4subad}
\begin{equation*}
    \Gamma {\text{-}}\limsup_{n\to\infty}E_{p,r_n}(f,C)\leq \Gamma {\text{-}}\limsup_{n\to\infty}E_{p,r_n}(f,A'\cup B')\le \Gamma {\text{-}}\limsup_{n\to\infty}E_{p,r_n}(f,A'')+\Gamma {\text{-}}\limsup_{n\to\infty}E_{p,r_n}(f,B').
\end{equation*}
Since $A''\Subset A$ and $B'\Subset B$ we have
\[
\Gamma {\text{-}}\limsup_{n\to\infty}E_{p,r_n}(f,A'') \le \Gamma_p(f)(A), \quad \Gamma {\text{-}}\limsup_{n\to\infty}E_{p,r_n}(f,B') \le \Gamma_p(f)(B).
\]
Therefore we obtain
\[
s \le \Gamma_p(f)(A) +\Gamma_p(f)(B)
\]
and the conclusion follows by letting $s$ converge to $\Gamma_p(f)(A\cup B)$.
\end{proof}

\subsection{Properties of $p$-energy measures}
This section collects several desirable properties for a $p$-energy measure which are extensions of those corresponding to the case $p=2$, see for instance~\cite[Section 3.2]{FOT11} and~\cite{Hin10}.

\begin{prop}\label{P:Minkowski}
For any $f,g\in KS^{1,p}(X)$, $a,b\in\mathbb{R}$ and $U\in\mathcal{O}$,
\begin{equation}\label{E:Minkowski_full}
    \Gamma_p(af+bg)(U)^{1/p}\leq |a|\Gamma_p(f)(U)^{1/p}+|b|\Gamma_p(g)(U)^{1/p}.
\end{equation}
\end{prop}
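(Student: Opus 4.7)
The plan is to lift the classical Minkowski inequality from the level of the approximating functionals $E_{p,r}$ to the $\overline{\Gamma}$-limit $\Gamma_p$. The starting observation is that for any fixed $r>0$ and $U\in\mathcal{O}$, the functional $h\mapsto E_{p,r}(h,U)^{1/p}$ is a seminorm on $L^p(X,\mu)$. Indeed, $E_{p,r}(h,U)$ equals the $p$-th power of the $L^p$-norm of the map $(x,y)\mapsto (h(y)-h(x))/r$ with respect to the finite Borel measure
\[
d\nu_{r,U}(x,y):=\mathbf{1}_U(x)\,\frac{\mathbf{1}_{B(x,r)}(y)}{\mu(B(x,r))}\,d\mu(y)\,d\mu(x)
\]
on $X\times X$, and that map depends linearly on $h$. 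The ordinary Minkowski inequality in $L^p(X\times X,\nu_{r,U})$ therefore gives
\[
E_{p,r}(af+bg,U)^{1/p}\le |a|\,E_{p,r}(f,U)^{1/p}+|b|\,E_{p,r}(g,U)^{1/p}
\]
for all $f,g\in L^p(X,\mu)$, $a,b\in\mathbb{R}$, $r>0$ and $U\in\mathcal{O}$.

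Second, I would pass this inequality to $\Gamma_p$ by coupling two independent recovery sequences. Fix open sets $U'\Subset V\Subset U$. By the recovery-sequence clause of Definition~\ref{D:def_Gamma_bar}, one can choose $\{f_n\},\{g_n\}\subset L^p(X,\mu)$ with $f_n\xrightarrow{L^p}f$, $g_n\xrightarrow{L^p}g$ and
\[
\limsup_{n\to\infty}E_{p,r_n}(f_n,U')\le\Gamma_p(f)(V),\qquad \limsup_{n\to\infty}E_{p,r_n}(g_n,U')\le\Gamma_p(g)(V).
\]
Since $af_n+bg_n\xrightarrow{L^p}af+bg$, the liminf clause of Definition~\ref{D:def_Gamma_bar} together with the Minkowski inequality for $E_{p,r_n}$ and the elementary bound $\liminf_n(\alpha_n+\beta_n)\le\limsup_n\alpha_n+\limsup_n\beta_n$ yield
\[
\Gamma_p(af+bg)(U')^{1/p}\le\liminf_{n\to\infty}E_{p,r_n}(af_n+bg_n,U')^{1/p}\le |a|\,\Gamma_p(f)(V)^{1/p}+|b|\,\Gamma_p(g)(V)^{1/p}.
\]

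The main obstacle in this scheme is the mismatch between the inner set $U'$ and the slightly larger outer set $V$ forced by the recovery-sequence definition: the estimate above is not yet the desired inequality with the common set $U$ on both sides. This gap is closed by invoking the two general properties of $\overline{\Gamma}$-limits recalled in Remark~\ref{R:basic_props_gamma_bar}. Monotonicity in the set variable gives $\Gamma_p(f)(V)\le\Gamma_p(f)(U)$ and $\Gamma_p(g)(V)\le\Gamma_p(g)(U)$, since $V\subset U$. Then inner regularity of $\Gamma_p(af+bg)$ allows us to take the supremum over open $U'\Subset U$ to obtain
\[
\Gamma_p(af+bg)(U)^{1/p}=\sup_{U'\Subset U}\Gamma_p(af+bg)(U')^{1/p}\le |a|\,\Gamma_p(f)(U)^{1/p}+|b|\,\Gamma_p(g)(U)^{1/p},
\]
which is precisely~\eqref{E:Minkowski_full}. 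Notably, this argument does not require $\Gamma_p$ to already be known to be a measure, so it is available for every $p\ge 1$.
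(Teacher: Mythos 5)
Your argument is correct and essentially the same as the paper's: both apply Minkowski's inequality to the approximating functionals $E_{p,r_n}(\cdot,U')^{1/p}$ along recovery sequences for $f$ and $g$ on an inner set $U'\Subset U$, pass to the limit via the liminf clause for $af_n+bg_n$, and then remove the inner set by inner regularity. The only (harmless) difference is in the last step, where you invoke the general inner regularity of $\overline{\Gamma}$-limits from Remark~\ref{R:basic_props_gamma_bar} while the paper cites the Radon property from Theorem~\ref{T:Gamma_measure}; your variant is marginally more self-contained.
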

\begin{proof}
    By the characterization of $\overline{\Gamma}$-convergence, there exist sequences $\{f_n\}_{n\geq 1}$ and $\{g_n\}_{n\geq 1}$ such that
    \begin{equation}\label{E:Mink_01}
    \begin{aligned}
        &\limsup_{n\to\infty}E_{p,r_n}(f_n,U')\leq \Gamma_p(f)(U)\\
        &\limsup_{n\to\infty}E_{p,r_n}(g_n,U')\leq \Gamma_p(g)(U)
    \end{aligned}
    \end{equation}
    for $U'\Subset U$. Further, since $af_n+bg_n\xrightarrow{L^p}af+bg$, it also holds that
    \begin{equation}\label{E:Mink_02}
    \Gamma_p(af+bg)(U')\leq\liminf_{n\to\infty}E_{p,r_n}(af_n+bg_n,U').
    \end{equation}
    By virtue of Minkowski's inequality,
    \begin{align*}
        E_{p,r_n}(af_n+bg_n,U')^{1/p}&=\bigg(\frac{1}{r_n^p}\int_{U'}\fint_{B(x,r_n)}|af_n(x)-f_n(y)+bg_n(x)-g_n(y)|^pd\mu(y)\,d\mu(x)\bigg)^{1/p}\\
        &\leq |a|E_{p,r_n}(f_n,U')^{1/p}+|b|E_{p,r_n}(g_n,U')^{1/p}.
    \end{align*}
    Taking $\limsup_{n\to\infty}$ on both sides of the inequality and applying~\eqref{E:Mink_01} and~\eqref{E:Mink_02} we arrive at
    \begin{equation}\label{E:Minkowski_part}
     \Gamma_p(af+bg)(U')^{1/p}\leq |a|\Gamma_p(f)(U)^{1/p}+|b|\Gamma_p(g)(U)^{1/p}.
    \end{equation}
    Finally, due to Theorem~\ref{T:Gamma_measure} we know that $\Gamma_p$ is Radon, whence~\eqref{E:Minkowski_part} in particular implies that
    \begin{align*}
    \Gamma_p(af+bg)(U)^{1/p}&=(\sup\{\Gamma_p(af+bg)(U')\colon U'\Subset U,\,U\text{ compact}\})^{1/p}\\
    &\leq |a|\Gamma_p(f)(U)^{1/p}+|b|\Gamma_p(g)(U)^{1/p}.
    \end{align*}
\end{proof}
The next properties will be especially relevant to prove the absolute continuity of $\Gamma_p(f)$ with respect to the underlying measure $\mu$ in Section~\ref{SS:absolute_cont}.

\begin{lem}\label{L:lin_abs_cont}
Let $f,g\in KS^{1,p}(X)$ and $a,b\in \mathbb{R}$. If $\Gamma_p(f)$ and $\Gamma_p(g)$ are absolutely continuous with respect to $\mu$, then also $\Gamma_p(af+bg)$ is.
\end{lem}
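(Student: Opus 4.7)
The plan is to reduce the statement to the Minkowski-type inequality \eqref{E:Minkowski_full} already established in Proposition~\ref{P:Minkowski}, combined with the Radon property of the $p$-energy measures from Theorem~\ref{T:Gamma_measure}.

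First I would fix a Borel set $N \subset X$ with $\mu(N)=0$; the goal is to show $\Gamma_p(af+bg)(N)=0$. Since $\Gamma_p(af+bg)$ is a finite Radon measure, it is outer regular, so it suffices to produce open sets $U_n \supset N$ with $\Gamma_p(af+bg)(U_n) \to 0$. By the outer regularity of the reference measure $\mu$, one can choose open sets $U_n \supset N$ with $\mu(U_n)\to 0$.

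Next I would exploit the hypothesis that $\Gamma_p(f)$ and $\Gamma_p(g)$ are finite measures absolutely continuous with respect to $\mu$. By the standard $\varepsilon$--$\delta$ characterization of absolute continuity for finite measures, this gives $\Gamma_p(f)(U_n)\to 0$ and $\Gamma_p(g)(U_n)\to 0$. Applying Proposition~\ref{P:Minkowski} on each open set $U_n$ yields
\[
\Gamma_p(af+bg)(U_n)^{1/p} \le |a|\,\Gamma_p(f)(U_n)^{1/p}+|b|\,\Gamma_p(g)(U_n)^{1/p},
\]
so that $\Gamma_p(af+bg)(U_n)\to 0$. Since $N\subset U_n$, monotonicity of the measure gives $\Gamma_p(af+bg)(N)=0$, which is the desired absolute continuity.

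There is no substantial obstacle here; the only minor point to keep track of is that Proposition~\ref{P:Minkowski} is stated for open sets, which is exactly why the outer regularity step has to come first. All other ingredients (finiteness, Radon, outer regularity) are already available from Theorem~\ref{T:Gamma_measure}.
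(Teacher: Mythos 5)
Your proof is correct and follows essentially the same route as the paper's, which also reduces the claim to the Minkowski-type inequality of Proposition~\ref{P:Minkowski} applied to $\mu$-null open sets. Your version is in fact slightly more careful, since the paper only treats open null sets directly and leaves implicit the outer-regularity step you spell out to pass to arbitrary Borel null sets.
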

\begin{proof}
    Let $U\in\mathcal{O}$ be such that $\mu(U)=0$. By assumption, also $\Gamma_p(f)(U)=0=\Gamma_p(g)(U)$, and $\Gamma_p(af)(U)=0=\Gamma_p(bg)(U)$. By virtue of Lemma~\ref{P:Minkowski}, $\Gamma_p(af+bg)(U)=0$ whence $\Gamma_p(af+bg)\ll\mu$.
\end{proof}

\begin{lem}\label{L:sequence_abs_cont}
Let $f\in KS^{1,p}(X)$ and $\{f_n\}_{n\geq 1}\subset KS^{1,p}(X)$ with $\mathcal{E}_p(f-f_n)\xrightarrow{n\to\infty} 0$. If $\Gamma_p(f_n)$ is absolutely continuous with respect to $\mu$, then also $\Gamma_p(f)$ is.
\end{lem}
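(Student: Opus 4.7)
The plan is to combine the Minkowski-type inequality from Proposition~\ref{P:Minkowski} applied to the decomposition $f = f_n + (f-f_n)$ with the outer regularity of the Radon measures $\Gamma_p(f_n)$ established in Theorem~\ref{T:Gamma_measure}.

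First I would observe that $f-f_n \in KS^{1,p}(X)$: taking $U=X$ in~\eqref{E:Minkowski_full} and using $\Gamma_p(h)(X)=\mathcal{E}_p(h)$ from Theorem~\ref{T:Gamma_measure} shows that $\mathcal{E}_p^{1/p}$ satisfies the triangle inequality, so the difference of two elements of $KS^{1,p}(X)$ is again in $KS^{1,p}(X)$, and in particular $\Gamma_p(f-f_n)$ is a finite Radon measure with total mass $\mathcal{E}_p(f-f_n)$. Applying Proposition~\ref{P:Minkowski} with $a=b=1$ to the pair $(f_n, f-f_n)$ then yields, for every open $U \subset X$,
\[
\Gamma_p(f)(U)^{1/p} \leq \Gamma_p(f_n)(U)^{1/p} + \Gamma_p(f-f_n)(U)^{1/p} \leq \Gamma_p(f_n)(U)^{1/p} + \mathcal{E}_p(f-f_n)^{1/p}.
\]

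Next, fix a Borel set $A \subset X$ with $\mu(A)=0$ and $\varepsilon>0$. Since $\Gamma_p(f_n) \ll \mu$ by hypothesis, $\Gamma_p(f_n)(A)=0$, and since $\Gamma_p(f_n)$ is a Radon measure (hence outer regular), there exists an open set $U \supset A$ with $\Gamma_p(f_n)(U) < \varepsilon$. Combining monotonicity of $\Gamma_p(f)$ with the displayed inequality above gives
\[
\Gamma_p(f)(A)^{1/p} \leq \Gamma_p(f)(U)^{1/p} \leq \varepsilon^{1/p} + \mathcal{E}_p(f-f_n)^{1/p}.
\]
Letting $\varepsilon \to 0^+$ and then $n \to \infty$, and using the assumption $\mathcal{E}_p(f-f_n) \to 0$, one concludes $\Gamma_p(f)(A)=0$, hence $\Gamma_p(f) \ll \mu$.

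There is no real obstacle here: the argument reduces absolute continuity to the triangle inequality for $\Gamma_p(\cdot)(U)^{1/p}$ and outer regularity. The only point to be careful about is that Proposition~\ref{P:Minkowski} is stated only for open sets, but this is exactly sufficient because outer regularity of the Radon measures $\Gamma_p(f_n)$ allows one to squeeze an arbitrary $\mu$-null Borel set $A$ inside an open set on which $\Gamma_p(f_n)$ is arbitrarily small.
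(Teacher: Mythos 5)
Your argument is correct and follows essentially the same route as the paper: decompose $f=(f-f_n)+f_n$, apply the Minkowski-type inequality of Proposition~\ref{P:Minkowski}, and bound $\Gamma_p(f-f_n)(U)$ by its total mass $\mathcal{E}_p(f-f_n)\to 0$. The only difference is that the paper tests absolute continuity directly on open $\mu$-null sets, while you additionally invoke outer regularity of $\Gamma_p(f_n)$ to handle arbitrary Borel null sets --- a harmless (indeed slightly more careful) refinement of the same proof.
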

\begin{proof}
   Let $U\in\mathcal{O}$ be such that $\mu(U)=0$. By assumption, also $\Gamma_p(f_n)(U)=0$ for all $n\geq 1$. By virtue of Proposition~\ref{P:Minkowski} and Theorem~\ref{T:KS_as_Gamma},
   \begin{align*}
       \Gamma_p(f)(U)^{1/p}&=\Gamma_p(f-f_n+f_n)(U)^{1/p}\leq \Gamma_p(f-f_n)(U)^{1/p}+\Gamma_p(f_n)(U)^{1/p}\\
       &=\Gamma_p(f-f_n)(U)^{1/p}\leq \Gamma_p(f-f_n)(X)^{1/p}\\
       &=\mathcal{E}_p(f-f_n)^{1/p}\xrightarrow{n\to\infty} 0.
   \end{align*}
\end{proof}

\subsection{$(p,p)$-Poincar\'e inequality with respect to the $p$-energy measure}\label{SS:pPI}
As it was the case when $p=2$, c.f.~\cite[Theorem 3.4]{ARB23}, the $(p,p)$-Poincar\'e inequality from Assumption~\ref{A:pPI_Lip} that is characteristic of Cheeger spaces, involves the Lipschitz constant of the function. In this section we show that the same equality will hold with the $p$-energy measure on the right hand side instead.

\begin{prop}\label{P:pPI_Gamma_p}
There exists $C>0$ and $\Lambda>1$ such that 
\begin{equation}\label{E:pPI_Gamma_p}
\int_{B(x,R)}|f(y)-f_{B(x,R)}|^pd\mu(z)\leq CR^p\int_{B(x,\Lambda R)}d\Gamma_p(f)
\end{equation}
for any $f\in KS^{1,p}(X)$, $x\in X$ and $R>0$.
\end{prop}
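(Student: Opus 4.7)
The plan is to reduce the desired $(p,p)$-Poincaré inequality with respect to $\Gamma_p(f)$ to the $(p,p)$-Poincaré inequality of Assumption~\ref{A:pPI_Lip} by means of the Lipschitz approximations $f_\varepsilon$ introduced in Proposition~\ref{P:Lip_const_estimate}, and then to pass from the Korevaar-Schoen energy to the energy measure by applying the first inequality of~\eqref{E:local_KS_comp_Gamma}. Fix $f\in KS^{1,p}(X)$, $x\in X$, $R>0$, and let $\varepsilon_n\to 0^+$. Since $f_{\varepsilon_n}\in{\rm Lip}_{\rm loc}(X)$, Assumption~\ref{A:pPI_Lip} gives
\[
\int_{B(x,R)}|f_{\varepsilon_n}(y)-(f_{\varepsilon_n})_{B(x,R)}|^p\,d\mu(y) \le C R^p\int_{B(x,\lambda R)}(\mathrm{Lip}f_{\varepsilon_n})^p\,d\mu.
\]

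The next step is to turn the Lipschitz-side integral into something of the form $E_{p,r}(f,\cdot)$ on a slightly larger ball. First I would cover $B(x,\lambda R)$ by the balls $\{B_j^{\varepsilon_n}\}$ intersecting it (with bounded overlap), apply~\eqref{E:Lip_const_estimate} on each such ball, and use volume doubling and the bounded overlap property exactly as in the chain of inequalities leading to~\eqref{E:local_Kumagai_Sturm_cond_01} in the proof of Lemma~\ref{L:local_Kumagai-Sturm_cond}. This yields a constant $C'>0$ (depending only on doubling) and a universal enlargement factor such that
\[
\int_{B(x,\lambda R)}(\mathrm{Lip} f_{\varepsilon_n})^p\,d\mu \le C' E_{p,2\varepsilon_n}\bigl(f,\,B(x,\lambda R+10\varepsilon_n)\bigr).
\]

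Now pick $\Lambda>\lambda$ and $\delta>0$ so that $B(x,\lambda R+\delta)\Subset B(x,\Lambda R)$, and take $n$ large enough that $10\varepsilon_n<\delta$. Monotonicity in the domain gives $E_{p,2\varepsilon_n}(f,B(x,\lambda R+10\varepsilon_n))\le E_{p,2\varepsilon_n}(f,B(x,\lambda R+\delta))$, so
\[
\limsup_{n\to\infty}\int_{B(x,\lambda R)}(\mathrm{Lip} f_{\varepsilon_n})^p\,d\mu \le C'\limsup_{r\to 0^+} E_{p,r}\bigl(f,B(x,\lambda R+\delta)\bigr)\le C''\,\Gamma_p(f)\bigl(B(x,\Lambda R)\bigr),
\]
by the first inequality of~\eqref{E:local_KS_comp_Gamma} applied to $U=B(x,\lambda R+\delta)$ and $V=B(x,\Lambda R)$. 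Finally, since $f_{\varepsilon_n}\to f$ in $L^p(X,\mu)$ by Proposition~\ref{P:Lip_const_estimate}, in particular on $B(x,R)$ the averages $(f_{\varepsilon_n})_{B(x,R)}$ converge to $f_{B(x,R)}$, so the left-hand side of the Poincaré inequality for $f_{\varepsilon_n}$ passes to the limit and yields~\eqref{E:pPI_Gamma_p}.

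The main technical obstacle is bookkeeping the enlargement factors so that the ball on the right of~\eqref{E:pPI_Gamma_p} is still of the form $B(x,\Lambda R)$ with a constant $\Lambda$ independent of $x$, $R$ and $f$. The Poincaré inequality contributes a factor $\lambda$, the Lipschitz-approximation bound~\eqref{E:Lip_const_estimate} together with the $\varepsilon$-neighborhood from the covering contributes an additive term $O(\varepsilon_n)$, and the localized comparison~\eqref{E:local_KS_comp_Gamma} requires a compactly contained nested pair $U\Subset V$; all of these must be absorbed into a single multiplicative constant, which works because $\varepsilon_n$ is taken arbitrarily small relative to $R$. Everything else reduces to $L^p$ continuity of the averaging operator and the already proved properties of $\Gamma_p$.
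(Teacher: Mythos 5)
Your proof is correct, and it reaches the conclusion by a shorter route than the paper. The core estimates are the same ones the paper uses: the Poincar\'e inequality of Assumption~\ref{A:pPI_Lip} applied to the locally Lipschitz approximation $f_\varepsilon$, the pointwise bound~\eqref{E:Lip_const_estimate} summed over a bounded-overlap cover to convert $\int(\mathrm{Lip}f_\varepsilon)^p\,d\mu$ into $E_{p,2\varepsilon}(f,\cdot)$ on an enlarged ball, and the first inequality of~\eqref{E:local_KS_comp_Gamma} to pass from $\limsup_{r\to 0}E_{p,r}(f,\cdot)$ to $\Gamma_p(f)$. The difference is structural: the paper first proves the inequality for $f\in\mathrm{Lip}_{\mathrm{loc}}(X)\cap C_c(X)$ and then extends to all of $KS^{1,p}(X)$ by a density argument, which requires Corollary~\ref{C:Lip_density} together with the Minkowski-type inequality of Proposition~\ref{P:Minkowski} to control $\Gamma_p(f_n)(B(x,\Lambda' R))$ by $\mathcal{E}_p(f_n-f)+\Gamma_p(f)(B(x,\Lambda' R))$. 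You observe, correctly, that Proposition~\ref{P:Lip_const_estimate} holds for arbitrary $f\in L^p(X,\mu)$, so the whole approximation argument can be run directly on a general $f\in KS^{1,p}(X)$; the left-hand side passes to the limit because $g\mapsto\int_{B(x,R)}|g-g_{B(x,R)}|^p\,d\mu$ is continuous on $L^p(B(x,R))$, and the right-hand side is handled by~\eqref{E:local_KS_comp_Gamma}, which only needs $f\in KS^{1,p}(X)$. This makes the two-stage density step of the paper unnecessary, at the cost of nothing; your bookkeeping of the enlargement factors ($\lambda R+10\varepsilon_n$ absorbed into $B(x,\Lambda R)$ once $\varepsilon_n$ is small relative to $R$, with $\overline{B(x,\lambda R+\delta)}$ compact so that the pair is admissible for~\eqref{E:local_KS_comp_Gamma}) is also sound. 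The only point to be aware of is that Assumption~\ref{A:pPI_Lip} is stated for $f\in\mathrm{Lip}(X)$ while $f_{\varepsilon_n}$ is only locally Lipschitz, but the paper applies the inequality in exactly the same way, so this is a shared (and harmless) convention rather than a gap in your argument.
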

The first part of the proof follows similar arguments as~\cite[Theorem 3.4]{ARB23}, whose details we include for completeness. The second part will make use of some of the properties established previously in this section.

\begin{proof}
\begin{enumerate}[wide=0em,label=\emph{Step} \arabic*:,itemsep=.5em]
    \item Let $f\in {\rm Lip}_{\rm loc}(X)\cap C_c(X)$ and $\varepsilon>0$. By virtue of Proposition~\ref{P:Lip_const_estimate}, the function $f_{\eps}$ as defined in~\eqref{E:def_Lip_approx} is locally Lipschitz and 
\begin{equation}\label{E:Lip_const_bound}
    ({\rm Lip}f_{\varepsilon})^p\leq \frac{C}{\varepsilon^p}\fint_{5B_i^\varepsilon}\fint_{B(z,2\varepsilon)}|f(z)-f(y)|^pd\mu(y)\,d\mu(z)
\end{equation}
on each $B_i^\varepsilon$. 
\item  For $R>0$ and $0 <\varepsilon <R$, it follows from~\eqref{E:Lip_const_bound} that
\begin{align}
    \int_{B(x,R)}({\rm Lip}f_{\varepsilon})^pd\mu&\leq \sum_{i, B_i^\varepsilon \cap B(x,R) \neq \emptyset  } \int_{B_i^\varepsilon}({\rm Lip}f_{\varepsilon})^p(x)d\mu(x)\notag\\
    &\leq C \sum_{i, B_i^\varepsilon \cap B(x,R) \neq \emptyset  }\int_{B_i^\varepsilon}\frac{1}{\varepsilon^p}\fint_{5B_i^\varepsilon}\fint_{B(z,2\varepsilon)}|f(z)-f(y)|^pd\mu(y)\,d\mu(z)\,d\mu(x)\notag\\
    &\leq C\sum_{i, B_i^\varepsilon \cap B(x,R) \neq \emptyset  } \int_{5B_i^\varepsilon}\frac{1}{\varepsilon^p}\fint_{B(z,2\varepsilon)}|f(z)-f(y)|^pd\mu(y)\,d\mu(z)\notag\\
    &\leq C\int_{B(x,7R)}\frac{1}{\varepsilon^p}\fint_{B(z,2\varepsilon)}|f(z)-f(y)|^pd\mu(y)\,d\mu(z).\label{E:pPI_Gamma_02}
\end{align}
By virtue of Proposition~\ref{P:Lip_const_estimate}, we have that $f_{\varepsilon/2}$ converges to $f$ in $L^p(X,\mu)$ as $\varepsilon\to 0^+$.
\item
The convexity of the function $x \to x^p$ inequality now implies
\begin{align}
    \int_{B(x,R)}|f(z)-f_{B(x,R)}|^pd\mu(z)&\leq 3^{p-1}\int_{B(x,R)}|f(z)-f_{\varepsilon/2}(z)|^pd\mu(z)\notag\\
    &+3^{p-1}\int_{B(x,R)}|f_{\varepsilon/2}(z)-(f_{\varepsilon/2})_{B(x,R)}|^pd\mu(z)\notag\\
    &+3^{p-1}\int_{B(x,R)}|(f_{\varepsilon/2})_{B(x,R)}-f_{B(x,R)}|^pd\mu(z).\label{E:pPI_Gamma_03}
\end{align}
\item The first term in~\eqref{E:pPI_Gamma_03} is bounded by $\|f-f_{\varepsilon/2}\|_{L^p(X,\mu)}^p$ and the third also using Cauchy-Schwarz inequality because
\begin{align*}
 \int_{B(x,R)}|(f_{\varepsilon/2})_{B(x,R)}-f_{B(x,R)}|^pd\mu(z) 
 &= \mu(B(x,R))\bigg|\fint_{B(x,R)}(f_{\varepsilon/2}(y)-f(y))\,d\mu(y)\bigg|^p
 \leq \|f-f_{\varepsilon/2}\|_{L^p(X,\mu)}^p.
\end{align*}

\item
For the second term in~\eqref{E:pPI_Gamma_03}, since $f_{\varepsilon}\in{\rm Lip}_{\rm loc}(X)\cap C_0(X)$, the $(p,p)$-Poincar\'e inequality~\eqref{E:pPI_Lip} and~\eqref{E:pPI_Gamma_02} imply
\begin{align*}
    \int_{B(x,R)}|f_{\varepsilon/2}(z)-(f_{\varepsilon/2})_{B(x,R)}|^pd\mu(z)
    &\leq CR^p\int_{B(x,\lambda R)}({\rm Lip}f_{\varepsilon/2})^pd\mu\notag\\
    &\leq CR^p\int_{B(x,7\lambda R)}\frac{1}{\varepsilon^p}\int_{B(x,\varepsilon)}|f(z)-f(y)|^pd\mu(y)\,d\mu(z)\\
    &=CR^pE_{p,\varepsilon}(f,B(x,7\lambda R)).
\end{align*}

\item Combining the last two steps with~\eqref{E:pPI_Gamma_03} yields
\begin{equation*}
 \int_{B(x,R)}|f(z)-f_{B(x,R)}|^pd\mu(z)
 \leq C \|f-f_{\varepsilon/2}\|_{L^p(X,\mu)}^p+ CR^pE_{p,\varepsilon}(f,B(x,7\lambda R)).
\end{equation*}
Taking $\limsup_{\varepsilon>0}$ on both sides of the inequality above, it follows from~\eqref{E:local_KS_comp_Gamma} in Theorem~\ref{T:Gamma_measure} that
\begin{equation}\label{E:pPI_Gamma_04}
     \int_{B(x,R)}|f(z)-f_{B(x,R)}|^2d\mu(z)\leq CR^p\int_{B(x,\lambda'R)}d\Gamma_p (f)
\end{equation}
for some $\lambda'>1$ independent of $R$ and $f$.
\item Let now $f\in KS^{1,p}(X)$. In view of Corollary~\ref{C:Lip_density}, there is a sequence $\{f_n\}_{n\geq 0} \subset {\rm Lip}_{\rm loc}(X)\cap C_c(X)$ that converges to $f$ with respect to $(\mathcal{E}_p(\cdot,\cdot)+\|\cdot\|_{L^p(X,\mu)})^{1/p}$. Applying again the basic convexity inequality and~\eqref{E:pPI_Gamma_04} yields
\begin{align}
    \int_{B(x,R)}|f(z)-f_{B(x,R)}|^pd\mu(z)&\leq 3^{p-1}\int_{B(x,R)}|f(z)-f_n(z)|^pd\mu(z)\notag\\
    &+3^{p-1}\int_{B(x,R)}|f_n(z)-(f_n)_{B(x,R)}|^pd\mu(z)\notag\\
    &+3^{p-1}\int_{B(x,R)}|(f_n)_{B(x,R)}-f_{B(x,R)}|^pd\mu(z)\notag\\
    &\leq C \|f-f_n\|_{L^p(X,\mu)}^p
    + CR^p\int_{B(x,\Lambda'R)}d\Gamma_p (f_n)\label{E:pPI_Gamma_05}
\end{align}
with $\Lambda'>1$ possibly different than $\lambda'$. 
\item Note now that Proposition~\ref{P:Gamma_subadditive} implies
\begin{align*}
    \Gamma_p (f_n)(B(x,\Lambda'R))^{1/p}&\leq \Gamma_p(f_n-f)(B(x,\Lambda'R))^{1/p}+\Gamma_p(f)(B(x,\Lambda'R))^{1/p}\\
    &\leq \mathcal{E}_p(f_n-f)^{1/p}+\Gamma_p(f)(B(x,\Lambda'R))^{1/p},
\end{align*}
which combined with~\eqref{E:pPI_Gamma_05} yields
\begin{equation*}
    \int_{B(x,R)}|f(z)-f_{B(x,R)}|^pd\mu(z)\leq C \|f-f_n\|_{L^p(X,\mu)}^p+CR^p\mathcal{E}_p(f_n-f)+CR^p\Gamma_p(f)(B(x,\Lambda'R)).
\end{equation*}
Letting $n\to\infty$ on the right hand side above we finally obtain~\eqref{E:pPI_Gamma_p}.
\end{enumerate}
\end{proof}

\subsection{Absolute continuity for $p>1$}\label{SS:absolute_cont}
To show the absolute continuity of $\Gamma_p(f)$ with respect to the underlying measure $\mu$ when $p>1$ we combine ideas in~\cite{MS23} and~\cite{ARB23}. The proof again will make use of the approximation by Lipschitz functions discussed in Section~\ref{SS:Observations}. We begin by observing that the $p$-energy associated to each $\varphi_i^\varepsilon$ is absolutely continuous with respect to the underlying measure $\mu$.

\medskip

Throughout the section we assume that $p>1$.

\begin{lem}\label{L:abs_cont_partition}
Let $\{B_i^\varepsilon\}_{i\geq 1}$ be an $\varepsilon$-covering of $X$ and $\{\varphi_i^\varepsilon\}_{i\geq 1}$ its associated partition of unity. There exists $C>0$ such that 
\begin{equation}
    \Gamma_p(\varphi_i^\varepsilon)(U)\leq \frac{C^p}{\varepsilon^p}\mu(U)
\end{equation}
for any $U\in\mathcal{O}$.
\end{lem}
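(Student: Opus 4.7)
The proof can be obtained by combining property (i) in the definition of $\overline{\Gamma}$-convergence with the Lipschitz bound on $\varphi_i^\varepsilon$ recalled in Remark~\ref{R:cover_partition}.

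The plan is as follows. First, I would use the constant sequence $f_n := \varphi_i^\varepsilon$, which trivially converges to $\varphi_i^\varepsilon$ in $L^p(X,\mu)$. Since $\Gamma_p$ is by definition the $\overline{\Gamma}$-limit of $\{E_{p,r_n}\}_{n\geq 1}$, part (i) of Definition~\ref{D:def_Gamma_bar} applied with this sequence yields
\[
\Gamma_p(\varphi_i^\varepsilon)(U)\leq \liminf_{n\to\infty} E_{p,r_n}(\varphi_i^\varepsilon,U)
\]
for every $U\in\mathcal{O}$.

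Second, I would bound the right-hand side uniformly in $n$ using the fact that $\varphi_i^\varepsilon$ is $(C/\varepsilon)$-Lipschitz (Remark~\ref{R:cover_partition}). Indeed, for any $x\in U$ and $y\in B(x,r_n)$,
\[
|\varphi_i^\varepsilon(x)-\varphi_i^\varepsilon(y)|^p \leq \Big(\frac{C}{\varepsilon}\Big)^{\!p} d(x,y)^p \leq \Big(\frac{C}{\varepsilon}\Big)^{\!p} r_n^p,
\]
so that
\[
E_{p,r_n}(\varphi_i^\varepsilon,U)=\frac{1}{r_n^p}\int_U\fint_{B(x,r_n)}|\varphi_i^\varepsilon(x)-\varphi_i^\varepsilon(y)|^p\,d\mu(y)\,d\mu(x)\leq \frac{C^p}{\varepsilon^p}\mu(U).
\]
This bound is independent of $n$, hence taking $\liminf_{n\to\infty}$ preserves it, and combining with the first step gives the stated estimate.

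There is no serious obstacle here; the only point to be careful about is to invoke property (i) (not (ii)) of the $\overline{\Gamma}$-convergence, which is what permits the use of the constant recovery sequence and provides the desired upper bound on $\Gamma_p(\varphi_i^\varepsilon)(U)$ by a quantity that can be estimated directly from the Lipschitz constant of the partition function.
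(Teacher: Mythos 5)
Your proposal is correct and follows essentially the same route as the paper: the paper likewise bounds $E_{p,r_n}(\varphi_i^\varepsilon,U)\leq C^p\varepsilon^{-p}\mu(U)$ using the $(C/\varepsilon)$-Lipschitz property of $\varphi_i^\varepsilon$ and then passes to the $\liminf$ via the lower-bound part of the $\overline{\Gamma}$-convergence characterization applied to the constant recovery sequence. No gaps.
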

\begin{proof}
    By virtue of~\cite[p.109]{HKST15}, each function $\varphi_i^\varepsilon$ is $(C/\varepsilon)$-Lipschitz and $\supp \varphi_i^\varepsilon\subset B_i^{2\varepsilon}$. Let now $U\in\mathcal{O}$. 
    \begin{align*}
        E_{p,r_n}(\varphi_i^\varepsilon,U)&=\frac{1}{r_n^p}\int_U\fint_{B(x,r_n)}|\varphi_i^\varepsilon(x)-\varphi_i^\varepsilon(y)|^pd\mu(y)\,d\mu(x)\\
        &\leq \frac{C^p}{\varepsilon^p}\mu(U).
    \end{align*}
    Taking $\liminf_{n\to\infty}$ on both sides of the inequality, and using the characterization of $\overline{\Gamma}$-convergence
    \[
    \Gamma_p(\varphi_i^\varepsilon)(U)\leq \liminf_{n\to\infty}E_{p,r_n}(\varphi_i^\varepsilon,U)\leq \frac{C^p}{\varepsilon^p}\mu(U)
    \]
    as we wanted to prove.
\end{proof}

We now extend absolute continuity to any function in $KS^{1,p}(X)$.

\begin{thm}\label{T:abs_cont_Gamma_p}
For any $f\in KS^{1,p}(X)$, the p-energy measure $\Gamma_p(f)$ is absolutely continuous with respect to the underlying measure $\mu$.
\end{thm}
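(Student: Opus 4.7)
The plan is to establish the pointwise measure bound $\Gamma_p(g)\le C\,|{\rm Lip}\,g|^p\,\mu$ for every locally Lipschitz $g$ with compact support, and then promote it to arbitrary $f\in KS^{1,p}(X)$ via the density of such functions combined with the stability result in Lemma~\ref{L:sequence_abs_cont}. Lemma~\ref{L:abs_cont_partition} already handles the special case of the partition functions $\varphi_i^\varepsilon$; what is needed here is the corresponding estimate at the measure level for a general Lipschitz test function.

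For a fixed $g\in{\rm Lip}_{\rm loc}(X)\cap C_c(X)$, the compactness of the support together with local Lipschitz continuity makes ${\rm Lip}\,g$ bounded. Given any open $V\in\mathcal{O}$, applying Definition~\ref{D:def_Gamma_bar}(i) with the trivial sequence $h_n\equiv g$ yields $\Gamma_p(g)(V)\le\liminf_n E_{p,r_n}(g,V)$, and Proposition~\ref{P:for_local_Kumagai_Sturm} bounds the right-hand side by $C\int_{V_{\Lambda r_n}}|{\rm Lip}\,g|^p\,d\mu$. Since $V_{\Lambda r_n}\downarrow\bar V$ as $n\to\infty$ and ${\rm Lip}\,g\in L^\infty$, dominated convergence gives $\Gamma_p(g)(V)\le C\int_{\bar V}|{\rm Lip}\,g|^p\,d\mu$. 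To sharpen this and replace $\bar V$ by $V$, I would apply the same inequality to the interior exhaustion $V^{(\eta)}:=\{x\in V\colon d(x,V^c)>\eta\}$, whose closure is contained in $V$, and let $\eta\to 0^+$ using the monotone continuity of $\Gamma_p(g)$ from below. This produces
\[
\Gamma_p(g)(V)\le C\int_V|{\rm Lip}\,g|^p\,d\mu.
\]
Outer regularity of the Radon measure $\Gamma_p(g)$, guaranteed by Theorem~\ref{T:Gamma_measure}, then extends the bound to every Borel set $A$, and boundedness of ${\rm Lip}\,g$ gives $\Gamma_p(g)(A)\le C\|{\rm Lip}\,g\|_\infty^p\,\mu(A)$, i.e.\ $\Gamma_p(g)\ll\mu$.

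For an arbitrary $f\in KS^{1,p}(X)$, Corollary~\ref{C:Lip_density} supplies a sequence $\{g_n\}_{n\ge 1}\subset{\rm Lip}_{\rm loc}(X)\cap C_c(X)$ with $\mathcal{E}_p(f-g_n)\to 0$ as $n\to\infty$. Since each $\Gamma_p(g_n)$ is absolutely continuous with respect to $\mu$ by the previous step, Lemma~\ref{L:sequence_abs_cont} yields $\Gamma_p(f)\ll\mu$, as required.

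The main technical obstacle I anticipate is precisely the $\Lambda r$-thickening appearing in the upper estimate of Proposition~\ref{P:for_local_Kumagai_Sturm}: passing directly to the limit in $r_n$ only produces a bound involving $\bar V$ rather than $V$, which is insufficient whenever $\mu(\partial V)>0$. The interior-exhaustion argument circumvents this by trading the problematic boundary for a sequence of open sets whose closures are compactly contained in $V$, exploiting the monotone continuity of Radon measures together with the inner regularity built into $\overline{\Gamma}$-convergence.
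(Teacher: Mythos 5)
Your argument is correct, but it reaches the Lipschitz case by a genuinely different and more direct route than the paper. The paper never invokes Proposition~\ref{P:for_local_Kumagai_Sturm} at the level of the limit measure; instead, for a compactly supported locally Lipschitz $f$ it decomposes the discrete convolution $f_\varepsilon=\sum_i f_{B_i^\varepsilon}\varphi_i^\varepsilon$ ball by ball, combines the absolute continuity of each $\Gamma_p(\varphi_i^\varepsilon)$ (Lemma~\ref{L:abs_cont_partition}) with the Minkowski-type inequality of Proposition~\ref{P:Minkowski} and Lemma~\ref{L:lin_abs_cont} to get $\Gamma_p(f_\varepsilon)\ll\mu$ together with the uniform bound $\mathcal{E}_p(f_\varepsilon)\le C\int_X|\mathrm{Lip}f|^p d\mu$, and then uses reflexivity of $KS^{1,p}$ and Mazur's lemma to produce convex combinations converging to $f$ in energy, closing with Lemma~\ref{L:sequence_abs_cont}. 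You instead apply the liminf inequality of Definition~\ref{D:def_Gamma_bar}(i) with the constant recovery sequence (equivalently, the middle inequality of \eqref{E:local_KS_comp_Gamma}) and Proposition~\ref{P:for_local_Kumagai_Sturm}, and remove the $\Lambda r$-thickening by an interior exhaustion plus continuity from below of the Borel measure $\Gamma_p(g)$; this yields the stronger quantitative domination $\Gamma_p(g)\le C\,|\mathrm{Lip}\,g|^p\mu$ (the $p>1$ analogue of the upper bound in Theorem~\ref{T:BV_case}) while entirely bypassing reflexivity, Mazur's lemma, and the partition-of-unity bookkeeping. Both arguments share the same final reduction from Lipschitz functions to general $f\in KS^{1,p}(X)$ via Corollary~\ref{C:Lip_density} and Lemma~\ref{L:sequence_abs_cont}; in fact you make this last step explicit, whereas the paper's written proof stops at $f\in\mathrm{Lip}_{\rm loc}(X)\cap C_0(X)$ and leaves the density step implicit. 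The only point worth spelling out in your write-up is that $\mathrm{Lip}\,g$ is bounded with compact support for $g\in\mathrm{Lip}_{\rm loc}(X)\cap C_c(X)$ (finite cover of a compact neighborhood of the support), which is what licenses the dominated convergence over the shrinking neighborhoods $V_{\Lambda r_n}$.
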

\begin{proof}
\begin{enumerate}[wide=0em,label=\emph{Step} \arabic*:,itemsep=.5em]
\item Let $f\in {\rm Lip}_{\rm loc}(X)\cap C_0(X)$, $\varepsilon>0$ and consider the corresponding Lipschitz approximation $f_\varepsilon$ from~\eqref{E:def_Lip_approx}. For any $i,j\geq 1$, Cauchy-Schwarz and Proposition~\ref{P:for_local_Kumagai_Sturm} with $U=B_i^\varepsilon$ yield
\begin{align}
    |f_{B_i^\varepsilon}-f_{B_j^\varepsilon}|^p&\leq \bigg(\fint_{B_i^\varepsilon}\fint_{B(x,2\varepsilon)}|f(x)-f(y)|^pd\mu(y)\,d\mu(x)\bigg)^p\notag\\
    &\leq \fint_{B_i^\varepsilon}\fint_{B(x,2\varepsilon)}|f(x)-f(y)|^pd\mu(y)\,d\mu(x)\notag\\
    &\leq\frac{C\varepsilon^p}{\mu(B_i^\varepsilon)}\int_{5\Lambda B_i^\varepsilon}|{\rm Lip }f|^pd\mu.\label{E:abs_cont_Gamma_p_02}
\end{align}
\item The properties of the partition $\{\varphi_i^\varepsilon\}_{i\geq 1}$, c.f.~\cite[p.109]{HKST15}, also imply 
\begin{align*}
    f_\varepsilon(x)&=f_{B_i^\varepsilon}+f_\varepsilon(x)-f_{B_i^\varepsilon}\\
    &=f_{B_i^\varepsilon}+\sum_{j\geq 1}(f_{B_j^\varepsilon}-f_{B_i^\varepsilon})\varphi_j^\varepsilon(x)\\
    &=f_{B_i^\varepsilon}+\sum_{j:B_i^{2\eps}\cap B_j^{2\eps}\ne\emptyset}(f_{B_j^\varepsilon}-f_{B_i^\varepsilon})\varphi_j^\varepsilon(x)
\end{align*}
for any $i\geq 0$ and $x\in B_i^\varepsilon$.
\item Combining Proposition~\ref{P:Minkowski}, Lemma~\ref{L:abs_cont_partition} and~\eqref{E:abs_cont_Gamma_p_02} yields
\begin{align*}
    \Big(\Gamma_p(f_\varepsilon)(B_i^\varepsilon)\Big)^{1/p}&\leq \Big(\Gamma_p(f_{B_i^\varepsilon})(B_i^\varepsilon)\Big)^{1/p}+\sum_{j:B_i^{2\eps}\cap B_j^{2\eps}\ne\emptyset}\Big(\Gamma_p\big((f_{B_j^\varepsilon}-f_{B_i^\varepsilon})\varphi_j^\varepsilon\big)(B_i^\varepsilon)\Big)^{1/p}\\
    &\leq \sum_{j:B_i^{2\eps}\cap B_j^{2\eps}}|f_{B_j^\varepsilon}-f_{B_i^\varepsilon}|\Big(\Gamma_p(\varphi_j^\varepsilon)(B_i^\varepsilon)\Big)^{1/p}\\
    &\leq \sum_{j:B_i^{2\eps}\cap B_j^{2\eps}}\frac{C\varepsilon}{\mu(B_i^\varepsilon)^{1/p}}\bigg(\int_{\Lambda B_i^\varepsilon}|{\rm Lip }f|^pd\mu\bigg)^{1/p}\frac{C}{\varepsilon}\mu(B_i^\varepsilon)^{1/p}\\
    &\leq C\bigg(\int_{\Lambda B_i^\varepsilon}|{\rm Lip }f|^pd\mu\bigg)^{1/p}.
\end{align*}
\item In view of Theorem~\ref{T:Gamma_measure}, for any $\varepsilon>0$
\begin{equation}\label{E:abs_cont_Gamma_p_03}
    \mathcal{E}_p(f_\varepsilon)=\Gamma_p(f_\varepsilon)(X)\leq \sum_{i\geq 1}\Gamma_p(f_\varepsilon)(B_i^\varepsilon)
    \leq C\sum_{i\geq 1}\int_{\Lambda B_i^\varepsilon}|{\rm Lip }f|^pd\mu\leq C\int_X|{\rm Lip }f|^pd\mu,
\end{equation}
whence any sequence $\{f_{\varepsilon_n}\}_{n\geq 1}$ with $\varepsilon_n\to 0$ is uniformly bounded in $\mathcal{E}_p$. Since $\{f_{\varepsilon_n}\}_{n\geq 1}$ converges to $f$ in $L^p(X,\mu)$ due to Proposition~\ref{P:Lip_const_estimate}, it follows from~\eqref{E:abs_cont_Gamma_p_03} that a sequence $\{f_{\varepsilon_n}\}_{n\geq 1}$ with $\varepsilon_n\to 0$ is uniformly bounded in $(KS^{1,p}(X),(\mathcal{E}_p+\|\cdot\|_{L^p(X,\mu)})^{1/p})$. 

By reflexivity of $(KS^{1,p}(X),(\mathcal{E}_p+\|\cdot\|_{L^p(X,\mu)})^{1/p})$ we may thus extract a weakly convergent subsequence still denoted $f_{\varepsilon_n}$, which will again converge to $f$ since $f_{\varepsilon_n}\xrightarrow{n\to\infty} f$ in $L^p(X,\mu)$. From Mazur lemma, a convex combination of the $f_{\varepsilon_n}$, say $g_n$ will converge to $f$ in $(KS^{1,p}(X),(\mathcal{E}_p+\|\cdot\|_{L^p(X,\mu)})^{1/p})$.

\item Finally, because $f_\varepsilon$ is defined as a linear combination, Lemma~\ref{L:lin_abs_cont} and Lemma~\ref{L:abs_cont_partition} imply that $\Gamma_p(g_n)$ is absolutely continuous with respect to $\mu$. The absolute continuity of $\Gamma_p(f)$ now follows from Lemma~\ref{L:sequence_abs_cont}.
\end{enumerate}
\end{proof}

\subsection{Equivalence of $\Gamma_1$ with Miranda's BV measures when  $p=1$}\label{SS:BV measure}

In this section we assume that $p=1$. Our goal will be to compare the measures $\Gamma_1 (f)$ and the BV measures introduced in~\cite{miranda}. These measures were defined as follows, see~\cite[Section 3]{miranda}: For $f \in L^1(X,\mu)$ and $U \in \mathcal O$, let
\[
\| Df \| (U):=\inf_{f_n} \liminf_{n \to +\infty}  \int_U {\rm Lip }f_n d\mu,
\]
where the infimum is taken over the sequences of locally Lipschitz functions $f_n$ such that $f_n \to f$ in $L^1_{loc}(X, \mu)$. It was proved in~\cite[Theorem 3.4]{miranda} that $\| Df \| $ defines a Radon measure on $X$ for every $f \in KS^{1,1}(X)$, and the next theorem shows that it is equivalent to the 1-energy from Theorem~\ref{T:KS_as_Gamma}.

\begin{theorem}\label{T:BV_case}
There exist $C_1,C_2>0$ such that for every $f \in KS^{1,1}(X)$ and $U \in \mathcal O$ with $\bar{U}$ compact
\[
C_1 \| Df \| (U) \le \Gamma_1 (f) (U) \le C_2 \| Df \| (U).
\]
In particular for every $f \in KS^{1,1}(X)$, the measures $\| Df \|$ and $\Gamma_1 (f) $ are equivalent with bounded Radon-Nikodym derivatives.
\end{theorem}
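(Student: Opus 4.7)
The plan is to combine the localization machinery of $\overline{\Gamma}$-convergence with the local Poincar\'e-type estimate of Proposition~\ref{P:for_local_Kumagai_Sturm} and the Lipschitz approximation of Proposition~\ref{P:Lip_const_estimate}, and then close the gap between ``enlarged'' and ``exact'' open sets using the inner regularity of both Radon measures. Specifically, it suffices to prove
\[
\Gamma_1(f)(W) \le C_2 \|Df\|(U) \qquad \text{and} \qquad \|Df\|(W) \le C_1^{-1}\,\Gamma_1(f)(U)
\]
for all pre-compact inclusions $W \Subset U$; taking the supremum over such $W$ and invoking the inner regularity of $\Gamma_1(f)$ (from Theorem~\ref{T:Gamma_measure}) and of $\|Df\|$ (as a Radon measure, by \cite[Thm.~3.4]{miranda}) then yields the stated inequalities on $U$ itself.

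For the upper bound, fix $W \Subset V \Subset U$ and a competitor sequence $\{f_n\}$ of locally Lipschitz functions with $f_n \to f$ in $L^1_{\mathrm{loc}}(X,\mu)$ approximately realizing the infimum in Miranda's definition on $U$. Set $\tilde f_n := f_n$ on $V$ and $\tilde f_n := f$ on $X\setminus V$; since $\bar U$ is compact, $\tilde f_n \to f$ in $L^1(X,\mu)$. The $\liminf$-inequality of $\overline{\Gamma}$-convergence, Definition~\ref{D:def_Gamma_bar}(i), together with Proposition~\ref{P:for_local_Kumagai_Sturm} applied on $W$, gives
\[
\Gamma_1(f)(W) \le \liminf_{n\to\infty} E_{1,r_n}(\tilde f_n, W) \le C \liminf_{n\to\infty} \int_{W_{\Lambda r_n}} \mathrm{Lip}\,\tilde f_n \, d\mu.
\]
For $n$ large enough we have $W_{\Lambda r_n} \Subset V$, so $\tilde f_n$ agrees with $f_n$ in a neighborhood of every point of $W_{\Lambda r_n}$, and therefore $\mathrm{Lip}\,\tilde f_n = \mathrm{Lip}\, f_n$ there. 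Bounding $\int_{W_{\Lambda r_n}} \le \int_U$ and then passing to the infimum over competitor sequences yields $\Gamma_1(f)(W) \le C \|Df\|(U)$.

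For the lower bound, fix $W \Subset V \Subset V' \Subset U$. The $\limsup$-inequality of $\overline{\Gamma}$-convergence, Definition~\ref{D:def_Gamma_bar}(ii), produces a sequence $f_n \to f$ in $L^1(X,\mu)$ with $\limsup_{n} E_{1,r_n}(f_n, V') \le \Gamma_1(f)(U)$. Define $g_n := (f_n)_{\varepsilon_n}$ via \eqref{E:def_Lip_approx} with $\varepsilon_n = r_n/2$; each $g_n$ is locally Lipschitz. A short diagonal argument using Proposition~\ref{P:Lip_const_estimate} (control of $\|(f_n)_{\varepsilon_n} - f_n\|_{L^1}$ by the standard averaged modulus of continuity of $f_n$, plus $\|f_n-f\|_{L^1}\to 0$) shows $g_n \to f$ in $L^1(X,\mu)$, so $\{g_n\}$ is admissible in Miranda's infimum. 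The computation \eqref{E:local_Kumagai_Sturm_cond_01} from the proof of Lemma~\ref{L:local_Kumagai-Sturm_cond} gives, for $n$ large so that $W_{10\varepsilon_n} \subset V'$,
\[
\int_W \mathrm{Lip}\, g_n \, d\mu \le \frac{C}{\varepsilon_n}\int_{W_{10\varepsilon_n}} \fint_{B(z,2\varepsilon_n)}|f_n(z)-f_n(y)|\,d\mu(y)\,d\mu(z) \le C\, E_{1,r_n}(f_n, V').
\]
Taking $\liminf$ on both sides and using the choice of $f_n$ gives $\|Df\|(W) \le \liminf_n \int_W \mathrm{Lip}\,g_n\,d\mu \le C\, \Gamma_1(f)(U)$, as desired. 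The final ``in particular'' statement follows since two Radon measures satisfying $c_1 \mu_1(U) \le \mu_2(U) \le c_2 \mu_1(U)$ on all pre-compact open $U$ are mutually absolutely continuous with Radon-Nikodym derivatives in $[c_1,c_2]$.

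\emph{Main obstacle.} The crucial difficulty is that $\overline{\Gamma}$-convergence only provides inequalities ``up to enlargement'' of the open set: inequalities between $\Gamma_1(f)(U)$ and quantities over $V \Supset U$, rather than on $U$ itself. The fix is to prove the two inequalities in the form $\cdot(W) \le C\, \cdot(U)$ for all $W \Subset U$ and then exploit inner regularity; ensuring that the ``glued'' function $\tilde f_n$ in the upper-bound argument has the right Lipschitz constant inside $W_{\Lambda r_n}$, and that the doubly-indexed Lipschitz approximation $g_n=(f_n)_{\varepsilon_n}$ in the lower bound converges in $L^1(X,\mu)$, are the technical hinges.
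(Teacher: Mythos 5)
Your proposal is correct, and it rests on the same two key estimates as the paper's proof --- Proposition~\ref{P:for_local_Kumagai_Sturm} for the bound $\Gamma_1(f)\lesssim\|Df\|$ and the mollification estimate~\eqref{E:local_Kumagai_Sturm_cond_01} for the converse --- but you wire them together differently. The paper never manufactures sequences: it applies Proposition~\ref{P:for_local_Kumagai_Sturm} to Miranda's competitors and passes to the limit in the \emph{fixed-$r$} functional $E_{1,r}(\cdot,U)$ (which is $L^1$-continuous by Remark~\ref{R:Epr_continuity}), obtaining $E_{1,r}(f,U)\le C\|Df\|(U_{\Lambda r})$; for the other direction it mollifies $f$ itself to get $\|Df\|(U)\le C\liminf_{\varepsilon}E_{1,\varepsilon}(f,V)$. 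Both inequalities are then converted into statements about $\Gamma_1(f)$ by citing the already-established sandwich~\eqref{E:local_KS_comp_Gamma}, and the set-enlargement gap is closed by outer regularity. You instead go back to the raw definition of $\overline{\Gamma}$-convergence, feeding it a glued competitor sequence $\tilde f_n$ for the upper bound and mollifying the recovery sequence $(f_n)_{\varepsilon_n}$ for the lower bound, closing the gap by inner regularity. This is legitimate and essentially unrolls Lemma~\ref{L:local_Kumagai-Sturm_cond}, but it is where your extra care is genuinely needed: the gluing is required because Miranda's competitors converge only in $L^1_{\loc}$ while Definition~\ref{D:def_Gamma_bar}(i) demands global $L^1$ convergence (and you should note that $\tilde f_n$ is locally Lipschitz only on $V$, which suffices since all balls entering the proof of Proposition~\ref{P:for_local_Kumagai_Sturm} for the pair $(W,r_n)$ lie in $V$ for large $n$); and the double-index convergence $\|(f_n)_{\varepsilon_n}-f_n\|_{L^1}\to 0$ needs the triangle-inequality splitting through $f$ together with~\eqref{E:abs_cont_Gamma_p_04}, since the global energy $E_{1,6\varepsilon_n}(f_n)$ of a recovery sequence is not controlled. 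The paper's shortcut through~\eqref{E:local_KS_comp_Gamma} avoids both of these technical hinges at the cost of being less self-contained.
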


\begin{proof}
Let $f \in KS^{1,1}(X)$ and $U \in \mathcal{O}$. It follows from Proposition  \ref{P:for_local_Kumagai_Sturm}
that for any $r>0$ and any locally Lipschitz function $f_n$,
    \begin{equation*}
     \frac{1}{r}\int_U\fint_{B(x,r)}|f_n(x)-f_n(y)|d\mu(y)\,d\mu(x)\leq C\int_{U_{\Lambda r}}|{\rm Lip }f_n|d\mu.
     \end{equation*}
Thus,
     \begin{equation*}
     \frac{1}{r}\int_U\fint_{B(x,r)}|f(x)-f(y)|d\mu(y)\,d\mu(x)\leq C \inf_{f_n} \liminf_{n \to +\infty} \int_{U_{\Lambda r}}|{\rm Lip }f_n|d\mu,
     \end{equation*}
     where the infimum is taken over the sequences of locally Lipschitz functions $f_n$ such that $f_n \to f$ in $L^1_{loc}(X, \mu)$. Therefore, for any $r>0$,
     \[
     E_{1,r}(f,U) \le C \| Df \| (U_{\Lambda r}).
     \]
     This yields that for every $U,V \in \mathcal O$ with $U \Subset V$,
     \[
     \limsup_{r \to 0} E_{1,r}(f,U) \le C \| Df \| (V)
     \]
and we deduce from~\eqref{E:local_KS_comp_Gamma} that for every $U,V \in \mathcal O$ with $U \Subset V$,
     \[
     \Gamma_1 (f) (U) \le C  \| Df \| (V).
     \]
The outer regularity of $\| Df \|$ implies that for every $U \in \mathcal O$ such that $\bar U$ is compact
      \[
     \Gamma_1 (f) (U) \le C  \| Df \| (U).
     \]
To prove the converse estimate, let $f \in KS^{1,1}(X)$ and $U \in \mathcal O$. As in \eqref{E:local_Kumagai_Sturm_cond_01} one gets
     \[
  \int_U {\rm Lip }f_\varepsilon d\mu     \leq \frac{C}{\varepsilon}\int_{U_{5\varepsilon}}\fint_{B(z,2\varepsilon)}|f(z)-f(y)|^pd\mu(y)\,d\mu(z).
     \]
     Since $f_\varepsilon \to f$ in $L^1$, it follows that for $U,V \in \mathcal O$ with $U \Subset V$
     \[
  \| Df \| (U) =    \inf_{f_n} \liminf_{n \to +\infty} \int_{U}|{\rm Lip }f_n|d\mu \le C \liminf_{\varepsilon \to 0} E_{1,\varepsilon}(f,V).
     \]
By virtue of~\eqref{E:local_KS_comp_Gamma} we obtain for every $U,V \in \mathcal O$ with $U \Subset V$
     \[
   \| Df \| (U) \le C  \Gamma_1 (f) (V).
     \]
Finally, the outer regularity of $\Gamma_1 (f)$ yields 
      \[
      \| Df \| (U) \le C  \Gamma_1 (f) (U)
     \]
for every $U \in \mathcal O$ such that $\bar U$ is compact.
\end{proof}

\section{Mosco convergence}\label{Mosco section}
While the concept of Mosco convergence was originally introduced by Mosco in the context of Dirichlet forms~\cite{Mos94}, it readily extends to more general functionals as described in Section 5 of \cite{ambrosio}. 

\begin{defn}[Mosco convergence]\label{D:Mosco_convergence}
A sequence of functionals $\{E_n\colon L^p(X,\mu)\to [-\infty,\infty]\}_{n\geq 1}$ is said to Mosco-converge to $E\colon L^p(X,\mu)\to [-\infty,\infty]$ if and only if
\begin{enumerate}[label=(\roman*)]
\item For every $f \in L^p(X,\mu)$ and every sequence $f_n$ that converges to $f$ \emph{weakly} in $L^p(X,\mu)$ it holds that
\[
E(f) \le \liminf_{n \to +\infty} E_n (f_n).
\]
\item For every $f \in L^p(X,\mu)$ there exists a sequence $f_n$ converging to $f$ strongly in $L^p(X,\mu)$ such that
\[
\limsup_{n \to +\infty} E_n (f_n)\le E(f).
\]
\end{enumerate}
\end{defn}
By definition, $\Gamma$-convergence is weaker than Mosco convergence. However we will show in Section~\ref{SS:Mosco_convergence} that both convergences are equivalent when the sequence of forms is \emph{asymptotically compact}.
More precisely, we prove in Theorem~\ref{mosco convergence} that the sequence of Korevaar-Schoen energies $\{E_{p,r_n}\colon KS^{1,p}(X)\to\mathbb{R}\}_{n\geq 1}$ from~\eqref{E:def_Epr} Mosco converges to $\mathcal{E}_p\colon KS^{1,p}(X)\to\mathbb{R}$ when the underlying space is compact. The latter assumption on the space will thus hold throughout the section.

\begin{assump}
    The underlying space $(X,d,\mu)$ is compact.
\end{assump}

\subsection{Rellich-Kondrachov}
In the present context, a sequence of forms $\{E_{p,r_n}\}_{n\geq 0}$ is said to be asymptotically compact if for a sequence of positive numbers $\{\eps_n\}_{n\geq 0}$ with $\lim_{n\to\infty}\eps_n=0$, any sequence $\{f_n\}_{n\geq 1}\subset L^p(X,\mu)$ with 
\begin{equation*}
\liminf_{n\to\infty} (E_{p,\varepsilon_n}(f_n)+\|f_n\|_{L^p(X,\mu)}^p)<\infty
\end{equation*}
has a subsequence that converges strongly in $L^p(X,\mu)$. As pointed out in Remark~\ref{R:pPI_Lip_vs_ug}, the $(p,p)$-Poincar\'e inequality~\eqref{E:pPI_Lip} is equivalent to that same equality with upper gradients $\{g_n\}_{n\geq 1}$ on the right hand side. Under this assumption and since $X$ is compact, we know from~\cite[Theorem 8.1]{HK00} that there exists $k>1$ such that any sequence $\{f_n\}_{n\geq 1}$ in $KS^{1,p}(X)$ with
\begin{equation}\label{E:Koskela-Hajlasz-cond}
\sup_{n\geq 1}\big(\|f_n\|_{L^1(X,\mu)}+\|g_n\|_{L^p(X,\mu)}\big)<\infty    
\end{equation}
contains a subsequence that converges in $L^\alpha(X,\mu)$ for any $1\leq \alpha<kp$. This observation will lead as in~\cite[Lemma 3.8]{ARB23} to asymptotic compactness.

\begin{thm}\label{T:Rellich_Kondrachov}
Let $\{\eps_n\}_{n\geq 0}$ with $\lim\limits_{n\to\infty}\eps_n=0$. Any sequence $\{f_n\}_{n\geq 1}$ in $KS^{1,p}(X)$ such that
\begin{equation}\label{E:Rellich_Kondrachov_cond}
\liminf_{n\to\infty} \Big(E_{p,\varepsilon_n}(f_n)+\|f_n\|_{L^p(X,\mu)}^p\Big)<\infty
\end{equation}
contains a subsequence that converges strongly in $L^p(X,\mu)$.
\end{thm}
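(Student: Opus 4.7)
The plan is to reduce strong $L^p$-compactness of $\{f_n\}$ to the Haj\l asz--Koskela Rellich--Kondrachov theorem~\cite[Theorem 8.1]{HK00} recalled above in the form of condition~\eqref{E:Koskela-Hajlasz-cond}, by applying it to a Lipschitz regularisation of the $f_n$. First I pass to a subsequence (still denoted $\{f_n\}$) along which $E_{p,\varepsilon_n}(f_n)+\|f_n\|_{L^p(X,\mu)}^p\le M$ for every $n$.

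I then consider the Lipschitz approximations $\tilde f_n:=(f_n)_{\varepsilon_n/2}$ defined in~\eqref{E:def_Lip_approx}. Combining the Lipschitz-constant estimate of Proposition~\ref{P:Lip_const_estimate} with the finite-overlap bookkeeping already carried out in the proof of Lemma~\ref{L:local_Kumagai-Sturm_cond}, cf.~\eqref{E:local_Kumagai_Sturm_cond_01}, gives
\[
\int_X|{\rm Lip}\tilde f_n|^p\,d\mu \;\le\; C\,E_{p,\varepsilon_n}(f_n)\;\le\;CM,
\]
while expanding $\tilde f_n=\sum_i (f_n)_{B_i^{\varepsilon_n/2}}\varphi_i^{\varepsilon_n/2}$ and applying Jensen, the finite overlap of the covering, and volume doubling yields $\|\tilde f_n\|_{L^p(X,\mu)}\le C\|f_n\|_{L^p(X,\mu)}\le CM^{1/p}$; compactness of $X$ together with H\"older's inequality promote this to a uniform $L^1$-bound. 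Since $\tilde f_n$ is locally Lipschitz, ${\rm Lip}\tilde f_n$ serves as an upper gradient, so $\{\tilde f_n\}$ satisfies the hypothesis~\eqref{E:Koskela-Hajlasz-cond} uniformly and~\cite[Theorem 8.1]{HK00} produces a subsequence $\tilde f_{n_k}\to g$ strongly in $L^p(X,\mu)$.

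It then remains to show $\|\tilde f_n-f_n\|_{L^p(X,\mu)}\to 0$, since this forces $f_{n_k}\to g$ in $L^p$. The pointwise bound from the proof of Proposition~\ref{P:Lip_const_estimate} and Jensen's inequality deliver
\[
\|\tilde f_n-f_n\|_{L^p(X,\mu)}^p\;\le\;C\,(3\varepsilon_n)^p\,E_{p,3\varepsilon_n}(f_n),
\]
so the matter reduces to the scale-comparison estimate $E_{p,3\varepsilon_n}(f_n)\le C\,E_{p,\varepsilon_n}(f_n)$. This will be the main technical step; I propose to establish it via a chaining argument that exploits the fact that doubling together with the $(p,p)$-Poincar\'e inequality force $X$ to be quasi-convex: for any $x,y$ with $d(x,y)<3\varepsilon_n$ one bounds $|f_n(x)-f_n(y)|^p$ by three $\varepsilon_n$-scale increments of $f_n$ along a chain whose intermediate points may be averaged over suitable balls of radius $\varepsilon_n$, and after Fubini, doubling, and the symmetry $x\leftrightarrow y$ the right-hand side collapses to a multiple of $\varepsilon_n^p E_{p,\varepsilon_n}(f_n)$. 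Inserted back this gives $\|\tilde f_n-f_n\|_{L^p(X,\mu)}^p\le C'M\varepsilon_n^p\to 0$, concluding the proof in the spirit of the semigroup-based approach of~\cite[Lemma 3.8]{ARB23} for $p=2$.
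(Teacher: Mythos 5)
Your overall architecture is exactly the paper's: pass to a subsequence realizing the liminf, form the Lipschitz approximations $(f_n)_{\varepsilon_n/2}$, control $\int_X|{\rm Lip}\,(f_n)_{\varepsilon_n/2}|^p\,d\mu$ by $C\,E_{p,\varepsilon_n}(f_n)$ via Proposition~\ref{P:Lip_const_estimate} and the bookkeeping of~\eqref{E:local_Kumagai_Sturm_cond_01}, invoke~\cite[Theorem 8.1]{HK00} through condition~\eqref{E:Koskela-Hajlasz-cond}, and then transfer the convergence back to $f_n$ by showing the mollification error vanishes. All of those steps are sound, and your uniform $L^1$/$L^p$ bounds on the approximants are fine.

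The one place where your write-up is not a proof is the step you yourself single out as the main technical point: the scale comparison $E_{p,3\varepsilon_n}(f_n)\le C\,E_{p,\varepsilon_n}(f_n)$. The error estimate $\|(f_n)_{\varepsilon_n/2}-f_n\|_{L^p}^p\le C\varepsilon_n^p E_{p,3\varepsilon_n}(f_n)$ genuinely involves the energy at a strictly larger scale than the one controlled by hypothesis~\eqref{E:Rellich_Kondrachov_cond}, and such a comparison fails for general doubling spaces (two isolated points at distance between $\varepsilon_n$ and $3\varepsilon_n$ give a counterexample), so some use of connectivity is unavoidable. Your proposed route --- quasiconvexity of PI spaces plus a chain of boundedly many $\varepsilon_n$-scale steps with intermediate values replaced by ball averages, then Fubini and doubling --- is the standard and correct way to prove it, but as written it is a sketch rather than an argument: the averaging of the chain endpoints, the uniform bound on the number of links, and the final Fubini collapse all need to be carried out. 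It is worth noting that the paper's own proof is no more explicit here: it bounds the error by $C\varepsilon_{n_k}^p\sup_k E_{p,6\varepsilon_{n_k}}(f_{n_k})$ and tacitly treats that supremum as finite, which is exactly the comparison you isolate. So your proposal is faithful to the paper's strategy and arguably more honest about where the real work lies, but to be complete it must either execute the chaining lemma in full or cite a reference (e.g.\ the discrete characterizations of $N^{1,p}$ in the Haj\l asz--Koskela circle of ideas, or the property $\mathcal{P}(p,1)$ of~\cite{Bau22}) where the two-scale comparison of Korevaar--Schoen energies on PI spaces is established.
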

\begin{proof}
    First, in view of~\eqref{E:Rellich_Kondrachov_cond} we may extract a subsequence $\{f_{n_k}\}_{k\geq 1}$ such that
    \[
    \sup_{k \ge 1} \Big( E_{p,\varepsilon_{n_k}}(f_{n_k})+ \|f_{n_k}\|_{L^p(X,\mu)}^p\Big) <+\infty.
    \]
    Second, consider the Lipschitz approximating sequence $\{f_{n_k,\varepsilon_{n_k}/2}\}_{n\geq 1}$ defined as in~\eqref{E:def_Lip_approx}. By construction, $f_{n_k,\varepsilon_{n_k}/2}$ is locally Lipschitz and thus~\cite[Lemma 6.2.6]{HKST15} implies $g_{n_k,\varepsilon_{n_k}/2}={\rm Lip}f_{n_k,\varepsilon_{n_k}/2}$ is an upper gradient of $f_{n_k,\varepsilon_{n_k}/2}$. It now follows from~\eqref{E:local_Kumagai_Sturm_cond_01} and Lemma~\ref{L:Kumagai-Sturm_condition} that
\begin{align}
    \|g_{n_k,\varepsilon_{n_k}/2}\|_{L^p(X,\mu)}^p&=\|{\rm Lip}f_{n_k,\varepsilon_{n_k}/2}\|_{L^p(X,\mu)}^p=\int_X|{\rm Lip}f_{n_k,\varepsilon_{n_k}/2}|^pd\mu(y)\notag\\
    &\leq \frac{C}{\varepsilon_{n_k}^p}\int_X\fint_{B(x,\varepsilon_{n_k})}|f_{n_k}(y)-f_{n_k}(x)|^pd\mu(y)\,d\mu(x)\notag\\
    &=CE_{p,\varepsilon_{n_k}}(f_{n_k})\leq C\liminf_{n\to\infty}E_{p,\varepsilon_n}(f_n).\label{E:Rellich_Kondrachov_01}
\end{align}
    In addition, the sequence $\{f_{n_k}\}_{k\geq 1}$ is bounded in $L^1(X,\mu)$ because $X$ is compact, and so is $f_{n_k,\varepsilon_{n_k}/2}$ due to its definition, c.f.~\eqref{E:def_Lip_approx}. Together with~\eqref{E:Rellich_Kondrachov_01}, it now follows from~\eqref{E:Rellich_Kondrachov_cond} that
    \[
    \sup_{k\geq 1}\big(\|f_{n_k,\varepsilon_{n_k}/2}\|_{L^1(X,\mu)}+\|g_{n_k,\varepsilon_{n_k}/2}\|_{L^p(X,\mu)}\big)<\infty.
    \]
    By virtue of~\cite[Theorem 8.1]{HK00} it is possible to extract yet another subsequence, which we still denote for simplicity $\{f_{n_k,\varepsilon_{n_k}}\}_{k\geq 1}$, that converges in $L^p(X,\mu)$ to some function $f\in L^{p}(X,\mu)$.\\
    Finally, we show that $\{f_{n_k}\}_{k\geq 1}$ also converges to $f$. Writing
    \begin{equation}\label{E:Rellich_Kondrachov_02}
        \|f_{n_k}-f\|_{L^p(X,\mu)}\leq \|f_{n_k}-f_{n_k,\varepsilon_{n_k}}\|_{L^p(X,\mu)}+\|f_{n_k,\varepsilon_{n_k}}-f\|_{L^p(X,\mu)}
    \end{equation}
    it only remains to prove that the first term above vanishes as $k\to\infty$. In the same manner as~\eqref{E:abs_cont_Gamma_p_03},
    \begin{align*}
       \|f_{n_k}-f_{n_k,\varepsilon_{n_k}}\|_{L^p(X,\mu)}&\leq C\int_X\bigg(\fint_{B(x,6\varepsilon_{n_k})}|f_{n_k}(x)-f_{n_k}(y)|\,d\mu(y)\bigg)^pd\mu(x) \\
        & \le C \varepsilon_{n_k}^p  \sup_{k \ge 1}  E_{p,\varepsilon_{6n_k}}(f_{n_k}) \to_{k \to +\infty} 0
    \end{align*}   
       \end{proof}

\subsection{Mosco convergence}\label{SS:Mosco_convergence}
As in the original argument by Mosco for Dirichlet forms, c.f.~\cite[Lemma 2.3]{Mos94}, we observe that $\Gamma$-convergence and Mosco convergence are equivalent for asymptotically compact sequences.
\begin{lem}\label{L:Mosco_iff_Gamma}
Let $\{E_n\colon L^p(X,\mu)\to\mathbb{R} \}_{n\geq 1}$ be a sequence of functionals that is asymptotically compact. Then the sequence $\Gamma$-converges to a functional $E\colon L^p(X,\mu)\to\mathbb{R}$ if and only if it does in the Mosco sense.
\end{lem}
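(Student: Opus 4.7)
The direction Mosco $\Rightarrow$ $\Gamma$ is immediate from unwinding Definitions~\ref{D:Gamma_limit} and~\ref{D:Mosco_convergence}: strong $L^p$ convergence implies weak $L^p$ convergence, so condition (i) of Mosco is a stronger requirement than condition (i) of $\Gamma$-convergence, while the limsup conditions (ii) coincide verbatim. The entire content of the lemma is therefore in the reverse implication.

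For the converse, I assume $\{E_n\}$ is $\Gamma$-convergent to $E$ and asymptotically compact. Since the limsup conditions in the two definitions agree, only the \emph{weak} liminf inequality needs verification: given $f_n \rightharpoonup f$ weakly in $L^p(X,\mu)$, show that $E(f) \le \liminf_{n\to\infty} E_n(f_n)$. If the right-hand side equals $+\infty$ there is nothing to prove, so I assume it is finite and select a subsequence $\{n_k\}_{k\ge 1}$ along which
\[
\lim_{k\to\infty} E_{n_k}(f_{n_k}) = \liminf_{n\to\infty} E_n(f_n) < +\infty.
\]

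A weakly convergent sequence in $L^p(X,\mu)$ is norm-bounded, hence $\sup_{k\ge 1}\|f_{n_k}\|_{L^p(X,\mu)}^p < +\infty$. Combined with the boundedness of $E_{n_k}(f_{n_k})$, this gives
\[
\liminf_{k\to\infty}\bigl(E_{n_k}(f_{n_k}) + \|f_{n_k}\|_{L^p(X,\mu)}^p\bigr) < +\infty,
\]
so the asymptotic compactness hypothesis applies and yields a further subsequence $\{f_{n_{k_j}}\}_{j\ge 1}$ that converges \emph{strongly} in $L^p(X,\mu)$ to some $g$. Since subsequences of weakly convergent sequences retain the same weak limit, and strong convergence implies weak convergence, the uniqueness of weak limits forces $g=f$. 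Then $\Gamma$-convergence applied to the strongly convergent subsequence $f_{n_{k_j}} \to f$ gives
\[
E(f) \le \liminf_{j\to\infty} E_{n_{k_j}}(f_{n_{k_j}}) = \lim_{k\to\infty} E_{n_k}(f_{n_k}) = \liminf_{n\to\infty} E_n(f_n),
\]
which is exactly the Mosco liminf inequality.

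The argument is essentially an exercise in subsequence extraction, and I do not anticipate a genuine obstacle: the only substantive ingredient is that asymptotic compactness upgrades weak $L^p$ boundedness (automatic along a sequence realizing the liminf) into strong $L^p$ precompactness, after which the $\Gamma$-liminf inequality transfers the bound to the weak limit.
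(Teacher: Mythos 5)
Your proposal is correct and follows essentially the same route as the paper: both reduce the converse direction to the weak liminf inequality, use norm-boundedness of weakly convergent sequences together with finiteness of the energy liminf to invoke asymptotic compactness, identify the strong limit of the extracted subsequence with $f$ by uniqueness of weak limits, and conclude via the $\Gamma$-liminf inequality along that subsequence. The only difference is presentational: the paper phrases the argument as a proof by contradiction, whereas your direct version (which also handles the case $\liminf_n E_n(f_n)=+\infty$ explicitly) is if anything cleaner.
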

\begin{proof}
    By definition, Mosco convergence implies $\Gamma$-convergence, hence it only remains to prove the converse. In particular, it suffices to show that any sequence $\{f_n\}_{n\geq 1}$ that converges strongly to some $f\in L^p(X,\mu)$ satisfies 
    \begin{equation}\label{E:Mosco_cond_a}
        E(f)\leq \liminf_{n\to\infty} E_n(f_n).
    \end{equation}
    Assume to the contrary that there is a strongly convergent sequence $\{f_n\}_{n\geq 1}$ for which
    \begin{equation}\label{E:Mosco_cond_a_contradict}
        E(f)\geq \liminf_{n\to\infty} E_n(f_n).
    \end{equation}
    Possibly extracting a subsequence, we find $\{f_{n_k}\}_{k\geq 1}$ with the property that 
    \[
    \liminf_{k\to\infty}\Big(E_{n_k}(f_{n_k})+\|f_{n_k}\|_{L^p(X,\mu)}\Big)<\infty.
    \]
    Because the sequence $\{E_{n_k}\}_{k\geq 1}$ is asymptotically compact, by definition it contains yet another subsequence that converges strongly in $L^p(X,\mu))$ to some $\tilde{f}\in L^p(X,\mu)$. For simplicity we denote that subsequence again by $\{f_{n_k}\}_{n\geq 1}$ and observe that it converges weakly to $f$ by assumption, whence $f=\tilde{f}$. But then~\eqref{E:Mosco_cond_a_contradict} implies
    \[
    E(f)\geq \liminf_{k\to\infty} E_{n_k}(f_{n_k})
    \]
    which contradicts the $\Gamma$-convergence of $\{E_{n_k}\}_{k\geq 1}$.
\end{proof}
The latter lemma now implies that the $p$-energy $(\mathcal{E}_p,KS^{1,p}(X))$ is in fact a Mosco limit.
\begin{thm}\label{mosco convergence}
If the underlying space $X$ is compact, the sequence of functionals $\{E_{p,r_n}\}_{n\geq 1}$ Mosco converges to the $p$-energy $(\mathcal{E}_p,KS^{1,p}(X))$.
\end{thm}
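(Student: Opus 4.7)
The proof reduces to assembling three ingredients already established in the paper: the $\Gamma$-convergence from Theorem~\ref{T:KS_as_Gamma}, the Rellich-Kondrachov type compactness from Theorem~\ref{T:Rellich_Kondrachov}, and the equivalence recorded in Lemma~\ref{L:Mosco_iff_Gamma}. The plan is therefore essentially to verify the hypotheses of Lemma~\ref{L:Mosco_iff_Gamma} for the family $\{E_{p,r_n}\}_{n\geq 1}$.

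First, from Theorem~\ref{T:KS_as_Gamma} we know that $E_{p,r_n}$ $\Gamma$-converges to $\mathcal{E}_p$ (extended by $+\infty$ outside $KS^{1,p}(X)$) in $L^p(X,\mu)$. Second, Theorem~\ref{T:Rellich_Kondrachov} says exactly that $\{E_{p,r_n}\}_{n\ge 1}$ is asymptotically compact in the sense used in Section~\ref{SS:Mosco_convergence}: any sequence $\{f_n\}_{n\ge 1}\subset L^p(X,\mu)$ satisfying
\[
\liminf_{n\to\infty}\bigl(E_{p,r_n}(f_n)+\|f_n\|_{L^p(X,\mu)}^p\bigr)<\infty
\]
admits a subsequence that converges strongly in $L^p(X,\mu)$. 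Here the compactness of $X$ is used to control $\|f_n\|_{L^1(X,\mu)}$ by $\|f_n\|_{L^p(X,\mu)}$, which is what allows the application of~\cite[Theorem 8.1]{HK00} through the Lipschitz approximations $f_{n,\varepsilon_n/2}$ of Proposition~\ref{P:Lip_const_estimate}.

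With these two facts in hand, Lemma~\ref{L:Mosco_iff_Gamma} applies directly and upgrades $\Gamma$-convergence to Mosco convergence. More concretely, the $\limsup$-condition of Definition~\ref{D:Mosco_convergence} is identical to the one in Definition~\ref{D:Gamma_limit} and is handed over from Theorem~\ref{T:KS_as_Gamma}. For the $\liminf$-condition, given $f\in L^p(X,\mu)$ and $\{f_n\}_{n\ge 1}$ converging to $f$ only weakly in $L^p(X,\mu)$, we may assume $\liminf_n E_{p,r_n}(f_n)<\infty$ (else there is nothing to prove) and, after passing to a subsequence realizing the $\liminf$, observe that $\|f_n\|_{L^p(X,\mu)}$ is bounded by weak convergence. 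Asymptotic compactness then provides a further subsequence converging strongly to some $\tilde f\in L^p(X,\mu)$; weak convergence forces $\tilde f=f$, and we conclude $\mathcal{E}_p(f)\le \liminf_n E_{p,r_n}(f_n)$ from the $\Gamma$-liminf inequality.

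There is essentially no obstacle: the substantive analytic work is already contained in Theorem~\ref{T:Rellich_Kondrachov} (where compactness of $X$ enters through the $L^1$-bound) and in the $\Gamma$-convergence Theorem~\ref{T:KS_as_Gamma}. The present theorem is a clean corollary of the abstract equivalence of Lemma~\ref{L:Mosco_iff_Gamma}, and the only point requiring minor care is the routine extraction-of-subsequence argument showing that a weak limit is forced to coincide with the strong subsequential limit.
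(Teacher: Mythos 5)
Your proposal is correct and follows exactly the paper's route: the paper's proof of this theorem is precisely the one-line assembly of Theorem~\ref{T:KS_as_Gamma} ($\Gamma$-convergence), Theorem~\ref{T:Rellich_Kondrachov} (asymptotic compactness), and Lemma~\ref{L:Mosco_iff_Gamma} (equivalence of $\Gamma$- and Mosco convergence under asymptotic compactness). The extra detail you supply about extracting a strongly convergent subsequence and identifying its limit with the weak limit is just an unwinding of the proof of Lemma~\ref{L:Mosco_iff_Gamma}, so nothing is different in substance.
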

\begin{proof}
The claim follows from Theorem~\ref{T:KS_as_Gamma}, Theorem~\ref{T:Rellich_Kondrachov} and  Lemma~\ref{L:Mosco_iff_Gamma}.
\end{proof}

\bibliographystyle{amsplain}
\bibliography{p-energy-Cheeger_refs}

\noindent
\textbf{Fabrice Baudoin}: \url{fbaudoin@math.au.dk}\\
Department of Mathematics,
Aarhus University,
Denmark

\medskip

\noindent
\textbf{Patricia Alonso Ruiz}: \url{paruiz@tamu.edu}\\
Department of Mathematics
Texas A\&M University
USA
\end{document}